\newcommand{\norm}[1]{\left\| #1 \right\|}  
\newcommand{\scprd}[1]{\left\langle #1 \right\rangle}  
\renewcommand{\d}{\,\mathrm{d}} 
\newcommand{\e}{\mathrm{e}} 
\newcommand{\imunit}{\mathrm{i}} 
\newcommand{\ddt}[1]{\frac{\mathrm{d}} {\mathrm{d}t} #1}   
\newcommand{\N}{\mathbb{N}}  
\newcommand{\R}{\mathbb{R}}
\newcommand{\C}{\mathbb{C}}
\newcommand{\ran}{\operatorname{ran}}
\newcommand{\rk}{\operatorname{rk}}
\newcommand{\diag}{\operatorname{diag}}
\newcommand{\supp}{\operatorname{supp}}
\newcommand{\dist}{\operatorname{dist}}
\newcommand{\dom}{\operatorname{dom}}
\DeclareMathOperator*{\argmax}{argmax}
\DeclareMathOperator*{\argmin}{argmin}
\newcommand{\conv}{\operatorname{conv}}
\newcommand{\eps}{\varepsilon}
\renewcommand{\phi}{\varphi}
\newcommand{\ul}{\underline}
\newcommand{\ol}{\overline}
\newcommand{\tr}{\operatorname{tr}}
\newcommand{\diam}{\operatorname{diam}}
\newcommand{\Cov}{\operatorname{Cov}}
\newcommand{\dimx}{d_\mathrm{x}}
\newcommand{\dimy}{d_\mathrm{y}}
\newcommand{\dimp}{d_\mathrm{\theta}}
\newcommand{\Rpsd}{\R_{\mathrm{psd}}}
\newcommand{\Rpd}{\R_{\mathrm{pd}}}
\newcommand{\lse}{\widehat{\theta}}
\newcommand{\lin}{\mathrm{lin}}
\newcommand{\X}{\mathcal{X}}
\newcommand{\Y}{\mathcal{Y}}
\newcommand{\Xifin}{\Xi_{\mathrm{fin}}}
\newcommand{\pdirwidth}{\operatorname{pdirwidth}}
\newcommand{\pwidth}{\operatorname{pwidth}}
\newcommand{\curvconst}{C_{\Psi,\mathcal{M}}}
\newcommand{\convexconst}{\mu_{\Psi,\mathcal{M}}}
\newcommand{\faces}{\operatorname{faces}}
\newcommand{\fact}{\mathrm{fact}}
\newcommand{\rand}{\mathrm{rand}}
\numberwithin{equation}{section}
\newtheorem{thm}{Theorem}[section]
\newtheorem{cor}[thm]{Corollary}
\newtheorem{prop}[thm]{Proposition}
\newtheorem{lm}[thm]{Lemma}
\newtheorem{cond}[thm]{Condition}
\theoremstyle{definition} \newtheorem{algo}[thm]{Algorithm}
\theoremstyle{definition} \newtheorem{ex}[thm]{Example}
\theoremstyle{definition}
\title{Adaptive discretization algorithms for locally optimal experimental design}
\author{Jochen Schmid, Philipp Seufert, Michael Bortz
\\\small Fraunhofer Institute for Industrial Mathematics (ITWM), 67663 Kaiserslautern, Germany\\
\small jochen.schmid@itwm.fraunhofer.de}  
\date{}
\begin{document}

\maketitle

\begin{abstract}
\small{ \noindent 
We develop adaptive discretization algorithms for locally optimal experimental design of nonlinear prediction models. 
With these algorithms, we refine and improve a pertinent state-of-the-art algorithm 
in various respects. 
We establish novel termination, convergence, and convergence rate results for the proposed algorithms. In particular, we prove a sublinear convergence rate result under very general assumptions on the design criterion and, most notably, a linear convergence result under the additional assumption that the design criterion is strongly convex and the design space is finite. 
Additionally, we prove the finite termination at approximately optimal designs, 
including upper bounds on the number of iterations until termination. And finally, we illustrate the practical use of the proposed algorithms by means of two application examples from chemical engineering: one with a stationary model and one with a dynamic model. 
}
\end{abstract}


\section{Introduction}

In this paper, we are concerned with locally optimal experimental design for 
nonlinear prediction models. 
Such models are parametric models $f: \X \times \Theta \to \Y$ describing a functional input-output relationship of interest, that is, for every model parameter value $\theta$, the model 
\begin{align}
\X \ni x \mapsto f(x,\theta) \in \Y
\end{align}
predicts the considered output quantities $y \in \Y = \R^{\dimy}$ as a function of the considered input quantities $x \in \X \subset \R^{\dimx}$. 
\smallskip

In a nutshell, locally optimal experimental design~\cite{Ch53, FeLe, PrPa} is about finding maximally informative experimental designs $\xi$ for $f$ locally around a given reference value $\ol{\theta}$ of the model parameters. 
As usual, an (approximate) experimental design $\xi$ \cite{FeLe, PrPa} is a -- discrete or non-discrete -- probability measure on a suitable subset of the input space, the so-called design space $X \subset \X$. In essence, an experimental design $\xi$ indicates at which design points $x \in X$ experiments should be performed and how much weight each of these experiments $x$ should be given (which, practically speaking, 
boils down to how often the individual experiments $x$ should be repeated~\cite{Fe, FeLe, PuRi92}). 
In practice, one is only interested in discrete designs, that is, designs of the form
\begin{align}
\xi = \sum_{i=1}^n w_i \delta_{x_i}
\end{align}
with finitely many design points $x_1, \dots, x_n$ and corresponding importance weights $w_1, \dots, w_n$; but for theoretical investigations, non-discrete designs are important as well. 
In order to quantify the information content of a given design $\xi$ for $f$ around $\ol{\theta}$, one usually takes 
the so-called information matrix of $\xi$ for $f$ around $\ol{\theta}$, that is, the positive semidefinite matrix
\begin{align} \label{eq:information-matrix-intro}
M_f(\xi,\ol{\theta}) := \int_X D_\theta f(x,\ol{\theta})^\top \varsigma^{-1} D_\theta f(x,\ol{\theta}) \d \xi(x)
\in \Rpsd^{\dimp \times \dimp}
\end{align}
with $\varsigma \in \Rpd^{\dimy \times \dimy}$ denoting the positive definite covariance matrix of the measurement errors for the considered output quantity. 
\smallskip

With this notion of information content, the basic task 
of locally optimal experimental design can concisely be formulated as finding a design $\xi^*$ that maximizes the information matrix map $\xi \mapsto M_f(\xi,\ol{\theta}) \in \Rpsd^{\dimp \times \dimp}$ w.r.t.~the standard (L\"owner) ordering on the set 
of positive semidefinite matrices. Since this matrix-valued optimization problem, however, has no solution in general~\cite{FeLe} (Example~2.1), one minimizes a suitable scalar-valued function $\xi \mapsto \Psi(M_f(\xi,\ol{\theta})) \in \R \cup \{\infty\}$ of the information matrix instead. Specifically, one considers convex antitonic functions $\Psi: \Rpsd^{\dimp \times \dimp} \to \R \cup \{\infty\}$ which, in this context, are called design criteria. So, summarizing, the basic task of locally optimal experimental design can be formulated as follows: for a given design criterion $\Psi$, find a solution $\xi^*$ to the locally optimal design problem
\begin{align} \label{eq:OED(X)-intro}
\min_{\xi \in \Xi(X)} \Psi(M_f(\xi,\ol{\theta})),
\end{align}
where $\Xi(X)$ denotes the convex set of all designs on $X$. Since the design criterion $\Psi$ is convex by assumption and the information matrix map $\xi \mapsto M_f(\xi,\ol{\theta})$ is linear by~\eqref{eq:information-matrix-intro}, the locally optimal design problem~\eqref{eq:OED(X)-intro} is a convex optimization problem. It is finite- or infinite-dimensional depending on whether the design space $X$ is finite or infinite. 
\smallskip

In the extensive literature on locally optimal experimental design, there exists a large variety of iterative algorithms that are specifically tailored to solving convex design problems of the kind~\eqref{eq:OED(X)-intro}. Important instances of such algorithms can be found in the works~\cite{Fe, Wy70, At73, SiTiTo78, Bo86, Yu10, Yu11, YaBiTa13}, among others. See also~\cite{AtDoTo, FeLe} and especially \cite{PrPa} for a good overview of these and many more iterative algorithms for (locally) optimal experimental design. 

\subsection{Contributions of this paper}

In the present paper, we build upon the most recent and efficient of the aforementioned optimal design algorithms, namely the adaptive discretization algorithm from~\cite{YaBiTa13}. We refine and improve this algorithm in various respects and, most importantly, we establish novel termination, convergence, and convergence rate results for our adaptive discretization algorithms. In particular, we establish a sublinear convergence rate result under very general assumptions on the design criterion $\Psi$ and, most notably, a linear convergence rate result under the additional assumption that the design criterion $\Psi$ is strongly convex and the design space $X$ is finite. 
So far, the convergence rate of optimal design algorithms like~\cite{Fe, Wy70, At73, SiTiTo78, Bo86, Yu10, Yu11, YaBiTa13} has not been investigated systematically yet, to the best of our knowledge. 
With the present paper, we 
close this gap in the literature. We also illustrate the practical use of the proposed algorithms by means of two 
application examples from chemical engineering, the first one being based on a stationary process model $f$ and the second one being based a dynamic process model $f$. 
\smallskip

In order to explain our 
contributions in more detail, we first have to briefly recap how the algorithm from~\cite{YaBiTa13} works. It is an iterative algorithm that in every iteration proceeds in two steps. In the  first step, the algorithm computes a solution $\xi^k$ to the discretized design problem
\begin{align} \label{eq:OED(X^k)-intro}
\min_{\xi \in \Xi(X^k)} \Psi(M_f(\xi,\ol{\theta})),
\end{align}
where $X^k \subset X$ is the current discretization (finite subset) of $X$. In the second step, the algorithm checks how far the solution $\xi^k$ to~\eqref{eq:OED(X^k)-intro} is from being already optimal for the original design problem~\eqref{eq:OED(X)-intro}. And for that purpose, it searches for a worst violator $x^k$ of the (necessary and sufficient) optimality condition of $\xi^k$ for~\eqref{eq:OED(X)-intro}, that is, it computes a solution $x^k$ to the worst-violator problem
\begin{align} \label{eq:Viol(xi^k)-intro}
\min_{x \in X} \psi(M_f(\xi^k,\ol{\theta}), x)
\end{align} 
where $\psi$ is a suitable directional derivative of $\Psi$, the so-called sensitivity function of $\Psi$. In the case where $x^k$ actually is not a violator of the optimality condition, $\xi^k$ already is a solution to the original problem~\eqref{eq:OED(X)-intro} and the iteration is terminated. In the opposite case, however, the worst violator $x^k$ is added to the discretization according to 
\begin{align} \label{eq:X^k+1-intro}
X^{k+1} := \supp \xi^k \cup \{x^k\}
\end{align}
and the iteration is continued with the updated discretization~\eqref{eq:X^k+1-intro}. In this relation, $\supp \xi^k \subset X^k$ denotes the support of the design $\xi^k$, 
of course.
\smallskip

We refine and improve this algorithm mainly in two respects. (i) Instead of the exact optimality condition for $\xi^k$, our algorithms work with the less restrictive (necessary and sufficient) condition for $\eps$-approximate optimality of $\xi^k$, namely
\begin{align} \label{eq:eps-optimality-condition-intro}
\psi((M_f(\xi^k,\ol{\theta}), x) \ge -\eps \qquad (x \in X)
\end{align}  
with some user-specified $\eps \in [0,\infty)$. (ii) Instead of exact solutions of the subproblems~\eqref{eq:OED(X^k)-intro} and~\eqref{eq:Viol(xi^k)-intro}, our algorithms only require approximate solutions $\xi^k$ and $x^k$ to these problems. In particular, the strict versions of our algorithms require only approximately worst violators $x^k$ of~\eqref{eq:eps-optimality-condition-intro} and the relaxed versions of our algorithms even require only arbitrary violators $x^k$ of~\eqref{eq:eps-optimality-condition-intro}. 
It is clear that with these modifications, the computational effort per iteration is significantly reduced, in general.
\smallskip

We then establish novel termination, convergence, and convergence rate results for the proposed algorithms.
A bit more specifically, we begin with basic termination and convergence results under minimal assumptions, namely essentially the lower semicontinuity, convexity, and continuous directional differentiability of the design criterion $\Psi$. 
We then establish a sublinear convergence rate result for our strict algorithms under the additional assumption that the design criterion $\Psi$ is locally $L$-smooth. Specifically, we show that for the iterates $(\xi^k)_{k\in K}$ of our strict algorithms, the optimality errors can be bounded above as
\begin{align} \label{eq:sublinear-convergence-intro}
\Psi(M_f(\xi^k,\ol{\theta})) - \Psi^* \le C/(k+1) \qquad (k \in K),
\end{align}
where $C$ is a constant depending on the initial optimality error $\Psi(M_f(\xi^0,\ol{\theta})) - \Psi^*$, the local smoothness constant of $\Psi$, and the diameter of the matrix set $M_f(\Xi(X),\ol{\theta})$. 
And finally, we establish a linear convergence rate result for our strict algorithms under the additional assumption that the design criterion $\Psi$ is locally $L$-smooth and $\mu$-strongly convex and the design space $X$ is finite. Specifically, we show that for the iterates $(\xi^k)_{k\in K}$ of our strict algorithms, the optimality errors can be bounded above as
\begin{align} \label{eq:linear-convergence-intro}
\Psi(M_f(\xi^k,\ol{\theta})) - \Psi^* \le C r^k \qquad (k \in K),
\end{align}
where $C := \Psi(M_f(\xi^0,\ol{\theta})) - \Psi^*$ is the initial optimality error and $r \in [1/2, 1)$ is a constant depending on the local smoothness constant $L$, the strong convexity constants $\mu$ of $\Psi$, and on the diameter and the pyramidal width~\cite{LaJa15} of the matrix set $M_f(\Xi(X),\ol{\theta})$ (which is a polytope by the assumed finiteness of $X$). As we will see, 
from a practical point of view, the assumptions of the sublinear and also of the linear convergence rate result pose no serious restrictions. In particular, 
they are satisfied for the arguably most commonly used design criteria, namely the A- and the D-criterion.

\subsection{Conventions on notation}

In the entire paper, we write $d := \dimp \in \N$ for the number of model parameters and we use the symbols 
\begin{align}
\Rpd^{d \times d} \subset \Rpsd^{d \times d} \subset \R^{d \times d}_{\mathrm{sym}}
\end{align}
to denote, respectively, the sets of positive definite, positive semidefinite, and symmetric $d \times d$ matrices over $\R$. 
%
%
Additionally, for a given matrix $M \in \R^{d\times d}$,
\begin{align}
\norm{M} := \max_{|v| \le 1} |Mv|
\qquad \text{and} \qquad
|M| := \tr(M^\top M)^{1/2}
\end{align}
will always denote the operator norm (Schatten-$\infty$ norm) of $M$ induced by the $\ell^2$-norm $|\cdot|$ on $\R^d$ and, respectively, the Frobenius norm (Schatten-$2$ norm) of $M$. 
Also, for any metric space $X$, we denote by $\Xi(X)$ the set of probability measures on $\mathcal{B}_X$ (Borel sigma-algebra of $X$) and by $\delta_x \in \Xi(X)$ we denote the point measure concentrated at $x$. And finally, monotonicity -- especially antitonicity and monotonic increasing- and decreasingness -- are always understood in the non-strict sense (with non-strict inequalities). In the case of strict monotonicity, we will always explicitly indicate this. 


\section{Setting and preliminaries}

In this section, we introduce the formal setting of locally optimal experimental design and collect the necessary preliminaries for our termination, convergence, and convergence rate results. 

\subsection{Setting}

In this section, we introduce 
the setting of locally optimal experimental design in formal terms. As indicated in the introduction, the starting point is a -- linear or nonlinear -- parametric model 
\begin{align}
f: \X \times \Theta \to \Y
\end{align}
for the prediction of the considered output quantities $y \in \Y = \R^{\dimy}$ as a function of relevant input quantities $x \in \X$. As usual, such a model is called linear iff it is affine-linear w.r.t.~to the model parameters $\theta \in \Theta$, that is, iff it is of the form $f(x,\theta) = c(x) + J(x)\theta$ for some arbitary functions $c: \X \to \R^{\dimy}$ and $J: \X \to \R^{\dimy \times \dimp}$. All one needs to assume about $f$ 
to get locally optimal design going are a few mild regularity conditions (Condition~\ref{cond:regularity}) and a representativity condition (Condition~\ref{cond:observation-representability}), which is completely standard in  experimental design and parameter estimation~\cite{FeLe, PrPa, SeWi}.

\begin{cond} \label{cond:regularity}
\begin{itemize}
\item[(i)] $X$ is a compact metric space (the design space) and $X$ is contained in the set $\X$ (the input space)
\item[(ii)] $\Theta$ is a compact subset of $\R^{\dimp}$ (the model parameter space) that is equal to the closure of its interior, in short, $\ol{\Theta^\circ} = \Theta$
\item[(iii)] $\Theta \ni \theta \mapsto f(x,\theta)$ is differentiable for every $x \in X$ and $X \ni x \mapsto D_\theta f(x,\ol{\theta})$ is continuous for every $\ol{\theta} \in \Theta$. 
\end{itemize}
\end{cond}

\begin{cond} \label{cond:observation-representability}
A parameter value $\theta^* \in \Theta$ exists such that for every $n \in \N$ and every set 
of input values $x_1, \dots, x_n \in X$, the corresponding measured values $y_1, \dots, y_n \in \R^{\dimy}$ of the output quantity are given by the predictions $f(x_1,\theta^*), \dots, f(x_n,\theta^*)$ of the model $f(\cdot,\theta^*)$ up to independent normally distributed measurement errors $\epsilon_1, \dots, \epsilon_n$, that is, 
\begin{align} \label{eq:representability}
y_i = f(x_i,\theta^*) + \epsilon_i \qquad (i \in \{1,\dots,n\})
\end{align}
where $\epsilon_1, \dots, \epsilon_n$ are realizations of independent and normally distributed measurement errors $\eps_1, \dots, \eps_n$ having mean $0$ and a known covariance matrix $\varsigma \in \R^{\dimy \times \dimy}_{\mathrm{pd}}$.
\end{cond}

Conventionally, every tuple $\tilde{x} = (x_1, \dots, x_n) \in X^n$ of -- not necessarily distinct -- design points as above is called an exact design. Also, any value $\theta^*$ as above is called (the) true model parameter value. Since the true value $\theta^*$ is unknown, of course, one needs to come up with reliable estimates for it and a standard way of doing so is least-squares estimation, that is, for a given exact design $\tilde{x} = (x_1, \dots, x_n) \in X^n$ one measures the values $\tilde{y} = (y_1, \dots, y_n)$ of the output quantity and then computes a  corresponding least-squares estimate 
\begin{align}
\lse_f(\tilde{x},\tilde{y}) \in \argmin_{\theta \in \Theta} \sum_{i=1}^n (f(x_i,\theta)-y_i)^\top \varsigma^{-1} (f(x_i,\theta)-y_i).
\end{align}
In view of Condition~\ref{cond:regularity}, 
it is clear that such a least-squares estimate exists and that it can be chosen to depend measurably on $\tilde{y}$  (Lemma~2.9 of \cite{PrPa}). Since for every design $\tilde{x}$ the corresponding measurement values $\tilde{y}$ are subjected to random measurement errors by Condition~\ref{cond:observation-representability}, the same is true for the computed least-squares estimate $\lse_f(\tilde{x},\tilde{y})$. Specifically, $\lse_f(\tilde{x},\tilde{y})$ is a realization of the random variable $\lse_f(\tilde{x},\tilde{y}(\tilde{x}))$, where 
\begin{align}
\tilde{y}(\tilde{x}) := \tilde{f}(\tilde{x},\theta^*) + \tilde{\eps}(\tilde{x})
\end{align}
is the 
random variable defined by the predictions $\tilde{f}(\tilde{x},\theta^*) := (f(x_1,\theta^*), \dots, f(x_n,\theta^*))$ of the true model at $\tilde{x}$ plus normally distributed measurement errors $\tilde{\eps}(\tilde{x})$ with mean $0$ and covariance matrix $\tilde{\varsigma} := \diag(\varsigma, \dots, \varsigma) \in \Rpd^{n\dimy \times n\dimy}$. 
\smallskip

In optimal experimental design~\cite{KiWo59, Fe, Ki74, Si, Pu, AtDoTo, FeLe, PrPa}, the goal is to find a design $\tilde{x}$ such that the uncertainty $\Cov \lse_f(\tilde{x},\tilde{y}(\tilde{x}))$ of the corresponding least-squares estimate 
becomes minimal. As is well-known, 
for nonlinear models $f$ one usually has no closed-form expression of this covariance and one therefore has to resort to suitable approximations for it. In locally optimal experimental design, one uses the approximation
\begin{align} \label{eq:covariance-approximation}
\Cov \lse_f(\tilde{x},\tilde{y}(\tilde{x})) \approx \Cov \lse_{f^{\lin}_{\ol{\theta}}}(\tilde{x},\tilde{y}(\tilde{x})).
\end{align}  
In other words, one approximates the covariance of the least-squares estimate for $f$ by the covariance of the least-squares estimate for the linearization $f^{\lin}_{\ol{\theta}}$ of $f$ around a suitable reference parameter value $\ol{\theta}$, that is,
\begin{align} \label{eq:linearized-model}
f^{\lin}_{\ol{\theta}}(x,\theta) := f(x,\ol{\theta}) + D_\theta f(x,\ol{\theta})(\theta-\ol{\theta})
\qquad ((x,\theta) \in \X \times \R^{\dimp}).
\end{align}
And this linearized covariance, in turn, can be 
compactly expressed through the information matrix~\eqref{eq:information-matrix-intro} of the (approximate) design 
\begin{align} \label{eq:approximate-design-corresponding-to-exact-design}
\xi_{\tilde{x}} := \frac{1}{n} \sum_{i=1}^n \delta_{x_i}
\end{align}
corresponding to the exact design $\tilde{x} = (x_1, \dots, x_n)$.

\begin{lm}
Suppose that Conditions~\ref{cond:regularity} and~\ref{cond:observation-representability} are satisfied. If $\tilde{x} \in X^n$ is an arbitrary exact design of size $n \in \N$ and $\ol{\theta} \in \Theta$ is an arbitrary reference parameter value, then 
\begin{align} \label{eq:linearized-covariance-and-information-matrix}
\Cov \lse_{f^{\lin}_{\ol{\theta}}}(\tilde{x},\tilde{y}(\tilde{x})) = \frac{1}{n} M_f(\xi_{\tilde{x}}, \ol{\theta})^+,
\end{align}
where $\lse_{f^{\lin}_{\ol{\theta}}}(\tilde{x},\tilde{y})$ is the minimum-norm least-squares estimate of $f^{\lin}_{\ol{\theta}}$ for $\tilde{x}$ and $\tilde{y}$ 
and $M_f(\xi_{\tilde{x}}, \ol{\theta})^+$ is the (Moore-Penrose) pseudo-inverse of the information matrix of~\eqref{eq:approximate-design-corresponding-to-exact-design}. 
\end{lm}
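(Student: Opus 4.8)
\emph{Proof plan.} The idea is to recognize the problem defining the minimum-norm least-squares estimate of the linearization as a whitened ordinary least-squares problem and to read off its covariance from the Moore--Penrose pseudo-inverse. Concretely, write $\tilde{J} \in \R^{n\dimy \times \dimp}$ for the vertical stack of the Jacobians $D_\theta f(x_1,\ol{\theta}), \dots, D_\theta f(x_n,\ol{\theta})$, so that by~\eqref{eq:linearized-model} the stacked linearized model at $\tilde{x}$ is $\theta \mapsto \tilde{f}(\tilde{x},\ol{\theta}) + \tilde{J}\,(\theta - \ol{\theta})$, with parameter domain all of $\R^{\dimp}$. Then $\lse_{f^{\lin}_{\ol{\theta}}}(\tilde{x},\tilde{y}(\tilde{x}))$ is a minimum-norm element of $\argmin_{\theta \in \R^{\dimp}} \bigl(\tilde{J}(\theta-\ol{\theta}) - r\bigr)^{\top} \tilde{\varsigma}^{-1} \bigl(\tilde{J}(\theta-\ol{\theta}) - r\bigr)$, where $r := \tilde{y}(\tilde{x}) - \tilde{f}(\tilde{x},\ol{\theta})$. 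Substituting $u := \theta - \ol{\theta}$ and factoring $\tilde{\varsigma}^{-1} = \tilde{\varsigma}^{-1/2}\tilde{\varsigma}^{-1/2}$ with $\tilde{\varsigma}^{-1/2}$ symmetric and invertible turns the objective into $|Au - b|^2$ with $A := \tilde{\varsigma}^{-1/2}\tilde{J}$ and $b := \tilde{\varsigma}^{-1/2}r$, i.e.\ into an ordinary least-squares problem.

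Next I would use the classical description $\argmin_u |Au - b|^2 = A^+ b + \ker A$, whose element of minimal Euclidean norm is $A^+ b$. Thus the minimum-norm minimizer in $u$ is $A^+ b$, and the minimum-norm minimizer in $\theta$ (the orthogonal projection of the origin onto $\ol{\theta} + A^+ b + \ker A$) is $(A^+ A)\,\ol{\theta} + A^+ b$; in either reading $\lse_{f^{\lin}_{\ol{\theta}}}(\tilde{x},\tilde{y}(\tilde{x}))$ differs from $A^+ b$ only by a deterministic vector, so both readings yield the same covariance matrix $\Sigma := \Cov\, \lse_{f^{\lin}_{\ol{\theta}}}(\tilde{x},\tilde{y}(\tilde{x}))$, and we may compute as though $\lse_{f^{\lin}_{\ol{\theta}}}(\tilde{x},\tilde{y}(\tilde{x})) = \ol{\theta} + A^+ b$. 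By Condition~\ref{cond:observation-representability}, $\tilde{y}(\tilde{x}) = \tilde{f}(\tilde{x},\theta^*) + \tilde{\eps}(\tilde{x})$ with $\tilde{\eps}(\tilde{x})$ normal of mean $0$ and covariance $\tilde{\varsigma}$, hence $b = b_0 + \tilde{\varsigma}^{-1/2}\tilde{\eps}(\tilde{x})$ with $b_0$ deterministic and $\Cov\bigl(\tilde{\varsigma}^{-1/2}\tilde{\eps}(\tilde{x})\bigr) = I_{n\dimy}$. Therefore
\begin{align*}
\Sigma = A^+ \, \Cov\bigl(\tilde{\varsigma}^{-1/2}\tilde{\eps}(\tilde{x})\bigr)\, (A^+)^{\top} = A^+ (A^+)^{\top} = (A^{\top} A)^+ ,
\end{align*}
the last equality being the standard Moore--Penrose identity $A^+(A^+)^{\top} = (A^{\top} A)^+$, which I would verify from $A^+ = (A^{\top} A)^+ A^{\top}$ and the symmetry of $(A^{\top} A)^+$.

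Finally I would identify $A^{\top} A$: since $\tilde{\varsigma} = \diag(\varsigma,\dots,\varsigma)$ is block-diagonal,
\begin{align*}
A^{\top} A &= \tilde{J}^{\top} \tilde{\varsigma}^{-1}\tilde{J} = \sum_{i=1}^n D_\theta f(x_i,\ol{\theta})^{\top} \varsigma^{-1} D_\theta f(x_i,\ol{\theta}) \\
&= n \int_X D_\theta f(x,\ol{\theta})^{\top} \varsigma^{-1} D_\theta f(x,\ol{\theta}) \d \xi_{\tilde{x}}(x) = n\, M_f(\xi_{\tilde{x}},\ol{\theta})
\end{align*}
by the definition~\eqref{eq:approximate-design-corresponding-to-exact-design} of $\xi_{\tilde{x}}$, and since $(nM)^+ = \tfrac{1}{n} M^+$ for every $M \in \Rpsd^{d\times d}$ this gives $\Sigma = (A^{\top} A)^+ = \tfrac{1}{n} M_f(\xi_{\tilde{x}},\ol{\theta})^+$, as claimed. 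I expect the only genuinely delicate point to be the minimum-norm bookkeeping in the second paragraph --- checking that the natural readings of ``minimum-norm least-squares estimate'' all lead to the same covariance and that the relevant parameter domain of the linearized model is $\R^{\dimp}$ rather than $\Theta$ --- together with the short computation $A^+(A^+)^{\top} = (A^{\top} A)^+$; everything else is routine linear algebra.
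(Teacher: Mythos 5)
Your argument is correct and complete. Note that the paper itself states this lemma without any proof, so there is no in-paper argument to compare against; your whitening reduction ($A := \tilde{\varsigma}^{-1/2}\tilde{J}$, $b := \tilde{\varsigma}^{-1/2}r$), the identification of the minimum-norm minimizer via $A^+b + \ker A$, the identity $A^+(A^+)^\top = (A^\top A)^+$, and the block-diagonal computation $A^\top A = n\,M_f(\xi_{\tilde{x}},\ol{\theta})$ constitute the standard and intended derivation. You are also right to flag the two genuinely delicate points: the parameter domain of $f^{\lin}_{\ol{\theta}}$ must be read as all of $\R^{\dimp}$ (consistent with~\eqref{eq:linearized-model}) for the pseudo-inverse formula to hold, and the various readings of ``minimum-norm'' differ only by deterministic shifts and hence share the same covariance.
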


In view of~\eqref{eq:covariance-approximation} and~\eqref{eq:linearized-covariance-and-information-matrix}, it becomes clear why the information matrix $M_f(\xi,\ol{\theta})$ 
is a valid measure of (local) information content of the design $\xi$ about the true model $f(\cdot, \theta^*)$, thus justifying its central role in locally optimal experimental design.

\subsection{Some technical preliminaries}

In this section, we collect some basic definitions and facts that are important for the optimal design of experiments and, in particular, for our convergence results. 

\begin{lm} \label{lm:rint(pos-def-matrices)}
$\Rpd^{d \times d}$ is convex and open in $\R^{d \times d}$.
Additionally, $\Rpsd^{d \times d}$ is convex and the closure of $\Rpd^{d \times d}$ in $\R^{d \times d}$. 
\end{lm}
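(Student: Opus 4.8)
The plan is to reduce each assertion to an elementary statement about the scalar quantities $v^\top A v$, via the quadratic-form characterizations $\Rpd^{d \times d} = \{A \in \R^{d \times d}_{\mathrm{sym}} : v^\top A v > 0 \text{ for all } v \ne 0\}$ and $\Rpsd^{d \times d} = \{A \in \R^{d \times d}_{\mathrm{sym}} : v^\top A v \ge 0 \text{ for all } v\}$. As all three sets in the displayed chain lie in $\R^{d \times d}_{\mathrm{sym}}$, I regard this as the ambient space, endowed with its norm topology, and establish the four claims in turn: convexity and openness of $\Rpd^{d \times d}$, convexity of $\Rpsd^{d \times d}$, and the identity $\Rpsd^{d \times d} = \overline{\Rpd^{d \times d}}$.

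For the two convexity statements I would argue directly from these characterizations. If $A, B$ lie in $\Rpd^{d \times d}$ (respectively $\Rpsd^{d \times d}$) and $t \in [0,1]$, then $tA + (1-t)B$ is again symmetric and satisfies $v^\top (tA+(1-t)B) v = t\,v^\top A v + (1-t)\,v^\top B v$ for every $v$; being a convex combination of two positive (respectively nonnegative) reals, this is $> 0$ for $v \ne 0$ (respectively $\ge 0$). Hence both sets are convex.

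For openness I would work with the smallest-eigenvalue functional $\lambda_{\min}(A) := \min_{|v|=1} v^\top A v$, the minimum being attained by compactness of the unit sphere. Fixing $A \in \Rpd^{d \times d}$ and setting $c := \lambda_{\min}(A) > 0$, I would estimate, for any symmetric $H$ with $\norm{H} < c$ and any $v \ne 0$, that $v^\top (A+H) v = v^\top A v + v^\top H v \ge c\,|v|^2 - \norm{H}\,|v|^2 = (c - \norm{H})\,|v|^2 > 0$, using $v^\top A v \ge c\,|v|^2$ together with $|v^\top H v| \le \norm{H}\,|v|^2$ (Cauchy--Schwarz and the definition of the operator norm). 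Thus every symmetric matrix within distance $c$ of $A$ again lies in $\Rpd^{d \times d}$, which is the asserted openness; equivalently, $\lambda_{\min}$ is $1$-Lipschitz in the operator norm and $\Rpd^{d \times d}$ is the preimage of $(0,\infty)$ under this continuous functional.

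For the closure identity I would prove two inclusions. The inclusion $\overline{\Rpd^{d \times d}} \subseteq \Rpsd^{d \times d}$ follows once I show that $\Rpsd^{d \times d}$ is closed and contains $\Rpd^{d \times d}$: closedness is immediate from $\Rpsd^{d \times d} = \R^{d \times d}_{\mathrm{sym}} \cap \bigcap_{v} \{A : v^\top A v \ge 0\}$, an intersection of preimages of $[0,\infty)$ under the continuous maps $A \mapsto v^\top A v$ together with the closed subspace $\R^{d \times d}_{\mathrm{sym}}$. For the reverse inclusion I would regularize: for $A \in \Rpsd^{d \times d}$ and $\eps > 0$ the matrix $A + \eps I$ is symmetric with $v^\top (A + \eps I) v = v^\top A v + \eps\,|v|^2 \ge \eps\,|v|^2 > 0$ for $v \ne 0$, hence $A + \eps I \in \Rpd^{d \times d}$, while $A + \eps I \to A$ as $\eps \to 0^+$, so $A \in \overline{\Rpd^{d \times d}}$. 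Combining the two inclusions yields $\Rpsd^{d \times d} = \overline{\Rpd^{d \times d}}$. All of the algebra here is immediate; the only steps requiring genuine (if modest) care are the two topological facts—the Lipschitz continuity of $\lambda_{\min}$ underpinning openness and the closedness of the semidefinite cone underpinning the closure—so I would expect those to be the main obstacle.
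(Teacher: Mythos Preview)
Your argument is correct and is exactly the routine check the paper intends: its own proof reads simply ``Straightforward verification.'' Your decision to take $\R^{d \times d}_{\mathrm{sym}}$ as the ambient space is also the right reading---as literally stated, openness in all of $\R^{d \times d}$ fails for $d \ge 2$, since $\Rpd^{d \times d}$ sits inside the proper closed subspace $\R^{d \times d}_{\mathrm{sym}}$.
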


\begin{proof}
Straightforward verification.
\end{proof}


\subsubsection{Convexity and differentiability}

As usual, for a subset $S$ of some 
real vector space $V$, 
the convex hull $\conv(S)$ of $S$ is the smallest convex set in $V$ that comprises $S$. 

\begin{lm}[Carath\'{e}odory] \label{lm:caratheodory}
If $S$ is an arbitrary subset of some finite-dimensional real vector space $V$, then every element of $\conv(S)$ can be written as a convex combination of at most $\dim V + 1$ points from $S$. Additionally, every element of $\conv(S) \cap \partial \conv(S)$ can be written as a convex combination of at most $\dim V$ points from $S$.
\end{lm}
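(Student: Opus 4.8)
The plan is to prove the first (Carath\'eodory-type) bound by the classical weight-pushing reduction and then to deduce the boundary bound from it by slicing $\conv(S)$ with a supporting hyperplane.

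First I would establish the bound $\dim V + 1$. Given $x \in \conv(S)$, write $x = \sum_{i=1}^m \lambda_i s_i$ with $s_i \in S$, all $\lambda_i > 0$ and $\sum_{i=1}^m \lambda_i = 1$, and among all such representations choose one with $m$ minimal. I would then argue by contradiction: if $m \ge \dim V + 2$, the $m$ points $s_1,\dots,s_m$ are affinely dependent (they are more than $\dim V + 1$ points of $V$; equivalently the $m-1$ vectors $s_2-s_1,\dots,s_m-s_1$ are linearly dependent), so there are reals $\mu_1,\dots,\mu_m$, not all zero, with $\sum_{i=1}^m \mu_i = 0$ and $\sum_{i=1}^m \mu_i s_i = 0$. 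Replacing each $\lambda_i$ by $\lambda_i - t\mu_i$ preserves both the total mass $1$ and the value $x$ for every $t \in \R$; since the $\mu_i$ sum to zero but are not all zero, at least one is positive, so $t := \min\{\lambda_i/\mu_i : \mu_i > 0\}$ is a well-defined positive number, and for this $t$ all the new weights $\lambda_i - t\mu_i$ are nonnegative and at least one vanishes. Discarding the vanishing terms exhibits $x$ as a convex combination of at most $m-1$ points of $S$, contradicting minimality of $m$. Hence $m \le \dim V + 1$.

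Next I would treat the boundary case. Let $x \in \conv(S) \cap \partial\conv(S)$. If $\conv(S)$ has empty interior in $V$ then, being convex, it is contained in some affine hyperplane $H$ of $V$; since $S \subseteq \conv(S) \subseteq H$ and $\dim H = \dim V - 1$, applying the first part inside (a translate of) $H$ already exhibits $x$ as a convex combination of at most $\dim V$ points of $S$. Otherwise $\conv(S)$ has nonempty interior and, since $x$ lies on its boundary, the supporting hyperplane theorem provides a nonzero linear functional $\phi$ on $V$ and a constant $c$ with $\phi \le c$ on $\conv(S)$ and $\phi(x) = c$. Writing $x = \sum_{i=1}^m \lambda_i s_i$ with $\lambda_i > 0$, $\sum_i \lambda_i = 1$ and $s_i \in S$, the chain $c = \phi(x) = \sum_i \lambda_i \phi(s_i) \le \sum_i \lambda_i c = c$ together with $\phi(s_i) \le c$ and $\lambda_i > 0$ forces $\phi(s_i) = c$ for every $i$. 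Thus all the $s_i$ lie in the affine hyperplane $H := \{v \in V : \phi(v) = c\}$, so $x \in \conv(S \cap H)$, and applying the first part inside $H$ (which has dimension $\dim V - 1$) yields a representation of $x$ as a convex combination of at most $(\dim V - 1) + 1 = \dim V$ points of $S$.

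The weight-pushing bookkeeping is entirely routine, and the only substantive external ingredient is the supporting hyperplane theorem used in the second part. The one spot that needs genuine care is the degenerate subcase in which $\conv(S)$ has empty interior: there $\partial\conv(S) = \conv(S)$, so there is no supporting hyperplane at such a point to exploit, and one must instead observe directly that $\conv(S)$ --- and hence $S$ --- already lies in a hyperplane of $V$. I expect this to be the only real subtlety.
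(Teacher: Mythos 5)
Your proof is correct. The paper itself gives no argument for this lemma --- it simply cites Theorem~17.1 of Rockafellar for the first part and Appendix~2 of Silvey for the second --- and your write-up is exactly the standard proof those references contain: the weight-pushing reduction via an affine dependence for the bound $\dim V + 1$, and a supporting hyperplane at the boundary point to drop into a $(\dim V - 1)$-dimensional affine subspace for the refined bound. Your explicit treatment of the degenerate case where $\conv(S)$ has empty interior (so that no supporting hyperplane argument is available and one must instead observe that $S$ already lies in a hyperplane) is the one genuinely delicate point, and you handle it correctly.
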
 

\begin{proof}
See Theorem~17.1 of~\cite{Rockafellar} for a proof of the first part of the lemma and Appendix~2 of~\cite{Si} for a proof of the second part of the lemma.
\end{proof}

%

We recall that for an extended real-valued function $\Psi: S \to \R \cup \{\infty\}$ on some subset $S$ of a 
real vector space $V$, the domain $\dom \Psi$ is defined as 
\begin{align}
\dom\Psi := \{x \in S: \Psi(x) < \infty\},
\end{align}
that is, the set of points where $\Psi$ takes finite values. We also recall that a function $\Psi: C \to \R \cup \{\infty\}$ on some convex subset $C$ of $V$ is convex iff
\begin{align}
\Psi(\alpha x + (1-\alpha)y) \le \alpha \Psi(x) + (1-\alpha) \Psi(y)
\end{align}
for all $x,y \in C$ and all $\alpha \in [0,1]$. It is trivial to verify that the extended real-valued function $\Psi: C \to \R \cup \{\infty\}$ 
is convex if and only if $\dom\Psi$ is convex and the real-valued function $\Psi|_{\dom\Psi}$ is convex.
Similarly, $\Psi: C \to \R \cup \{\infty\}$ on some convex subset $C$ of $V$ is called strictly convex iff
\begin{align} \label{eq:strict-convexity-def}
\Psi(\alpha x + (1-\alpha)y) < \alpha \Psi(x) + (1-\alpha) \Psi(y)
\end{align}
for all $x,y \in \dom\Psi$ with $x \ne y$ and all $\alpha \in (0,1)$ (Section~26 of~\cite{Rockafellar}). In other words, $\Psi: C \to \R \cup \{\infty\}$ is strictly convex if and only if its real-valued restriction $\Psi|_{\dom\Psi}$ is strictly convex. We call $\Psi: C \to \R \cup \{\infty\}$ strictly mid-point convex iff~\eqref{eq:strict-convexity-def} holds true with $\alpha := 1/2$ for all $x,y \in \dom\Psi$ with $x \ne y$. 

\begin{lm} \label{lm:strict-convexity}
Suppose that $\Psi: C \to \R \cup \{\infty\}$ is convex, where $C$ is a convex subset of some 
real vector space. Then $\Psi$ is strictly convex if and only if it is strictly mid-point convex. 
\end{lm}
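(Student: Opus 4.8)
The plan is to prove the two implications separately, the forward direction being trivial and the reverse direction requiring the real work. Since $\Psi$ is convex if and only if $\dom\Psi$ is convex and $\Psi|_{\dom\Psi}$ is convex (and similarly for strict convexity and strict mid-point convexity, as noted just before the lemma), I may reduce at once to the case of a \emph{real-valued} convex function on a convex set $C$ — that is, I will assume $\dom\Psi = C$ throughout. The forward direction is then immediate: if $\Psi$ is strictly convex, then in particular the defining inequality~\eqref{eq:strict-convexity-def} holds with $\alpha = 1/2$, so $\Psi$ is strictly mid-point convex.

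For the reverse direction, suppose $\Psi$ is convex and strictly mid-point convex, and suppose toward a contradiction that $\Psi$ fails to be strictly convex: there exist $x \ne y$ in $C$ and $\alpha_0 \in (0,1)$ with $\Psi(\alpha_0 x + (1-\alpha_0) y) = \alpha_0 \Psi(x) + (1-\alpha_0)\Psi(y)$. Passing to the one-variable function $g: [0,1] \to \R$, $g(t) := \Psi((1-t)x + ty)$, which is convex (composition of the convex $\Psi$ with an affine map), the equality says $g$ agrees with the chord $\ell(t) := (1-t)g(0) + t g(1)$ at the interior point $t_0 := 1 - \alpha_0$. A standard convexity fact — which I would state and prove in one line from the three-slopes inequality, or by a direct convexity argument — is that a convex function which touches one of its chords at an interior point must coincide with that chord on the whole interval $[0,1]$; in particular $g$ is affine on $[0,1]$, hence on the entire segment $[x,y]$ the function $\Psi$ is affine.

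Now I exploit strict mid-point convexity to reach the contradiction: take any two distinct points $u = (1-s)x+sy$ and $v = (1-s')x+s'y$ on the segment $[x,y]$ with $s \ne s'$ (for instance $u = x$, $v=y$ themselves, or their midpoint and one endpoint). Since $\Psi$ is affine along $[x,y]$, its value at the midpoint $(u+v)/2$ equals $\tfrac12\Psi(u) + \tfrac12\Psi(v)$, which directly contradicts strict mid-point convexity applied to the distinct points $u,v \in \dom\Psi$. This completes the argument.

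The only mildly delicate point is the lemma that a convex function touching a chord interiorly is affine on that chord; everything else is bookkeeping. I would handle it by the slope-monotonicity of convex functions of one variable: for $0 \le t_1 < t_0 < t_2 \le 1$ the difference quotients satisfy $\frac{g(t_0)-g(t_1)}{t_0-t_1} \le \frac{g(t_2)-g(t_0)}{t_2-t_0}$, and combining this with $g(t_0)=\ell(t_0)$ and $g \le \ell$ on $[0,1]$ (convexity) forces $g = \ell$ on all of $[0,1]$. No reliance on differentiability is needed, which is important since $\Psi$ is only assumed convex.
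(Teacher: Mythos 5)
Your proof is correct, and it is precisely the rigorous version of the ``simple verification'' that the paper leaves to the reader with a geometric hint: the key step --- a convex function of one variable that touches a chord at an interior point must coincide with that chord on the whole segment, after which strict mid-point convexity applied to the two endpoints (or any two distinct points of the segment) yields the contradiction --- is exactly what the picture shows. The reduction to the real-valued restriction $\Psi|_{\dom\Psi}$ is also handled correctly, since convexity forces $\alpha x + (1-\alpha)y \in \dom\Psi$ for $x,y \in \dom\Psi$.
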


\begin{proof}
Simple verification -- to get the idea, draw a picture using the geometric interpretation of (strict) convexity.
\end{proof}

%

As usual, a real-valued function $\Psi: S \to \R$ on some subset $S$ of a real vector space $V$ is called directionally differentiable at $x \in S$ in the direction $e \in V$ iff $t \mapsto \Psi(x+te)$ is right-differentiable at $t = 0$. Spelled out, this means that there exists a $t^* \in (0,\infty)$ such that $x+te \in S$ for all $t \in [0,t^*]$ and the limit 
\begin{align} \label{eq:directional-derivative-def}
\partial_e \Psi(x) := \ddt \Psi(x+te)\big|_{t=0+} := \lim_{t\searrow 0} \frac{\Psi(x+te)-\Psi(x)}{t}
\end{align}
exists in $\R$. 
In this case, the limit~\eqref{eq:directional-derivative-def} is called the directional derivative of $\Psi$ at $x$ in the direction $e$ (Section~3.1 of~\cite{Jahn}). 
%
Apart from directional derivatives, we will also need derivatives of real-valued functions defined on arbitrary subsets, instead of just open subsets of a normed vector space. So, let $\Psi: S \to \R$ be a real-valued function on some arbitrary subset $S$ of a real normed vector space $V$ with norm $\norm{\cdot}$. We then call $\Psi$ (continuously) differentiable iff it can be extended to a real-valued function $\tilde{\Psi}: \tilde{S} \to \R$ on some open subset $\tilde{S}$ of $V$ that is (continuously) differentiable in the usual (Fr\'{e}chet) sense (Section~VII.2 of~\cite{AmannEscher} or Section 3.2 of~\cite{Jahn}, for instance). In this case, we will write
\begin{align} \label{eq:derivative-def}
D\Psi(x)e := D\tilde{\Psi}(x)e
\end{align}
for the derivative of $\Psi$ at $x \in S$ in the direction $e \in V$. In general, this derivative~\eqref{eq:derivative-def} depends on the chosen extension $\tilde{\Psi}$, of course. If, however, for a given $x \in S$ and $e \in V$ there exists a $t^* \in (0,\infty)$ such that $x+te \in S$ for all $t \in [0,t^*]$, then the derivative~\eqref{eq:derivative-def} is independent of the chosen differentiable extension $\tilde{\Psi}$  (because then $D\tilde{\Psi}(x)e = \partial_e\Psi(x)$ for every differentiable extension). 

\begin{lm} \label{lm:directional-derivatives}
If $M \in  \Rpd^{d \times d}$ and $p \in (0,\infty)$, then for every direction $E \in \R^{d\times d}$ one has the following directional derivatives:
\begin{gather}
\ddt \log \det(M+tE)^{-1} \big|_{t=0+} = -\tr(M^{-1}E) 
\label{eq:D-criterion-derivative} \\
\ddt \tr((M+tE)^{-p}) \big|_{t=0+} = -p \tr(M^{-p-1}E).
\label{eq:A-criterion-derivative}
\end{gather}
Additionally, the map $\Rpd^{d \times d} \ni M \mapsto M^{-p}$ is locally Lipschitz continuous for every $p \in (0,\infty)$. 
\end{lm}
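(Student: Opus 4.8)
The plan is to treat the three assertions separately, using that $\Rpd^{d\times d}$ is open (Lemma~\ref{lm:rint(pos-def-matrices)}) and, more generally, that both the set of invertible matrices and the set of matrices whose spectrum is disjoint from $(-\infty,0]$ are open in $\R^{d\times d}$. Thus for fixed $M\in\Rpd^{d\times d}$ and $E\in\R^{d\times d}$ the matrix $M+tE$ lies in all of these sets for every $t$ in some two-sided interval $(-\tau,\tau)$, so the maps $t\mapsto\log\det(M+tE)^{-1}$ and $t\mapsto\tr((M+tE)^{-p})$ are well-defined on $(-\tau,\tau)$ and — as the arguments below will show — real-differentiable there; consequently their right derivatives at $t=0$ coincide with their ordinary derivatives, which is what the computations produce. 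For~\eqref{eq:D-criterion-derivative} I would, for $|t|<\tau$, use multiplicativity of the determinant to write $\log\det(M+tE)^{-1}=-\log\det M-\log\det(I+tM^{-1}E)$ and then differentiate with the chain rule, invoking $\frac{\d}{\d t}\det(I+tA)\big|_{t=0}=\tr A$ (Jacobi's formula, or equivalently the multilinearity of $\det$ in its rows); this yields $\frac{\d}{\d t}\log\det(M+tE)^{-1}\big|_{t=0}=-\tr(M^{-1}E)$.

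For~\eqref{eq:A-criterion-derivative} I would pass to the holomorphic (Riesz--Dunford) functional calculus, which is the natural way to give meaning to the non-integer power $(M+tE)^{-p}$ for a possibly non-symmetric argument. Choose a positively oriented piecewise-smooth simple closed contour $\Gamma\subset\C\setminus(-\infty,0]$ encircling the spectrum of $M$; by continuity of the eigenvalues in the matrix, after shrinking $\tau$ we may assume $\Gamma$ also encircles the spectrum of $M+tE$ for all $|t|<\tau$, so that the resolvent $R(z,t):=(zI-(M+tE))^{-1}$ is defined and jointly smooth for $(z,t)$ near $\Gamma\times(-\tau,\tau)$ and $(M+tE)^{-p}=\frac{1}{2\pi\imunit}\oint_\Gamma z^{-p}R(z,t)\,\d z$ with the principal branch of $z^{-p}$. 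Differentiating under the integral sign — justified by the uniform (in $z\in\Gamma$) smoothness of $t\mapsto R(z,t)$ — and using $\partial_tR(z,t)=R(z,t)\,E\,R(z,t)$ gives $\frac{\d}{\d t}(M+tE)^{-p}\big|_{t=0}=\frac{1}{2\pi\imunit}\oint_\Gamma z^{-p}R(z,0)ER(z,0)\,\d z$; taking the trace and using its linearity and cyclicity turns this into $\tr\!\big(\big[\tfrac{1}{2\pi\imunit}\oint_\Gamma z^{-p}(zI-M)^{-2}\,\d z\big]E\big)$. Finally I would identify the bracket as $-pM^{-p-1}$ by diagonalizing $M=Q\diag(\lambda_1,\dots,\lambda_d)Q^\top$ ($Q$ orthogonal, $\lambda_j>0$): the bracket becomes $Q\diag\!\big(\tfrac{1}{2\pi\imunit}\oint_\Gamma z^{-p}(z-\lambda_j)^{-2}\,\d z\big)Q^\top$, and $\tfrac{1}{2\pi\imunit}\oint_\Gamma z^{-p}(z-\lambda_j)^{-2}\,\d z=g'(\lambda_j)$ with $g(z):=z^{-p}$ by Cauchy's integral formula for the first derivative, so the bracket equals $Q\diag(-p\lambda_j^{-p-1})Q^\top=-pM^{-p-1}$, establishing~\eqref{eq:A-criterion-derivative}.

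For the local Lipschitz continuity of $M\mapsto M^{-p}$ I would use that the paper's matrix norm $|\cdot|$ is the Frobenius norm, together with the standard fact that a scalar Lipschitz bound transfers verbatim to a Frobenius-norm bound for functions of symmetric matrices. Given $M_0\in\Rpd^{d\times d}$, openness provides $r>0$ with $\overline{B(M_0,r)}\subset\Rpd^{d\times d}$; by compactness and continuity of $\lambda_{\min}(\cdot),\lambda_{\max}(\cdot)$ the spectra of all matrices in this ball lie in a fixed interval $[c,C]$ with $0<c\le C$, and on $[c,C]$ the function $h(\lambda):=\lambda^{-p}$ is Lipschitz with constant $L:=pc^{-p-1}$ (since $|h'(\lambda)|=p\lambda^{-p-1}\le pc^{-p-1}$ there). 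For symmetric $A=\sum_i\alpha_iP_i$ and $B=\sum_j\beta_jQ_j$ with spectra in $[c,C]$ (spectral decompositions into orthogonal eigenprojections) one has $h(A)-h(B)=\sum_{i,j}(h(\alpha_i)-h(\beta_j))P_iQ_j$ and $A-B=\sum_{i,j}(\alpha_i-\beta_j)P_iQ_j$, and since $\langle P_iQ_j,P_kQ_l\rangle_{\mathrm{F}}=\tr(Q_jP_iP_kQ_l)=\delta_{ik}\delta_{jl}\tr(P_iQ_j)$ the blocks $P_iQ_j$ are pairwise Frobenius-orthogonal, whence $|h(A)-h(B)|^2=\sum_{i,j}|h(\alpha_i)-h(\beta_j)|^2\tr(P_iQ_j)\le L^2\sum_{i,j}|\alpha_i-\beta_j|^2\tr(P_iQ_j)=L^2|A-B|^2$; applied to arbitrary $M,N\in B(M_0,r)$ this gives $|M^{-p}-N^{-p}|\le pc^{-p-1}\,|M-N|$, i.e.\ local Lipschitz continuity. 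The main obstacle in all of this is the second assertion, where one must first make sense of, and then differentiate, a non-integer power of a possibly non-symmetric matrix; this forces the use of the holomorphic functional calculus and of Cauchy's integral formula for derivatives, whereas the first and third assertions are routine once the right derivative is identified with the ordinary derivative.
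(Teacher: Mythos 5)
Your proposal is correct, and its core — the holomorphic functional calculus treatment of $(M+tE)^{-p}$, differentiation under the contour integral via $\partial_t R(z,t)=R(z,t)ER(z,t)$, and cyclicity of the trace — is the same route the paper takes for~\eqref{eq:A-criterion-derivative}. The differences are at the edges. For~\eqref{eq:D-criterion-derivative} you give a direct computation via $\log\det(M+tE)^{-1}=-\log\det M-\log\det(I+tM^{-1}E)$ and Jacobi's formula, where the paper simply cites the literature; your version is self-contained at no real cost. At the end of the second part you identify $\frac{1}{2\pi\imunit}\oint_\Gamma z^{-p}(zI-M)^{-2}\d z=-pM^{-p-1}$ by orthogonally diagonalizing $M$ and applying Cauchy's formula for the first derivative eigenvalue-wise, whereas the paper integrates by parts on the contour to get $\oint f(z)(z-M)^{-2}\d z=\oint f'(z)(z-M)^{-1}\d z=f'(M)$; both are valid, and the paper's identity has the minor advantage of not using symmetry of $M$. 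The genuinely different piece is the local Lipschitz continuity of $M\mapsto M^{-p}$: you exploit symmetry, writing $h(A)-h(B)=\sum_{i,j}(h(\alpha_i)-h(\beta_j))P_iQ_j$ over pairwise Frobenius-orthogonal blocks $P_iQ_j$ to transfer the scalar Lipschitz constant $pc^{-p-1}$ verbatim to the Frobenius norm, while the paper again runs a contour-integral argument with an explicit resolvent bound along a fixed path. Your argument yields a sharper constant and avoids the path construction, but is tied to normal (here symmetric) matrices; the paper's resolvent estimate is cruder but of the type that generalizes beyond the self-adjoint setting. Since all matrices in $\Rpd^{d\times d}$ are symmetric by the paper's conventions and all norms on $\R^{d\times d}$ are equivalent, both establish the claimed local Lipschitz continuity.
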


\begin{proof}
See \cite{MagnusNeudecker} (Theorem~8.2), for instance, 
for a proof of~\eqref{eq:D-criterion-derivative}. We now prove~\eqref{eq:A-criterion-derivative} because we could not find a proof for non-integer $p$ in the literature. 
So, let $p \in (0,\infty)$ and $M \in \Rpd^{d \times d}$ and $E \in \R^{d\times d}$ be fixed and write $M_t := M+tE$ for brevity. Also, let $f: \C \setminus (-\infty,0] \to \C$ be the holomorphic function defined by $f(z) := z^{-p} := \e^{-p \log(z)}$, where $\log: \C \setminus (-\infty,0] \to \C$ denotes the principal arc of the complex logarithm. In particular,
\begin{align}  \label{eq:directional-derivative-A-criterion,1}
f'(z) = -pz^{-p-1} \qquad (z \in \C \setminus (-\infty,0]).
\end{align}
Choose now $t_0 > 0$ and $\delta > 0$ so small that $M_t + \delta  \in \Rpd^{d \times d}$ for every $t \in [0,t_0]$ (Lemma~\ref{lm:rint(pos-def-matrices)}). It then follows that the spectrum $\sigma(M_t)$ of $M_t$ is contained in $ [\delta,\infty) \subset \C \setminus (-\infty,0]$ for all $t \in [0,t_0]$ and that there exists a 
closed path $\gamma$ in $\C \setminus (-\infty,0]$ that has winding number $1$ around $\sigma(M_t)$ and winding number $0$ around the complement $\C \setminus \sigma(M_t)$. 
So, by the holomorphic functional calculus~\cite{DunfordSchwartz} (Section~VII.3), \cite{Rudin} (Section 10.23)  and~\eqref{eq:directional-derivative-A-criterion,1} that
\begin{align}
(M+tE)^{-p} = f(M_t) 
&= \frac{1}{2\pi \imunit} \int_{\gamma}f(z)(z-M_t)^{-1}\d z 
\label{eq:directional-derivative-A-criterion,2}\\
\frac{1}{2\pi \imunit} \int_{\gamma}f(z)(z-M_t)^{-2}\d z 
&= \frac{1}{2\pi \imunit} \int_{\gamma}f'(z)(z-M_t)^{-1}\d z = f'(M_t) \notag\\
&= -p(M+tE)^{-p-1}
\label{eq:directional-derivative-A-criterion,3}
\end{align}
for all $t \in [0,t_0]$, where the first equality in~\eqref{eq:directional-derivative-A-criterion,3} follows by Cauchy's theorem. It further follows from~\eqref{eq:directional-derivative-A-criterion,2} by taking the derivative inside the path integral (dominated convergence theorem) that
\begin{align} \label{eq:directional-derivative-A-criterion,4}
\ddt (M+tE)^{-p} \big|_{t=0+} 
&= \frac{1}{2\pi \imunit} \int_{\gamma}f(z)\ddt (z-M_t)^{-1} \big|_{t=0+} \d z, \notag \\
&= \frac{1}{2\pi \imunit} \int_{\gamma}f(z)(z-M)^{-1} E (z-M)^{-1}\d z
\end{align}
So, by taking the trace in~\eqref{eq:directional-derivative-A-criterion,4} and using its invariance under cyclic permutations in conjunction with~\eqref{eq:directional-derivative-A-criterion,3} at $t = 0$, we see that
\begin{align}
\ddt \tr((M+tE)^{-p}) \big|_{t=0+} &= \frac{1}{2\pi \imunit} \int_{\gamma}f(z)\tr\big((z-M)^{-1} E (z-M)^{-1}\big)\d z \notag\\
&= -p\tr(M^{-p-1}E),
\end{align}
as desired. 
It remains to establish the local Lipschitz continuity of $\Rpd^{d \times d} \ni M \mapsto M^{-p}$. So, let $\mathcal{M}$ be a compact subset of $\Rpd^{d \times d}$. It then follows that 
\begin{align} \label{eq:M^-p-Lipschitz,1}
\mathcal{M} \subset \{M \in \Rpd^{d \times d}: c \le M \le C\}
\end{align}
for some positive constants $c, C \in (0,\infty)$. It further follows by the holomorphic functional calculus that
\begin{align} \label{eq:M^-p-Lipschitz,2}
M^{-p} - N^{-p} = \frac{1}{2\pi \imunit} \int_{\gamma} z^{-p} \big( (z-M)^{-1} - (z-N)^{-1} \big) \d z
\qquad (M,N \in \mathcal{M})
\end{align}
for the tank-shaped closed path $\gamma$ in $\C \setminus (-\infty,0]$ that connects the four points $c+\imunit c$, $C+\imunit c$, $C - \imunit c$, $c -\imunit c$ by two horizontal straight lines and, respectively, by two vertically oriented semi-circles. 
Clearly, the length of that path is
\begin{align} \label{eq:M^-p-Lipschitz,3}
l(\gamma) = 2(C-c) + 2\pi c/2 = 2(C-c) + \pi c
\end{align}
and, moreover, the operator norm of the inverse of the matrices $z-A$ for $z$ on the path $\gamma$ and $A \in \mathcal{M}$ can be estimated as 
\begin{align} \label{eq:M^-p-Lipschitz,4}
\norm{(z-A)^{-1}} = \frac{1}{\dist(z,\sigma(A))} \le \frac{1}{\dist(z,[c,C])} \le \frac{2}{c}
\qquad (z \in \ran(\gamma) \text{ and } A \in \mathcal{M})
\end{align}
because the matrices $A \in \mathcal{M}$ are normal matrices with spectrum $\sigma(A) \subset [c,C]$ by~\eqref{eq:M^-p-Lipschitz,1}. Since $(z-M)^{-1} - (z-N)^{-1} = (z-M)^{-1}(N-M)(z-N)^{-1}$ we see from~\eqref{eq:M^-p-Lipschitz,2} with the help of~\eqref{eq:M^-p-Lipschitz,3} and~\eqref{eq:M^-p-Lipschitz,4} that
\begin{align} \label{eq:M^-p-Lipschitz,5}
\norm{M^{-p} - N^{-p}} 
\le \frac{2(C-c) + \pi c}{2\pi} (2/c)^{p+2} \norm{N-M}
\qquad (M,N \in \mathcal{M}).
\end{align}
And therefore, $\mathcal{M} \ni M \mapsto M^{-p}$ is Lipschitz continuous, as desired. 
\end{proof}

We finally turn to the notions of $L$-smoothness and $\mu$-strong convexity~\cite{Vi83}, 
which are essential for our convergence rate results. Suppose $\Psi: C \to \R$ is a real-valued function on some convex subset $C$ of a normed real vector space $V$ with norm $\norm{\cdot}$. Also, let $L \in [0,\infty)$ and $\mu \in [0,\infty)$ be given numbers. $\Psi$ is then called $L$-smooth w.r.t.~$\norm{\cdot}$ 
iff $\Psi$ is differentiable in the sense defined above and 
\begin{align} \label{eq:L-smoothness-def}
\Psi(y)-\Psi(x)-D\Psi(x)(y-x) \le \frac{L}{2}\norm{y-x}^2 \qquad (x,y \in C).
\end{align} 
Similarly, $\Psi$ is called $\mu$-strongly convex w.r.t.~$\norm{\cdot}$ 
iff $\Psi$ is differentiable in the sense defined above and
\begin{align} \label{eq:mu-strong-convexity-def}
\Psi(y)-\Psi(x)-D\Psi(x)(y-x) \ge \frac{\mu}{2}\norm{y-x}^2 \qquad (x,y \in C).
\end{align}
It should be noted that by the assumed convexity of $C$, the derivatives $D\Psi(x)(y-x) = \partial_{y-x}\Psi(x)$ appearing in the defining relations~\eqref{eq:L-smoothness-def} and~\eqref{eq:mu-strong-convexity-def} are actually independent of the chosen differentiable extension $\tilde{\Psi}$ of $\Psi$. See the remarks following~\eqref{eq:derivative-def}. We close with a simple sufficient condition for $L$-smoothness. 

\begin{lm} \label{lm:L-smoothness-sufficient-condition}
Suppose that $\Psi: \Rpsd^{d \times d} \to \R \cup \{\infty\}$ is a mapping such that $\Psi|_{\dom\Psi}$ is  differentiable. Suppose further that $\mathcal{M} \subset \dom\Psi$ is a convex subset and the derivative mapping $\mathcal{M} \ni M \mapsto D\Psi(M)$ restricted to $\mathcal{M}$ is Lipschitz continuous, that is, there exists an $L \in [0,\infty)$ such that
\begin{align} \label{eq:derivative-lipschitz-on-M}
\big| \big( D\Psi(M') - D\Psi(M) \big)E \big| \le L |M'-M| |E|
\qquad (M,M' \in \mathcal{M} \text{ and } E \in \R^{d\times d}).
\end{align}
Then $\Psi|_{\mathcal{M}}$ is $L$-smooth w.r.t.~$|\cdot|$. 
\end{lm}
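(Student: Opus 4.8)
The plan is to run the classical descent-lemma argument --- write the increment of $\Psi$ along the segment from $M$ to $M'$ as an integral of the derivative, subtract the first-order term, and estimate the remainder with the Lipschitz bound~\eqref{eq:derivative-lipschitz-on-M} --- taking some care with the notion of differentiability on the non-open set $\mathcal{M}$. Fix $M, M' \in \mathcal{M}$ and put $M_t := M + t(M'-M)$; by convexity of $\mathcal{M}$ one has $M_t \in \mathcal{M} \subset \dom\Psi$ for all $t \in [0,1]$. Since $\Psi|_{\dom\Psi}$ is differentiable, it extends to a Fr\'echet-differentiable function $\tilde{\Psi}$ on an open set $\tilde{S} \supseteq \dom\Psi$, and because the compact segment $\{M_t : t \in [0,1]\}$ is contained in $\dom\Psi \subseteq \tilde{S}$, there is an $\eps > 0$ with $M_t \in \tilde{S}$ for all $t \in (-\eps, 1+\eps)$. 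Hence $g(t) := \tilde{\Psi}(M_t)$ is differentiable on $(-\eps, 1+\eps)$ with $g'(t) = D\tilde{\Psi}(M_t)(M'-M)$ by the chain rule; and for $t \in [0,1]$, since $M_t$ lies in the convex set $\mathcal{M}$, the remark following~\eqref{eq:derivative-def} shows that $D\tilde{\Psi}(M_t)(M'-M)$ equals the intrinsic value $D\Psi(M_t)(M'-M)$, which is moreover continuous in $t$ (composition of the Lipschitz map $M \mapsto D\Psi(M)$ on $\mathcal{M}$ with $t \mapsto M_t$). In particular $g|_{[0,1]}$ is $C^1$.

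Next I would apply the fundamental theorem of calculus to $g$ on $[0,1]$ and subtract the linear term, obtaining
\begin{align*}
\Psi(M') - \Psi(M) - D\Psi(M)(M'-M) = \int_0^1 \big( D\Psi(M_t) - D\Psi(M) \big)(M'-M) \d t.
\end{align*}
Applying~\eqref{eq:derivative-lipschitz-on-M} to the pair $M_t, M \in \mathcal{M}$ with direction $E := M'-M$, and using $|M_t - M| = t\,|M'-M|$, the integrand is bounded in absolute value by $L\,|M_t - M|\,|M'-M| = L t\,|M'-M|^2$. Since $\int_0^1 t \d t = 1/2$, this yields
\begin{align*}
\Psi(M') - \Psi(M) - D\Psi(M)(M'-M) \le \frac{L}{2}\,|M'-M|^2,
\end{align*}
which is exactly~\eqref{eq:L-smoothness-def} for $\Psi|_{\mathcal{M}}$ with respect to $|\cdot|$.

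The only genuinely delicate point is the bookkeeping in the first paragraph: one must make sure that the auxiliary function $g$ is differentiable on a neighborhood of $[0,1]$ with a continuous derivative, and that this derivative agrees on $[0,1]$ with the intrinsic derivative $D\Psi(M_t)(M'-M)$ appearing in the statement. Both facts follow from the convexity of $\mathcal{M}$ together with the segment-independence remark after~\eqref{eq:derivative-def} and the assumed Lipschitz continuity of $M \mapsto D\Psi(M)$ on $\mathcal{M}$; once this is settled, the remaining steps are the routine computation above, and no regularity beyond the hypotheses of the lemma is needed.
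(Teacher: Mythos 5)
Your proposal is correct and follows essentially the same route as the paper's proof: parametrize the segment $M_t = M + t(M'-M)$, integrate the derivative along it, subtract the first-order term, and bound the remainder by $L\int_0^1 t\,\mathrm{d}t\,|M'-M|^2 = \tfrac{L}{2}|M'-M|^2$ using \eqref{eq:derivative-lipschitz-on-M}. The extra bookkeeping you supply about the differentiable extension $\tilde{\Psi}$ and the extension-independence of $D\Psi(M_t)(M'-M)$ on the convex set $\mathcal{M}$ is exactly what the paper compresses into its first sentence, so nothing is missing.
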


\begin{proof}
Since $\Psi|_{\dom\Psi}$ is differentiable and $\mathcal{M}$ is a convex subset of $\dom\Psi$ by assumption, it follows by~\eqref{eq:derivative-lipschitz-on-M} that $[0,1] \ni t \mapsto \Psi(M+t(N-M))$ is continuously differentiable for all $M,N \in \mathcal{M}$. So, by the mean value theorem and~\eqref{eq:derivative-lipschitz-on-M}, we conclude 
\begin{align}
\Psi(N) - \Psi(M) - D\Psi(M)(N-M) 
&\le \int_0^1 \big|\big(D\Psi(M+t(N-M)) - D\Psi(M)\big)(N-M)\big| \d t \notag\\
&\le L \int_0^1 t \d t \cdot |N-M|^2 = \frac{L}{2} |N-M|^2
\end{align}
for all $M,N \in \mathcal{M}$. And therefore, $\Psi|_{\mathcal{M}}$ is $L$-smooth w.r.t.~$|\cdot|$, as desired.
\end{proof}

\subsubsection{Support and convergence of probability measures}

As usual, a point $x$ of some metric space $X$ is called a support point of the probability measure $\xi \in \Xi(X)$ iff $\xi(B_{\delta}(x)) > 0$ for every $\delta > 0$ (where $B_{\delta}(x)$ denotes the open $\delta$-ball around $x$ in $X$, of course). And accordingly, the set of all support points of a measure $\xi \in \Xi(X)$ is called its support, in short: 
\begin{align}
\supp \xi := \{x \in X: x \text{ is a support point of } \xi\}.
\end{align}

\begin{lm} \label{lm:support}
Suppose that $X$ is a compact metric space and let $\xi \in \Xi(X)$. Then $\supp \xi$ is the largest closed subset $C$ of $X$ with $\xi(X \setminus C) = 0$. In particular,
\begin{align} \label{eq:support}
\xi(A) = \xi(A \cap \supp \xi) \qquad (A \in \mathcal{B}_X).
\end{align} 
Additionally, if $\supp \xi$ is finite, then $\xi(\{x\}) > 0$ for every $x \in \supp \xi$ and 
\begin{align} \label{eq:repr-of-measures-with-finite-supp}
\xi = \sum_{x\in\supp \xi} \xi(\{x\}) \delta_x.
\end{align}
\end{lm}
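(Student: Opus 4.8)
The plan is to establish the three assertions in turn, the only real input being that a compact metric space is second countable and hence Lindel\"of. \emph{First}, I would check that $U := X \setminus \supp\xi$ is open (so that $\supp\xi$ is closed): if $x \in U$, then by definition of the support there is a $\delta > 0$ with $\xi(B_\delta(x)) = 0$, and the triangle inequality gives $B_{\delta/2}(y) \subseteq B_\delta(x)$ for every $y \in B_{\delta/2}(x)$, so $\xi(B_{\delta/2}(y)) = 0$ and a whole ball around $x$ lies in $U$. \emph{Second} — and this is the step that uses the hypothesis on $X$ — I would show $\xi(U) = 0$: the $\xi$-null balls $B_{\delta_x}(x)$ just produced cover $U$, and since $X$ is compact metric it is second countable, hence Lindel\"of, so this cover admits a countable subcover; countable subadditivity then forces $\xi(U) = 0$.

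For the maximality claim I would take an arbitrary closed $C \subseteq X$ with $\xi(X \setminus C) = 0$ and observe that every $x \in X \setminus C$ has an open ball contained in the open set $X \setminus C$, which is therefore $\xi$-null, so $x \notin \supp\xi$; hence $X \setminus C \subseteq X \setminus \supp\xi$, i.e.\ $\supp\xi \subseteq C$. Combined with the first part, this says $\supp\xi$ is the largest closed set with the stated null property. The displayed identity $\xi(A) = \xi(A \cap \supp\xi)$ then follows at once by splitting $\xi(A) = \xi(A \cap \supp\xi) + \xi(A \setminus \supp\xi)$ and using $\xi(A \setminus \supp\xi) \le \xi(U) = 0$.

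For the finite case I would fix $x \in \supp\xi$ and exploit that $\supp\xi$ is a finite subset of a metric space: choosing $0 < \delta \le \dist(x, \supp\xi \setminus \{x\})$ (with $\delta$ arbitrary if $\supp\xi = \{x\}$), which is possible since this distance is positive, one gets $B_\delta(x) \cap \supp\xi = \{x\}$, and then \eqref{eq:support} gives $\xi(\{x\}) = \xi(B_\delta(x) \cap \supp\xi) = \xi(B_\delta(x)) > 0$, the positivity being precisely the defining property of the support point $x$. Finally, to get $\xi = \sum_{x \in \supp\xi} \xi(\{x\})\delta_x$ I would test both sides on an arbitrary $A \in \mathcal{B}_X$: by \eqref{eq:support} and finite additivity over the finite set $\supp\xi$, $\xi(A) = \xi(A \cap \supp\xi) = \sum_{x \in A \cap \supp\xi} \xi(\{x\}) = \sum_{x \in \supp\xi} \xi(\{x\})\,\delta_x(A)$.

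The only non-routine point is the passage to a countable subcover in the second step; it is exactly there that the assumption on $X$ enters, and once separability (equivalently, second countability) of $X$ is invoked, everything else is a direct unwinding of the definitions of support point and of a point measure.
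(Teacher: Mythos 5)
Your proof is correct. The second half (the finite-support case) is essentially identical to the paper's argument: isolate each $x \in \supp\xi$ with a ball $B_\delta(x)$ satisfying $B_\delta(x) \cap \supp\xi = \{x\}$, apply \eqref{eq:support} together with the defining positivity $\xi(B_\delta(x)) > 0$, and then use finite additivity over $\supp\xi$ to get \eqref{eq:repr-of-measures-with-finite-supp}. The difference is in the first half: the paper disposes of the claim that $\supp\xi$ is the largest closed set of full measure by citing Theorem~II.2.1 of Parthasarathy (after noting that a compact metric space is separable), whereas you prove it directly -- openness of the complement via the triangle inequality, $\xi(X \setminus \supp\xi) = 0$ via second countability, the Lindel\"of property, and countable subadditivity, and maximality by the elementary ball argument. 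This is precisely the standard proof of the cited theorem, so the mathematical content is the same; what your version buys is self-containedness, and it correctly identifies separability (equivalently second countability) of $X$ as the only point where the hypothesis on $X$ is genuinely used.
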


\begin{proof}
As a compact metric space, $X$ is in particular separable. 
Consequently, the first part of the lemma (up until~\eqref{eq:support}) follows from Theorem~II.2.1 of~\cite{Parthasarathy} (note that a measure in~\cite{Parthasarathy} is always meant to be a probability measure, see the footnote in Section~II.1 of~\cite{Parthasarathy}). 
Suppose now that $\supp \xi$ is finite. Then, for every $x \in \supp \xi$, there is a $\delta > 0$ such that $\{x\} = B_{\delta}(x) \cap \supp \xi$ and therefore, by~\eqref{eq:support} and the definition of support points, 
\begin{align*}
\xi(\{x\}) = \xi\big(B_{\delta}(x) \cap \supp \xi \big) =  \xi\big(B_{\delta}(x) \big) > 0.
\end{align*}
Additionally, it follows by~\eqref{eq:support} that
\begin{align*}
\xi(A) = \xi(A \cap \supp \xi) = \sum_{x\in\supp\xi} \xi(A \cap \{x\}) = \sum_{x\in\supp\xi} \xi(\{x\}) \delta_x(A) \qquad (A \in \mathcal{B}_X)
\end{align*}
and therefore~\eqref{eq:repr-of-measures-with-finite-supp} holds true. 
\end{proof}

We recall that a sequence $(\xi_n)$ of probability measures in $\Xi(X)$ with some metric space $X$ is said to converge weakly (or in distribution) to a probability measure $\xi \in \Xi(X)$ iff 
\begin{align}
\int_X m(x) \d\xi_n(x) \longrightarrow \int_X m(x)\d\xi(x) \qquad (n\to\infty)
\end{align}  
for every $m \in C_{\mathrm{b}}(X,\R) := \{\text{bounded continuous functions } X \to \R\}$ (Section~II.6 of~\cite{Parthasarathy}). 

\begin{lm}[Prohorov] \label{lm:prohorov}
If $X$ is a compact metric space, then $\Xi(X)$ endowed with the weak topology 
is a metrizable compact space. In particular, $\Xi(X)$ is sequentially compact. 
\end{lm}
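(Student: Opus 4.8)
The plan is to identify $\Xi(X)$ with a subset of the topological dual of $C(X,\R)$ and then invoke the Banach--Alaoglu theorem together with the separability of $C(X,\R)$. First I would note that, since $X$ is compact, the bounded continuous functions on $X$ are precisely the continuous functions, so $C_{\mathrm b}(X,\R) = C(X,\R)$; hence, by the very definition of weak convergence recalled above, the weak topology on $\Xi(X)$ is the initial topology induced by the evaluation maps $\xi \mapsto \int_X m\,\d\xi$ ($m \in C(X,\R)$). Equivalently, viewing each $\xi \in \Xi(X)$ as the bounded linear functional $m \mapsto \int_X m\,\d\xi$ on $C(X,\R)$ --- an injective assignment, because a Borel probability measure on a metric space is determined by its integrals against continuous functions (closed sets are decreasing pointwise limits of continuous functions $x \mapsto \max(0,1-n\,\dist(x,F))$) --- the weak topology on $\Xi(X)$ is exactly the restriction of the weak$^*$ topology $\sigma(C(X,\R)^*,C(X,\R))$ to the image of $\Xi(X)$.

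Second, I would use two standard facts about $C(X,\R)$ for compact metric $X$: it is a separable Banach space under the supremum norm (a countable uniformly dense subset can be produced from a countable dense subset of $X$ via the Stone--Weierstrass theorem), and, by the Riesz--Markov representation theorem, its dual is the space of finite signed Radon measures on $X$ under the pairing $\langle\mu,m\rangle = \int_X m\,\d\mu$. Under this identification $\Xi(X)$ corresponds to the set $P := \{\mu \in C(X,\R)^* : \langle\mu,1\rangle = 1 \text{ and } \langle\mu,m\rangle \ge 0 \text{ for all } m \in C(X,\R) \text{ with } m \ge 0\}$ of normalized positive functionals: the inclusion $\Xi(X) \subseteq P$ is immediate, and the reverse inclusion is precisely the Riesz--Markov theorem. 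Every $\mu \in P$ has norm $1$ (from $-1 = \langle\mu,-1\rangle \le \langle\mu,m\rangle \le \langle\mu,1\rangle = 1$ for $\norm{m} \le 1$), so $P$ is contained in the closed unit ball $B^*$ of $C(X,\R)^*$; by Banach--Alaoglu, $B^*$ is weak$^*$ compact, and since $C(X,\R)$ is separable the weak$^*$ topology on $B^*$ is metrizable, e.g.\ by $d(\mu,\nu) := \sum_{k} 2^{-k}\, |\langle\mu,m_k\rangle - \langle\nu,m_k\rangle|$ for a dense sequence $(m_k)$ in the unit ball of $C(X,\R)$. Finally, $P$ is weak$^*$ closed in $C(X,\R)^*$, being the intersection of the weak$^*$ closed sets $\{\langle\cdot,1\rangle = 1\}$ and $\{\langle\cdot,m\rangle \ge 0\}$ over $m \ge 0$.

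Putting these together, $\Xi(X) \cong P$ is a weak$^*$ closed subset of the weak$^*$ compact metrizable set $B^*$, hence itself compact and metrizable in the weak$^*$ topology --- which on $\Xi(X)$ coincides with the weak topology by the first step. A compact metrizable space is sequentially compact, which gives the last assertion. The only genuinely substantive ingredients are the Riesz--Markov representation theorem (which is what makes a weak$^*$ limit of probability measures again a probability measure, i.e.\ $P = \Xi(X)$) and the separability of $C(X,\R)$ (which gives metrizability); everything else is routine bookkeeping, so this is where I would expect to have to be careful rather than in any single hard step. Alternatively --- and this is presumably the shortest route for the paper --- one may simply cite \cite{Parthasarathy} (Section~II.6), where exactly this statement is proved (for compact, and more generally separable complete, metric spaces).
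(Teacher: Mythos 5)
Your argument is correct, and it is essentially the standard proof underlying the result you end up citing. The paper itself does not give this argument: its proof consists of a single citation of Theorem~II.6.4 of Parthasarathy together with the remark that compactness and sequential compactness coincide for metric spaces (so your closing sentence identifies exactly the route the paper takes). What you supply in addition is a self-contained functional-analytic derivation: embed $\Xi(X)$ into $C(X,\R)^*$ (injectively, since Borel probability measures on a metric space are determined by integrals against continuous functions), identify its image with the weak$^*$ closed set of normalized positive functionals via Riesz--Markov, and conclude compactness and metrizability from Banach--Alaoglu plus the separability of $C(X,\R)$. All the individual steps check out --- the norm bound $\norm{\mu}=1$ on $P$, the weak$^*$ closedness of $P$, the metrizability of the dual unit ball for separable predual, and the observation that for compact $X$ the weak topology of measures is the restricted weak$^*$ topology because $C_{\mathrm b}(X,\R)=C(X,\R)$. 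The trade-off is the usual one: your version makes the lemma independent of Parthasarathy at the cost of invoking Riesz--Markov and Banach--Alaoglu, whereas the paper keeps the proof to one line by outsourcing precisely this argument.
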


\begin{proof}
Invoke Theorem~II.6.4 of \cite{Parthasarathy} and recall that for metric spaces, compactness and sequential compactness are equivalent (Theorem~III.3.4 of \cite{AmannEscher}). 
\end{proof}

\subsection{Common design criteria}


In this section, we recall the 
design criteria $\Psi$ 
that are most commonly used in practice and record their monotonicity, convexity, continuity, and differentiability properties. 
In particular, we establish the strong convexity of a large class of practically relevant design criteria. 


\subsubsection{Simple design criteria}


We begin by discussing a scale of design criteria $\Psi_p$ that comprises the D- and the A-criterion as special cases. Specifically, $\Psi_p$ is defined by
\begin{align} \label{eq:Psi_p-def}
\Psi_0(M) := \log \det(M^{-1})
\qquad \text{and} \qquad
\Psi_p(M) := (\tr(M^{-p}))^{1/p}
\qquad (p \in (0,\infty))
\end{align}
for $M \in \Rpd^{d \times d}$ and by
\begin{align} \label{eq:Psi_p-def-2}
\Psi_p(M) := \infty \qquad (p \in [0,\infty))
\end{align}
for $M \in \Rpsd^{d \times d} \setminus \Rpd^{d \times d}$. Commonly, $\Psi_0$ and $\Psi_1$ are called the (logarithmic) D-criterion and the A-criterion, respectively. With the help of a spectral decomposition of a given $M \in \Rpd^{d \times d}$, we immediately see that
\begin{align} \label{eq:Psi_p-eigenvalue-representation}
\Psi_0(M) = -\sum_{i=1}^d \log(\lambda_i(M))
\qquad \text{and} \qquad
\Psi_p(M) = \bigg( \sum_{i=1}^d \lambda_i(M)^{-p} \bigg)^{1/p},
\end{align}
where $\lambda_1(M), \dots, \lambda_d(M)$ are the eigenvalues of $M$ counted according to their multiplicities and ordered increasingly. It is also easy to verify that $(1/d) \Psi_0(M) = \lim_{p\searrow 0} \log\big((1/d)^{1/p} \Psi_p(M)\big)$ 
(just calculate the derivative of $[0,\infty) \ni p \mapsto \log\big((1/d) \sum_{i=1}^d \lambda_i(M^{-1})^p$ at $p =0$).

\begin{lm} \label{lm:upper-bound-on-Psi(M)-implies-lower-bound-on-M}
Suppose that $p \in [0,\infty)$ and $R \in (0,\infty)$. If $M \in \R^{d\times d}_{\mathrm{psd}}$ with $\Psi_p(M) \le R$, then 
\begin{align}
M \ge\mu_{p,R}(\norm{M}),
\end{align}
where $\mu_{0,R}(\norm{M}) := \e^{-R} \norm{M}^{-(d-1)}$ and $\mu_{p,R}(\norm{M}) := 1/R$ for $p \in (0,\infty)$.
\end{lm}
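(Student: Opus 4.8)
The plan is to reduce everything to the eigenvalue representation~\eqref{eq:Psi_p-eigenvalue-representation}, work out the constraint $\Psi_p(M)\le R$ eigenvalue by eigenvalue, and then convert a lower bound on the smallest eigenvalue $\lambda_1(M)$ into the Löwner bound $M\ge\lambda_1(M)\cdot I$. Note first that $\Psi_p(M)\le R<\infty$ forces $M\in\Rpd^{d\times d}$ by~\eqref{eq:Psi_p-def-2}, so all eigenvalues are strictly positive and $\norm{M}=\lambda_d(M)$ is the largest eigenvalue. Since for any positive definite $M$ one has $M-\lambda_1(M)I\in\Rpsd^{d\times d}$, it suffices to show $\lambda_1(M)\ge\mu_{p,R}(\norm{M})$ in each of the two cases.

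\emph{Case $p\in(0,\infty)$.} From~\eqref{eq:Psi_p-eigenvalue-representation} we have $\sum_{i=1}^d\lambda_i(M)^{-p}=\Psi_p(M)^p\le R^p$. Dropping all terms but $i=1$ gives $\lambda_1(M)^{-p}\le R^p$, hence $\lambda_1(M)\ge 1/R=\mu_{p,R}(\norm{M})$. This case is immediate and requires no further work.

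\emph{Case $p=0$.} Here $\Psi_0(M)=-\sum_{i=1}^d\log\lambda_i(M)\le R$, i.e.\ $\sum_{i=1}^d\log\lambda_i(M)\ge -R$, i.e.\ $\prod_{i=1}^d\lambda_i(M)\ge\e^{-R}$. Bounding the $d-1$ largest eigenvalues above by $\lambda_d(M)=\norm{M}$ gives $\lambda_1(M)\cdot\norm{M}^{d-1}\ge\prod_{i=1}^d\lambda_i(M)\ge\e^{-R}$, so $\lambda_1(M)\ge\e^{-R}\norm{M}^{-(d-1)}=\mu_{0,R}(\norm{M})$, as desired.

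\emph{Main obstacle.} Honestly there is no serious obstacle — the statement is a direct unpacking of the eigenvalue formulas plus the elementary fact $M\ge\lambda_{\min}(M)I$. The only points needing a word of care are: (i) noting that finiteness of $\Psi_p(M)$ already puts us in $\Rpd^{d\times d}$ so that the spectral arguments apply and $\norm{M}$ really is the top eigenvalue; and (ii) in the $p=0$ case, being careful that the $d-1$ factors bounded by $\norm{M}$ are precisely $\lambda_2(M),\dots,\lambda_d(M)$, which is exactly the $d-1$ of them that are $\le\norm{M}$ — consistent with the exponent $d-1$ in the claimed $\mu_{0,R}$.
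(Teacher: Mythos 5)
Your proof is correct and follows essentially the same route as the paper's: both cases are handled via the eigenvalue representation~\eqref{eq:Psi_p-eigenvalue-representation}, isolating the smallest eigenvalue (for $p>0$ by dropping the other terms in the sum, for $p=0$ by bounding the remaining $d-1$ eigenvalues by $\norm{M}$, which is what the paper does in logarithmic form), and concluding with $M\ge\lambda_{\min}(M)$. No issues.
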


\begin{proof}
Suppose first that $p = 0$ and that $M \in \Rpsd^{d \times d}$ with $\Psi_0(M) \le R < \infty$. It then follows by~\eqref{eq:Psi_p-def}-\eqref{eq:Psi_p-eigenvalue-representation} that $M \in \Rpd^{d \times d}$ and 
\begin{align}
\log\lambda_i(M) \ge -R -\sum_{j\ne i} \log\lambda_j(M) \ge -R -(d-1)\log\norm{M}
\qquad (i\in\{1,\dots,d\}). 
\end{align}
And therefore we obtain the claimed estimate in the case $p = 0$ as follows:
\begin{align}
M \ge \min\{\lambda_i(M): i \in \{1,\dots,d\}\} \ge \e^{-R} \norm{M}^{-(d-1)} =\mu_{0,R}(\norm{M}),
\end{align}
Suppose now that $p \in (0,\infty)$ and that $M \in \Rpsd^{d \times d}$ with $\Psi_p(M) \le R < \infty$. It then follows by~\eqref{eq:Psi_p-def}-\eqref{eq:Psi_p-eigenvalue-representation} that $M \in \Rpd^{d \times d}$ and
\begin{align}
1/\lambda_i(M) \le \bigg( \sum_{i=1}^d \lambda_i(M)^{-p} \bigg)^{1/p} \le R
\qquad (i\in\{1,\dots,d\}). 
\end{align}
And therefore we obtain 
\begin{align}
M \ge \min\{\lambda_i(M): i \in \{1,\dots,d\}\} \ge 1/R =\mu_{p,R}(\norm{M}),
\end{align}
which is the claimed estimate in the case $p \in (0,\infty)$. 
\end{proof}

\begin{prop} \label{prop:standard-properties-for-Psi_p}
Suppose that $p \in [0,\infty)$. Then $\Psi_p$ is antitonic, strictly convex, and lower semicontinuous with $\dom \Psi_p = \R^{d\times d}_{\mathrm{pd}}$. Additionally, the restriction $\Psi_p|_{\R^{d\times d}_{\mathrm{pd}}}$ is 
continuously differentiable with derivative given by
\begin{gather}
D\Psi_0(M)E = -\tr(M^{-1}E)
\label{eq:directional-derivative-p=0}\\
D\Psi_p(M)E = -\big(\tr(M^{-p})\big)^{1/p-1} \tr(M^{-p-1}E) \qquad (p\in(0,\infty))
\label{eq:directional-derivative-p>0}
\end{gather}
for $M \in \R^{d\times d}_{\mathrm{pd}}$ and $E \in \R^{d\times d}$. In particular, the derivative map $\Rpd^{d \times d} \ni M \mapsto D\Psi_p(M)$ is locally Lipschitz continuous.
\end{prop}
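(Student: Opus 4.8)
\emph{Overview.} The plan is to dispatch the structural properties first ($\dom\Psi_p$, antitonicity, lower semicontinuity), then obtain differentiability and the local Lipschitz continuity of the derivative almost directly from Lemma~\ref{lm:directional-derivatives}, and finally prove convexity and strict convexity, which is the substantive part. That $\dom\Psi_p=\Rpd^{d\times d}$ is immediate from \eqref{eq:Psi_p-def}--\eqref{eq:Psi_p-def-2}. For antitonicity I would use the eigenvalue representation \eqref{eq:Psi_p-eigenvalue-representation}: if $M\preceq N$ with $M,N\in\Rpd^{d\times d}$, then $\lambda_i(M)\le\lambda_i(N)$ for every $i$ by Weyl monotonicity, and $t\mapsto -\log t$ as well as $\lambda\mapsto(\sum_i\lambda_i^{-p})^{1/p}$ are decreasing in each argument, so $\Psi_p(M)\ge\Psi_p(N)$; if $M$ is only positive semidefinite, $\Psi_p(M)=\infty\ge\Psi_p(N)$ trivially. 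For lower semicontinuity it suffices that every sublevel set $\{\Psi_p\le R\}$ be closed: continuity of $\Psi_p$ on $\Rpd^{d\times d}$ is clear (composition of continuous maps), and if $M_k\to M_0$ with $\Psi_p(M_k)\le R$, then Lemma~\ref{lm:upper-bound-on-Psi(M)-implies-lower-bound-on-M} gives $M_k\ge\mu_{p,R}(\norm{M_k})$; since $\norm{M_k}$ is bounded and $\mu_{p,R}(\cdot)$ is bounded below by a positive constant on bounded sets, $M_0\succeq cI\succ 0$ for some $c>0$, so $M_0\in\Rpd^{d\times d}$ and $\Psi_p(M_0)=\lim_k\Psi_p(M_k)\le R$.

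\emph{Differentiability and Lipschitz continuity of the derivative.} Applying \eqref{eq:D-criterion-derivative} and \eqref{eq:A-criterion-derivative} to both $E$ and $-E$ (legitimate since $\Rpd^{d\times d}$ is open) shows that $\Psi_p$ possesses at every $M\in\Rpd^{d\times d}$ a two-sided (Gâteaux) derivative in every direction $E$: for $p=0$ it equals $-\tr(M^{-1}E)$, and for $p>0$ the chain rule for $\Psi_p=g\circ h$ with $h(M)=\tr(M^{-p})$ and $g(s)=s^{1/p}$ gives $-(\tr M^{-p})^{1/p-1}\tr(M^{-p-1}E)$. In each case this is linear in $E$ with coefficient matrix a continuous (indeed, by the last part of Lemma~\ref{lm:directional-derivatives}, locally Lipschitz) function of $M$; continuity of the Gâteaux derivative on the open set $\Rpd^{d\times d}$ upgrades it to continuous Fréchet differentiability, giving \eqref{eq:directional-derivative-p=0}--\eqref{eq:directional-derivative-p>0}. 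Identifying $D\Psi_p(M)$ with its representing matrix under the Frobenius pairing, local Lipschitz continuity of $M\mapsto D\Psi_p(M)$ then follows from local Lipschitz continuity of $M\mapsto M^{-p}$ and $M\mapsto M^{-p-1}$ (Lemma~\ref{lm:directional-derivatives}), the fact that $\tr(M^{-p})$ stays in a compact subset of $(0,\infty)$ on compact subsets of $\Rpd^{d\times d}$, and stability of local Lipschitz continuity under sums, products, and composition with locally Lipschitz functions.

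\emph{Convexity and strict convexity.} Here I would split into cases. For $p\ge 1$ I would use that $\Psi_p(M)=\norm{M^{-1}}_{S^p}$, the Schatten-$p$ norm of $M^{-1}$. Since $M\mapsto M^{-1}$ is operator convex, the Schatten-$p$ norm is monotone on $\Rpsd^{d\times d}$ w.r.t.\ the Löwner order, and it is a genuine norm for $p\ge 1$, the chain
\[
\Psi_p(\lambda M+(1-\lambda)N)=\norm{(\lambda M+(1-\lambda)N)^{-1}}_{S^p}\le\norm{\lambda M^{-1}+(1-\lambda)N^{-1}}_{S^p}\le\lambda\Psi_p(M)+(1-\lambda)\Psi_p(N)
\]
gives convexity. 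For strict (mid-point) convexity the key auxiliary fact is the strict operator convexity of the inverse: for $M\ne N$ in $\Rpd^{d\times d}$ one has $\big(\tfrac{M+N}{2}\big)^{-1}\preceq\tfrac12(M^{-1}+N^{-1})$ with the two matrices distinct, which follows from $\tfrac{d^2}{dt^2}A_t^{-1}=2A_t^{-1}BA_t^{-1}BA_t^{-1}$ (with $A_t=(1-t)M+tN$, $B=N-M$) and the observation that $v^\top(\tfrac{d^2}{dt^2}A_t^{-1})v$ vanishes for all $v$ only if $B=0$; combined with strict monotonicity of the Schatten-$p$ norm on psd matrices ($0\preceq A\lneq B$ implies $\norm{A}_{S^p}<\norm{B}_{S^p}$, via $\lambda_i(A)\le\lambda_i(B)$ and $\tr(B-A)>0$) this yields $\Psi_p(\tfrac{M+N}{2})<\tfrac12(\Psi_p(M)+\Psi_p(N))$, so $\Psi_p$ is strictly convex by Lemma~\ref{lm:strict-convexity}. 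For $p\in[0,1)$ I would instead write $\Psi_0(M)=\tr(-\log M)$ and, for $p\in(0,1)$, $\Psi_p=g\circ h$ with $h(M)=\tr(M^{-p})$ and $g(s)=s^{1/p}$; here $g$ is convex and strictly increasing on $(0,\infty)$ (since $1/p>1$), so it suffices to know that $M\mapsto\tr\varphi(M)$ is strictly convex on $\Rpd^{d\times d}$ whenever $\varphi$ is strictly convex, which I would obtain either from Klein's inequality (with its equality case) or by computing the Hessian $D^2(\tr\varphi(M))[E,E]=\sum_{i,j}(\varphi')^{[1]}(\lambda_i(M),\lambda_j(M))E_{ij}^2$ in an eigenbasis of $M$ and noting that the divided differences $(\varphi')^{[1]}$ are strictly positive because $\varphi'$ is strictly increasing.

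\emph{Main obstacle.} The structural parts and the reduction of differentiability to Lemma~\ref{lm:directional-derivatives} are routine; the real work is the convexity/strict-convexity statement, and within it the case $p\ge 1$, where the outer power $s\mapsto s^{1/p}$ is concave and destroys the simple ``increasing-convex-of-convex'' argument, forcing the Schatten-norm viewpoint together with the strict form of operator convexity of the matrix inverse. I expect pinning down the equality cases there --- that $\big(\tfrac{M+N}{2}\big)^{-1}\ne\tfrac12(M^{-1}+N^{-1})$ for $M\ne N$, and the strict Löwner-monotonicity of the Schatten-$p$ norm --- to be the most delicate bookkeeping.
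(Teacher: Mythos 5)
Your proposal is correct, and for the domain, antitonicity, lower semicontinuity, and differentiability parts it follows essentially the same route as the paper: $\dom\Psi_p$ from the definition, antitonicity from the eigenvalue representation and Weyl monotonicity, lower semicontinuity from continuity on $\Rpd^{d\times d}$ combined with Lemma~\ref{lm:upper-bound-on-Psi(M)-implies-lower-bound-on-M}, and continuous Fr\'{e}chet differentiability plus local Lipschitz continuity of the derivative from Lemma~\ref{lm:directional-derivatives} together with the standard ``continuous G\^{a}teaux derivative implies Fr\'{e}chet'' criterion. The one place where you genuinely diverge is the strict-convexity step. For $p\ge 1$ the underlying convexity mechanism is identical (operator convexity of $M\mapsto M^{-1}$, L\"owner-monotonicity of the Schatten-$p$ quantities, triangle inequality), but for strictness the paper outsources the key fact to an explicit eigenvalue formula (Exercise~V.1.15 of~\cite{Bhatia}) giving strict inequality in~\eqref{eq:standard-properties-for-Psi_p,1} for at least one index, whereas you derive strict midpoint operator convexity of the inverse from the second-derivative identity $\tfrac{d^2}{dt^2}A_t^{-1}=2A_t^{-1}BA_t^{-1}BA_t^{-1}$ and pair it with strict L\"owner-monotonicity of $\norm{\cdot}_{S^p}$ (via $\tr(B-A)>0$); both then invoke Lemma~\ref{lm:strict-convexity}. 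For $p\in(0,1)$ and $p=0$ the paper simply cites~\cite{Ki74} and~\cite{MagnusNeudecker}, while you give an actual argument through strict convexity of $M\mapsto\tr\varphi(M)$ for strictly convex $\varphi$ (Klein's inequality or the divided-difference Hessian) composed, in the case $p\in(0,1)$, with the strictly increasing convex map $s\mapsto s^{1/p}$. Your version is more self-contained and uniform across the parameter range; the paper's is shorter at the cost of leaning on three external references. Both are sound.
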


\begin{proof}
It is clear by the definition~\eqref{eq:Psi_p-def}-\eqref{eq:Psi_p-def-2} that $\dom \Psi_p = \R^{d\times d}_{\mathrm{pd}}$ and that $\Psi_p|_{\dom \Psi_p}$ is continuous (Lemma \ref{lm:directional-derivatives}) for $p \in [0,\infty)$. With this continuity property and Lemma~\ref{lm:upper-bound-on-Psi(M)-implies-lower-bound-on-M}, in turn, the lower semicontinuity of $\Psi_p$ for $p \in [0,\infty)$ easily follows. 
Also, the antitonicity of $\Psi_p$ for $p \in [0,\infty)$ immediately follows by~\eqref{eq:Psi_p-eigenvalue-representation} and the well-known isotonicity of the eigenvalue maps $M \mapsto \lambda_i(M)$ for $i \in \{1,\dots,d\}$ (Theorem~11.9 of~\cite{MagnusNeudecker}). 
In order to see the directional differentiability of $\Psi_p|_{\dom\Psi_p}$ and the formulas~\eqref{eq:directional-derivative-p=0} and~\eqref{eq:directional-derivative-p>0}, we have only to apply Lemma~\ref{lm:directional-derivatives}. It is clear from \eqref{eq:directional-derivative-p=0} and~\eqref{eq:directional-derivative-p>0} that $$\R^{d\times d} \ni E \mapsto \partial_E \Psi_p(M) \in \R$$ is a bounded linear map $D\Psi_p(M)$ for every $M \in  \R^{d\times d}_{\mathrm{pd}}$ and that $\R^{d\times d}_{\mathrm{pd}} \ni M \mapsto D\Psi_p(M)$, in turn, is continuous (Lemma~\ref{lm:directional-derivatives}). Consequently, $\Psi_p|_{\dom\Psi_p}$ is continuously (Fr\'{e}chet) differentiable 
by a standard differentiability criterion (Proposition~4.8 of~\cite{Zeidler}, for instance). Additionally, $\Rpd^{d \times d} \ni M \mapsto D\Psi_p(M)$ is locally Lipschitz continuous by the formulas~\eqref{eq:directional-derivative-p=0} and~\eqref{eq:directional-derivative-p>0} of Lemma~\ref{lm:directional-derivatives} and the last part of Lemma~\ref{lm:directional-derivatives}. 
%
%
It remains to establish the strict convexity of $\Psi_p$ for all $p \in [0,\infty)$. Strict convexity of $\Psi_0$ follows by Theorem~11.25 in~\cite{MagnusNeudecker}. Convexity of $\Psi_p$ in the case $p \in [1,\infty)$ follows from~\eqref{eq:Psi_p-eigenvalue-representation} by noting first that
\begin{align} \label{eq:standard-properties-for-Psi_p,1}
\lambda_i\big( (\alpha M + (1-\alpha) N)^{-1} \big) 
\le 
\lambda_i\big( \alpha M^{-1} + (1-\alpha) N^{-1} \big) 
\qquad (i \in \{1,\dots,d\})
\end{align}
for arbitrary $M, N \in \R^{d\times d}_{\mathrm{pd}}$ and all $\alpha \in [0,1]$ by virtue of the convexity of matrix inversion on $\R^{d\times d}_{\mathrm{pd}}$ (Corollary~V.2.6 of~\cite{Bhatia}) and by applying then the triangle inequality for the $p$-Schatten norms (Theorem~11.26 of~\cite{MagnusNeudecker} which is valid for $p \in [1,\infty)$) to the right-hand side of~\eqref{eq:standard-properties-for-Psi_p,1}. Strict convexity of $\Psi_p$ for $p \in [1,\infty)$ can be seen as follows: for $M \ne N$ and $\alpha = 1/2$ one has strict inequality in~\eqref{eq:standard-properties-for-Psi_p,1} for at least one $i \in  \{1,\dots,d\}$ (this follows by the explicit formula from Exercise~V.1.15 of~\cite{Bhatia}) and thus the same arguments as for mere convexity above yield strict mid-point convexity which, in turn, proves strict convexity (Lemma~\ref{lm:strict-convexity}). Strict convexity of $\Psi_p$ in the case $p \in (0,1)$ is stated in the first and second paragraph on page 864 of~\cite{Ki74} -- a proof of this is indicated on page~863 (Case 2) of~\cite{Ki74}. 
%
%
\end{proof}

In some cases, one is interested in further generalizations of the design criteria $\Psi_p$, namely in the design criteria $\Psi_{p,Q}: \Rpsd^{d \times d} \to \R \cup \{\infty\}$ defined by
\begin{align} \label{eq:Psi_p,Q-def}
\Psi_{0,Q}(M) := \log \det(Q^\top M^- Q) 
\qquad \text{and} \qquad
\Psi_{p,Q}(M) := (\tr(Q^\top M^- Q)^p)^{1/p}
\end{align}
for $p \in (0,\infty)$ and for all $M \in \Rpsd^{d \times d}$ with $\ran(Q) \subset \ran(M)$ and defined by 
\begin{align}
\Psi_{p,Q}(M) := \infty \qquad (p \in [0,\infty))
\end{align}
else. See~\cite{Ki74} or~\cite{PrPa} (Section~5.1.2). In the above definitions, $Q \in \R^{d \times s}$ with some $s \le d$ and $M^-$ denotes any generalized inverse of $M$ (Section 9.1 of~\cite{Harville}), while $\ran(A)$ denotes the range of a matrix $A$. 
Such design criteria arise, for instance, if one only wants to estimate a linear function $Q^\top \theta$ of the model parameter (as opposed to the whole model parameter vector $\theta$).

\begin{cor} \label{cor:standard-properties-for-Psi_p,Q}
Suppose that $p \in [0,\infty)$ and $Q \in \R^{d \times s}$ with full column rank $\rk Q = s \le d$. Then $\Psi_{p,Q}$ is antitonic, convex, and lower semicontinuous. Additionally, the restriction $\Psi_{p,Q}|_{\Rpd^{d \times d}}$ is continuously differentiable and the derivative map $$\Rpd^{d \times d} \ni M \mapsto D\Psi_{p,Q}(M)$$ is locally Lipschitz continuous.
\end{cor}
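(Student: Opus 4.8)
The plan is to reduce everything to Proposition~\ref{prop:standard-properties-for-Psi_p}. Denote by $\Psi_p^{(s)}$ the criterion defined exactly as in~\eqref{eq:Psi_p-def}--\eqref{eq:Psi_p-def-2}, but with $d$ replaced by $s$, and let $C_Q$ be the information matrix mapping of the subsystem $Q^\top\theta$, that is, $C_Q(M) := (Q^\top M^- Q)^{-1}$ for $M$ in the feasibility cone $\mathcal{A}_Q := \{M \in \Rpsd^{d\times d}: \ran Q \subseteq \ran M\}$ and $C_Q(M)$ undefined otherwise. Since $\rk Q = s$, a standard argument (Section~9.1 of~\cite{Harville}) shows that for $M \in \mathcal{A}_Q$ the matrix $Q^\top M^- Q$ does not depend on the choice of generalized inverse $M^-$ and lies in $\Rpd^{s\times s}$, so $C_Q$ is well-defined on $\mathcal{A}_Q$ with values in $\Rpd^{s\times s}$. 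Comparing~\eqref{eq:Psi_p-def}--\eqref{eq:Psi_p-def-2} with~\eqref{eq:Psi_p,Q-def} and using $C_Q(M)^{-1} = Q^\top M^- Q$, one gets $\Psi_{p,Q} = \Psi_p^{(s)} \circ C_Q$ on $\mathcal{A}_Q$, while $\Psi_{p,Q} \equiv \infty$ on $\Rpsd^{d\times d} \setminus \mathcal{A}_Q$. I would then collect from the literature on information matrices (\cite{Pu}, or~\cite{PrPa} Section~5.1.2, or~\cite{Ki74}) the following properties of $C_Q$: it is isotonic and concave (w.r.t.\ the L\"owner order) on $\mathcal{A}_Q$; it satisfies the Gauss--Markov characterization $C_Q(M) = \min\{L M L^\top: L \in \R^{s\times d},\ LQ = I_s\}$, the minimum being taken in the L\"owner order and being attained; and its restriction to $\Rpd^{d\times d}$, namely $M \mapsto (Q^\top M^{-1} Q)^{-1}$, is a composition of matrix inversions and linear maps, hence smooth on $\Rpd^{d\times d}$ with locally Lipschitz derivative (Lemma~\ref{lm:directional-derivatives}). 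Note that isotonicity and concavity can also be read off directly from the Gauss--Markov formula, a pointwise minimum of affine and monotone maps in $M$ being concave and monotone.

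Granting this, antitonicity, convexity, and differentiability follow quickly. For antitonicity, if $M \le M'$ in $\Rpsd^{d\times d}$ then $\ker M' \subseteq \ker M$, hence $\ran M \subseteq \ran M'$, so $M \in \mathcal{A}_Q$ forces $M' \in \mathcal{A}_Q$; then $C_Q(M) \le C_Q(M')$ by isotonicity of $C_Q$ and therefore $\Psi_{p,Q}(M) = \Psi_p^{(s)}(C_Q(M)) \ge \Psi_p^{(s)}(C_Q(M')) = \Psi_{p,Q}(M')$ by the antitonicity of $\Psi_p^{(s)}$ from Proposition~\ref{prop:standard-properties-for-Psi_p}, while the case $M \notin \mathcal{A}_Q$ is trivial. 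For convexity, $\mathcal{A}_Q$ is convex and even $\ran(\alpha M + (1-\alpha) M') = \ran M + \ran M' \supseteq \ran Q$ for $\alpha \in (0,1)$ and $M, M' \in \mathcal{A}_Q$; combining the concavity of $C_Q$ with the antitonicity and convexity of $\Psi_p^{(s)}$ yields the convexity inequality on $\mathcal{A}_Q$, and it is trivial when one endpoint lies outside $\mathcal{A}_Q$. For $\Psi_{p,Q}|_{\Rpd^{d\times d}}$, note $C_Q$ maps $\Rpd^{d\times d}$ into $\Rpd^{s\times s} = \dom \Psi_p^{(s)}$, so the chain rule and Proposition~\ref{prop:standard-properties-for-Psi_p} give continuous differentiability with $D\Psi_{p,Q}(M) = D\Psi_p^{(s)}(C_Q(M)) \circ DC_Q(M)$; since $M \mapsto D\Psi_p^{(s)}(M)$ is locally Lipschitz (Proposition~\ref{prop:standard-properties-for-Psi_p}) and $C_Q$, $DC_Q$ are locally Lipschitz on $\Rpd^{d\times d}$, this composition of locally Lipschitz and locally bounded maps is again locally Lipschitz.

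The delicate point is lower semicontinuity, since $C_Q$ is genuinely discontinuous on the boundary of $\mathcal{A}_Q$, so one cannot simply compose ``lower semicontinuous with continuous''. My plan is to show that every sublevel set $\{M \in \Rpsd^{d\times d}: \Psi_{p,Q}(M) \le c\}$ is closed. Let $M_n \to M$ with $\Psi_{p,Q}(M_n) \le c$; in particular $M_n \in \mathcal{A}_Q$. Fixing any $L_0$ with $L_0 Q = I_s$, the Gauss--Markov bound gives $C_Q(M_n) \le L_0 M_n L_0^\top$, so $(C_Q(M_n))_n$ is bounded; together with the dimension-$s$ analogue of Lemma~\ref{lm:upper-bound-on-Psi(M)-implies-lower-bound-on-M} (an upper bound on $\Psi_p^{(s)}$ forces a positive lower L\"owner bound) this gives $C_Q(M_n) \ge c' I_s$ for a uniform $c' > 0$, i.e.\ $Q^\top M_n^+ Q \le (1/c') I_s$. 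If $\ran Q \not\subseteq \ran M$, choosing $v$ with $Qv \notin \ran M$ would force $v^\top Q^\top M_n^+ Q v = |(M_n^+)^{1/2} Q v|^2 \to \infty$ (the eigenvalues of $M_n^+$ along the spectral subspace of $M_n$ approximating $\ker M$ blow up, while the corresponding component of $Qv$ stays bounded away from $0$), a contradiction; hence $M \in \mathcal{A}_Q$. Passing to a subsequence along which $C_Q(M_{n_k}) \to \bar C$ and letting $k \to \infty$ in $C_Q(M_{n_k}) \le L M_{n_k} L^\top$ gives $\bar C \le L M L^\top$ for every admissible $L$, so $\bar C \le C_Q(M)$ because $C_Q(M)$ is the L\"owner-minimum of $\{L M L^\top : LQ = I_s\}$. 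Finally, by the antitonicity and lower semicontinuity of $\Psi_p^{(s)}$ (Proposition~\ref{prop:standard-properties-for-Psi_p}), $c \ge \liminf_k \Psi_p^{(s)}(C_Q(M_{n_k})) \ge \Psi_p^{(s)}(\bar C) \ge \Psi_p^{(s)}(C_Q(M)) = \Psi_{p,Q}(M)$, so $M$ lies in the sublevel set. I expect the blow-up claim to be the main technical hurdle; it can alternatively be bypassed by invoking the upper semicontinuity of $C_Q$ on $\Rpsd^{d\times d}$ established in~\cite{Pu}, from which closedness of the sublevel sets follows directly via the lower semicontinuity of $\Psi_p^{(s)}$.
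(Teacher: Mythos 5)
Your proposal is correct and follows essentially the same route as the paper: both factor the criterion as $\Psi_{p,Q}=\Psi_p\circ C_Q$ via Pukelsheim's Theorem~3.15, derive antitonicity and convexity from the isotonicity and concavity of $C_Q$ together with Proposition~\ref{prop:standard-properties-for-Psi_p}, and obtain the differentiability statements from the explicit formula $C_Q(M)=(Q^\top M^{-1}Q)^{-1}$ on $\Rpd^{d\times d}$ by the chain rule. The one place where you genuinely diverge is the lower semicontinuity: the paper uses the perturbation $M_n':=M_n+\norm{M-M_n}I$, which satisfies $M_n'\ge M$ and $M_n'\ge M_n$ simultaneously, and then invokes the upper semicontinuity of $C_Q$ (Theorem~3.13.b of~\cite{Pu}) along $(M_n')$ together with the antitonicity and lower semicontinuity of $\Psi_p$ -- a shorter argument that avoids sublevel sets entirely. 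Your self-contained sublevel-set argument also works, but the blow-up claim as you justify it (``the corresponding component of $Qv$ stays bounded away from $0$'' in the near-kernel eigenspace of $M_n$) is shakier than it needs to be; it becomes rigorous via the generalized Cauchy--Schwarz inequality $(w^\top Qv)^2\le (w^\top M_n w)\,(v^\top Q^\top M_n^+Qv)$ applied with $w\in\ker M$ the component of $Qv$ orthogonal to $\ran M$, which gives $v^\top Q^\top M_n^+Qv\ge |w|^4/(w^\top M_n w)\to\infty$. Your fallback of simply citing the upper semicontinuity of $C_Q$ from~\cite{Pu} is exactly what the paper does.
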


\begin{proof}
Since $Q$ has full column rank $s$ by assumption, it follows by Theorem~3.15 of~\cite{Pu} that $\Psi_{p,Q}$ can be expressed in terms of information matrices $C_Q(M)$ (Section~3.2 of~\cite{Pu}). Specifically,
\begin{align} \label{eq:Psi-p,Q-alternative-representation}
\Psi_{p,Q}(M) = \Psi_p(C_Q(M)) \qquad (M \in \Rpsd^{d \times d}).
\end{align}
And from this representation, in turn, the antitonicity, convexity and lower semicontinuity of $\Psi_{p,Q}$ can be 
concluded using the corresponding properties of $\Psi_p$ (Proposition~\ref{prop:standard-properties-for-Psi_p}) and of the information matrix mapping $C_Q$ (Theorem~3.13 of~\cite{Pu}). (In order to obtain the lower semicontinuity of $\Psi_{p,Q}$, assume $M_n, M \in \Rpsd^{d \times d}$ with $M_n \longrightarrow M$. Setting $M_n' := M_n + \norm{M-M_n}$, we observe that $M_n' \longrightarrow M$ as $n \to \infty$ and that $M_n' \ge M$ 
and hence $C_Q(M_n') \ge C_Q(M)$ for all $n \in \N$ by Theorem~3.13 of~\cite{Pu}. Applying Theorem~3.13.b of~\cite{Pu} to $(M_n')$, we then arrive at the desired lower semicontinuity estimate, using the lower semicontinuity of $\Psi_p$ in conjunction with the antitonicity of $\Psi_p$ and the estimate $C_Q(M_n') \ge C_Q(M_n)$.) 
Additionally, from~\eqref{eq:Psi-p,Q-alternative-representation}, we easily obtain the continuous differentiability of $\Psi_{p,Q}|_{\Rpd^{d \times d}}$ and the local Lipschitz continuity of the derivative map. We have only to use that
\begin{align} \label{eq:C_Q-alternative-representation}
C_Q(M) = \big( Q^\top M^{-1} Q \big)^{-1} \in \Rpd^{d \times d} \qquad (M \in \Rpd^{d \times d})
\end{align}  
(Section~3.3 and Theorem~3.15 of~\cite{Pu}) and combine this with the continuous differentiability of $\Psi_p|_{\Rpd^{d \times d}}$ and the local Lipschitz continuity of the derivative map (Proposition~\ref{prop:standard-properties-for-Psi_p}). 
\end{proof}

Sometimes it is more convenient to work with the modified design criteria $\tilde{\Psi}_p$ and $\tilde{\Psi}_{0,Q}$ defined by
\begin{align} \label{eq:tilde-Psi_p-def}
\tilde{\Psi}_0(M) := \Psi_0(M)
\qquad \text{and} \qquad
\tilde{\Psi}_p(M) := (\Psi_p(M))^p
\qquad (p \in (0,\infty))
\end{align}
and, respectively, by $\tilde{\Psi}_{0,Q}(M) := \Psi_{0,Q}(M)$ and $\tilde{\Psi}_{p,Q}(M) := (\Psi_{p,Q}(M))^p$. See~\cite{YaBiTa13}, for instance. Since $[0,\infty) \ni t \mapsto t^p$ is convex and monotonically increasing for $p \in [1,\infty)$, it follows by Proposition~\ref{prop:standard-properties-for-Psi_p} and Corollary~\ref{cor:standard-properties-for-Psi_p,Q} that for $p \in [1,\infty)$ the modified criteria $\tilde{\Psi}_p$ and $\tilde{\Psi}_{p,Q}$ are antitonic, lower semicontinuous and convex as well. As we will show now, for $p \in \N_0$ the modified criteria $\tilde{\Psi}_p$ are even strongly convex.

\begin{cor} \label{cor:strongly-convex-design-criteria}
Suppose $C \in (0,\infty)$ and let $\mathcal{M} := \{A \in \Rpd^{d \times d}: A \le C\}$. Then $\tilde{\Psi}_p|_{\mathcal{M}}$ for every $p \in \N_0$ is $\mu$-strongly convex w.r.t.~$|\cdot|$, where $\mu := \max\{1,p\}(p+1)/C^{p+2}$. In particular, $\Psi_0|_{\mathcal{M}}$ and $\Psi_1|_{\mathcal{M}}$ are strongly convex.
\end{cor}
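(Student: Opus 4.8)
The plan is to deduce the strong-convexity estimate \eqref{eq:mu-strong-convexity-def} for $\tilde{\Psi}_p|_{\mathcal{M}}$ from a uniform lower bound on the second directional derivatives of $\tilde{\Psi}_p$ along line segments in $\mathcal{M}$. First I would note that $\mathcal{M}$ is convex, being the intersection of $\Rpd^{d \times d}$ (convex by Lemma~\ref{lm:rint(pos-def-matrices)}) with the convex set $\{A \in \R^{d \times d}_{\mathrm{sym}}: A \le C\}$, and that $\tilde{\Psi}_p$ is a composition of smooth maps (matrix inversion, matrix powers, trace and $\log\det$) on the open set $\Rpd^{d \times d} \supset \mathcal{M}$, hence $C^\infty$ there; in particular $\tilde{\Psi}_p|_{\mathcal{M}}$ is differentiable in the sense required by \eqref{eq:mu-strong-convexity-def}. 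Then, for $M, N \in \mathcal{M}$ with $E := N - M$, the segment stays in $\mathcal{M} \subset \Rpd^{d \times d}$, the scalar map $h(t) := \tilde{\Psi}_p(M+tE)$ is $C^2$ on an open interval containing $[0,1]$, and Taylor's formula with integral remainder gives
\[
\tilde{\Psi}_p(N) - \tilde{\Psi}_p(M) - D\tilde{\Psi}_p(M)E = \int_0^1 (1-t)\, h''(t)\, \d t, \qquad h''(t) = D^2\tilde{\Psi}_p(M+tE)[E,E].
\]
Since $\int_0^1 (1-t)\, \d t = 1/2$, it thus suffices to show $D^2\tilde{\Psi}_p(M)[E,E] \ge \mu\, |E|^2$ for every $M \in \mathcal{M}$ and every symmetric $E \in \R^{d \times d}$, with $\mu = \max\{1,p\}(p+1)/C^{p+2}$.

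To prove this Hessian-type bound I would differentiate the first-order formulas of Lemma~\ref{lm:directional-derivatives} once more in the direction $E$. For $p = 0$ one has $\tilde{\Psi}_0 = \Psi_0$, $D\Psi_0(M)E = -\tr(M^{-1}E)$, and with $\ddt{(M+tE)^{-1}}\big|_{t=0+} = -M^{-1}EM^{-1}$ this gives $D^2\tilde{\Psi}_0(M)[E,E] = \tr(M^{-1}EM^{-1}E)$. For $p \ge 1$ one has $\tilde{\Psi}_p(M) = \tr(M^{-p})$, $D\tilde{\Psi}_p(M)E = -p\,\tr(M^{-p-1}E)$, and the product rule applied to $(M+tE)^{-(p+1)} = \big((M+tE)^{-1}\big)^{p+1}$ (using $\ddt{(M+tE)^{-1}}\big|_{t=0+} = -M^{-1}EM^{-1}$) yields $\ddt{(M+tE)^{-(p+1)}}\big|_{t=0+} = -\sum_{j=0}^{p} M^{-(j+1)} E\, M^{-(p+1-j)}$, hence
\[
D^2\tilde{\Psi}_p(M)[E,E] = p \sum_{j=0}^{p} \tr\!\big( M^{-(j+1)} E\, M^{-(p+1-j)} E \big).
\]
Both right-hand sides are then bounded below by diagonalizing $M = U \diag(\lambda_1, \dots, \lambda_d) U^\top$ with eigenvalues $0 < \lambda_i \le C$ (the bound $\lambda_i \le C$ being exactly the defining condition $M \le C$ of $\mathcal{M}$): writing $\hat{E} := U^\top E U$, which is symmetric, one has for any $a,b \ge 1$
\[
\tr\!\big( M^{-a} E\, M^{-b} E \big) = \sum_{i,k=1}^{d} \lambda_i^{-a} \lambda_k^{-b}\, \hat{E}_{ik}^2 \ge C^{-(a+b)} \sum_{i,k=1}^{d} \hat{E}_{ik}^2 = C^{-(a+b)}\, |E|^2 .
\]
Each summand of the two Hessian formulas has $a + b = p + 2$, and there are $p+1$ of them for $p \ge 1$ (resp. one for $p = 0$), so together with the prefactor $\max\{1,p\}$ this yields precisely $D^2\tilde{\Psi}_p(M)[E,E] \ge \max\{1,p\}(p+1)\, C^{-(p+2)}\, |E|^2$. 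Finally, since $\tilde{\Psi}_0 = \Psi_0$ and $\tilde{\Psi}_1 = \Psi_1$, the "in particular" statement follows.

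The main obstacle I anticipate is the bookkeeping for the second derivative of $\tr(M^{-p})$ at general integer $p$ — verifying the product-rule identity for $\ddt{(M+tE)^{-(p+1)}}$ and the ensuing rewriting of the traces in the eigenbasis of $M$ — together with making precise the $C^2$ regularity underlying the integral Taylor remainder (which, however, is immediate once one records that $\tilde{\Psi}_p$ is smooth on the open set $\Rpd^{d \times d}$). A small but genuinely essential point is that the direction $E = N - M$ is symmetric, so that $\hat{E}$ is symmetric and $\tr(M^{-a}EM^{-b}E) = \sum_{i,k} \lambda_i^{-a}\lambda_k^{-b}\hat{E}_{ik}^2$ is a sum of nonnegative terms; for general non-symmetric $E$ this trace need not even be nonnegative, and the estimate would break down.
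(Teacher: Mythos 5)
Your proof is correct and yields exactly the stated constant, but it runs along a slightly different track than the paper's. You expand to second order: Taylor's formula with integral remainder reduces everything to the pointwise Hessian bound $D^2\tilde{\Psi}_p(M)[E,E] \ge \mu\,|E|^2$ on $\mathcal{M}$, which you verify by computing the second derivative explicitly ($\tr(M^{-1}EM^{-1}E)$ for $p=0$, resp.\ $p\sum_{j=0}^{p}\tr(M^{-(j+1)}EM^{-(p+1-j)}E)$ for $p\ge 1$) and then diagonalizing $M$, using that its eigenvalues lie in $(0,C]$ and that $E$ is symmetric, so each trace becomes the nonnegative sum $\sum_{i,k}\lambda_i^{-a}\lambda_k^{-b}\hat{E}_{ik}^2 \ge C^{-(p+2)}|E|^2$. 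The paper stops at first order: it writes the left-hand side as $\int_0^1 \big(D\Psi(M_t)-D\Psi(M)\big)E\,\mathrm{d}t$ and substitutes the exact telescoping identity $M^{-p-1}-M_t^{-p-1}=t\sum_{q=1}^{p+1}M^{-q}EM_t^{-(p+2-q)}$, so the quantities to be bounded are the mixed traces $\tr\big(M^{-q}EM_t^{-(p+2-q)}E\big)$ involving both endpoints of the segment; these are estimated via the operator inequalities $M^{-k}, M_t^{-k}\ge C^{-k}$ and cyclic permutations under the trace rather than by an eigenbasis computation. The two routes are equivalent in substance --- your Hessian formula is precisely the paper's identity divided by $t$ in the limit $t\searrow 0$ --- and the factor $1/2$ arises from $\int_0^1(1-t)\,\mathrm{d}t$ in your version versus $\int_0^1 t\,\mathrm{d}t$ in the paper's. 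What your version buys is a clean single-point estimate and an explicit identification of where the symmetry of $E$ is indispensable; what the paper's version buys is that it never needs to justify second-order differentiability or the product-rule computation of $D^2\tilde{\Psi}_p$, relying only on the first-derivative formula already recorded in Lemma~\ref{lm:directional-derivatives}.
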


\begin{proof}
Suppose $p \in \N_0$ and write $\Psi := \tilde{\Psi}_p$ and $\eps_p := \max\{1,p\}$ for brevity. Also, let $M, N \in \mathcal{M}$ and write $E := N-M$ and $M_t := M + tE$ for $t \in [0,1]$. Since $\Psi|_{\dom \Psi}$ is continuously differentiable (Proposition~\ref{prop:standard-properties-for-Psi_p}) and $\mathcal{M}$ is a convex subset of $\dom\Psi = \Rpd^{d \times d}$, it follows by the mean value theorem that
\begin{align} \label{eq:strongly-convex-design-criteria,1}
\Psi(N) &- \Psi(M) - D\Psi(M)(N-M)
= \int_0^1 \big( D\Psi(M_t)-D\Psi(M) \big) E \d t \notag\\
&= \eps_p \int_0^1 \tr\big( (M^{-p-1} - M_t^{-p-1})E \big) \d t
= \eps_p \sum_{q=1}^{p+1} \int_0^1 t \cdot \tr\big( M^{-q} E M_t^{-(p+2-q)} E \big) \d t,
\end{align}  
where 
we used that $D\Psi(A)E = -\eps_p \tr(A^{-p-1}E)$ for $A \in \Rpd^{d \times d}$  (Lemma~\ref{lm:directional-derivatives}) and 
that $$M^{-p-1} - M_t^{-p-1} = t \sum_{q=1}^{p+1} M^{-q} E M_t^{-(p+2-q)}$$ for all $t \in [0,1]$ (induction over $p \in \N_0$). Since $M, M_t \in \mathcal{M}$, it further follows that
\begin{align} \label{eq:strongly-convex-design-criteria,2}
M_t^{-k} \ge C^{-k} 
\qquad \text{and} \qquad
M^{-k} \ge C^{-k}
\qquad (k \in \N \text{ and } t \in [0,1]).
\end{align}
With the help of cyclic permutations under the trace and of~\eqref{eq:strongly-convex-design-criteria,2} we conclude that
\begin{align} \label{eq:strongly-convex-design-criteria,3}
\tr\big( M^{-q} E M_t^{-(p+2-q)} E \big) 
&= \tr\big( (E M^{-q/2})^\top M_t^{-(p+2-q)} E M^{-q/2} \big) \notag\\
&\ge C^{-(p+2-q)}  \cdot \tr\big( (E M^{-q/2})^\top E M^{-q/2} \big)
= C^{-(p+2-q)}  \cdot \tr\big( E M^{-q} E \big) \notag\\
&\ge C^{-(p+2-q)} C^{-q} \cdot \tr(E^2) = C^{-(p+2)} |E|^2 
\end{align}
for every $t \in [0,1]$ and $q \in \{1,\dots,p+1\}$. Inserting now~\eqref{eq:strongly-convex-design-criteria,3} into~\eqref{eq:strongly-convex-design-criteria,1}, we immediately obtain the claimed $\eps_p (p+1)/C^{p+2}$-strong convexity of $\tilde{\Psi}_p|_{\mathcal{M}}$ w.r.t.~$|\cdot|$.
\end{proof}

%
%
%

\subsubsection{Composite design criteria}


We now discuss three common ways of constructing new design criteria from more basic ones, namely (i) incorporating information from a previous stage, (ii) taking averages, and (iii) taking suprema. In the case of (i), one speaks of two-stage design criteria. Important special cases of (ii) and (iii) are the weighted A-criterion~\eqref{eq:def-weighted-A-criterion} and the E-criterion~\eqref{eq:def-E-criterion}. 


\begin{prop} \label{prop:two-stage-design-criteria}
Suppose that $\Psi: \Rpsd^{d\times d} \to \R \cup \{\infty\}$ is any mapping (design criterion) and let $M_0 \in \Rpsd^{d\times d}$ and $\alpha \in [0,1)$. Also, let $\Psi^{(\alpha)}: \Rpsd^{d\times d} \to \R \cup \{\infty\}$ be the corresponding two-stage design criterion, that is,
\begin{align} \label{eq:two-stage-design-criterion}
\Psi^{(\alpha)}(M) := \Psi\big(\alpha M^0 + (1-\alpha) M\big)
\qquad (M \in \Rpsd^{d \times d}).
\end{align}
Convexity, lower semicontinuity and antitonicity then carry over from $\Psi$ to $\Psi^{(\alpha)}$. Additionally, if $\Psi|_{\dom\Psi}$ is differentiable, then $\Psi^{(\alpha)}|_{\dom\Psi^{(\alpha)}}$ is differentiable as well with derivative given by
\begin{align} \label{eq:two-stage-design-criteria-derivative}
D\Psi^{(\alpha)}(M)E = (1-\alpha) D\Psi\big(\alpha M^0 + (1-\alpha) M\big) E
\qquad (M \in \dom\Psi^{(\alpha)} \text{ and } E \in \R^{d \times d}). 
\end{align}
\end{prop}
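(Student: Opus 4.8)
The whole proposition rests on the observation that $\Psi^{(\alpha)}$ is the composition $\Psi \circ \Lambda$ of $\Psi$ with the affine map $\Lambda\colon \R^{d\times d} \to \R^{d\times d}$, $\Lambda(M) := \alpha M^0 + (1-\alpha)M$, which restricts to a map $\Rpsd^{d\times d} \to \Rpsd^{d\times d}$ because $M^0 \in \Rpsd^{d\times d}$ and $\alpha, 1-\alpha \ge 0$. The plan is to record the three structural features of $\Lambda$ that matter here — it is affine, it is continuous, and it is isotone w.r.t.\ the L\"owner order (the last since $\Lambda(N) - \Lambda(M) = (1-\alpha)(N-M)$ and $1-\alpha \ge 0$) — and then transfer each of the four assertions from $\Psi$ to $\Psi \circ \Lambda$ using exactly one of these features together with one property of $\Psi$. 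Throughout, one uses that $\dom\Psi^{(\alpha)} = \Lambda^{-1}(\dom\Psi) \cap \Rpsd^{d\times d}$.

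For convexity one first notes $\Lambda(\beta M + (1-\beta)N) = \beta\Lambda(M) + (1-\beta)\Lambda(N)$ for $\beta \in [0,1]$ (affinity of $\Lambda$) and then applies convexity of $\Psi$ to the inner argument; this works verbatim for extended-real-valued $\Psi$ and in particular shows that $\dom\Psi^{(\alpha)}$ is convex. For lower semicontinuity, if $M_n \to M$ in $\Rpsd^{d\times d}$ then $\Lambda(M_n) \to \Lambda(M)$ by continuity of $\Lambda$, hence $\liminf_n \Psi^{(\alpha)}(M_n) = \liminf_n \Psi(\Lambda(M_n)) \ge \Psi(\Lambda(M)) = \Psi^{(\alpha)}(M)$, which is valid in $\R \cup \{\infty\}$. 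For antitonicity, $M \le N$ implies $\Lambda(M) \le \Lambda(N)$ (isotonicity of $\Lambda$), whence $\Psi^{(\alpha)}(M) = \Psi(\Lambda(M)) \ge \Psi(\Lambda(N)) = \Psi^{(\alpha)}(N)$ by antitonicity of $\Psi$. All of this is routine.

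The only point deserving a little care is the differentiability claim, because ``differentiable'' here means, as explained after \eqref{eq:derivative-def}, extendability to a differentiable function on an open subset of the ambient matrix space. So let $\tilde\Psi\colon \tilde S \to \R$ be a differentiable extension of $\Psi|_{\dom\Psi}$ to an open set $\tilde S \supset \dom\Psi$. Since $\Lambda$ is an affine self-map of $\R^{d\times d}$ and thus continuous, the set $\Lambda^{-1}(\tilde S)$ is open and contains $\dom\Psi^{(\alpha)}$; moreover $\tilde\Psi \circ \Lambda$ is differentiable on $\Lambda^{-1}(\tilde S)$ and agrees with $\Psi^{(\alpha)}$ on $\dom\Psi^{(\alpha)}$, so it is a differentiable extension of $\Psi^{(\alpha)}|_{\dom\Psi^{(\alpha)}}$. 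The chain rule together with $D\Lambda(M)E = (1-\alpha)E$ then gives $D(\tilde\Psi \circ \Lambda)(M)E = (1-\alpha)\,D\tilde\Psi(\Lambda(M))E$, which for $M \in \dom\Psi^{(\alpha)}$ is precisely the asserted formula \eqref{eq:two-stage-design-criteria-derivative}; and, as noted in the remarks after \eqref{eq:derivative-def}, the right-hand side is independent of the chosen extension whenever a segment $M + tE$ stays in $\dom\Psi^{(\alpha)}$, so the formula is unambiguous wherever it is used. No genuine obstacle arises here; the only thing to get right is the bookkeeping of the domains $\dom\Psi^{(\alpha)} = \Lambda^{-1}(\dom\Psi) \cap \Rpsd^{d\times d}$ and $\Lambda^{-1}(\tilde S)$ in the extension argument.
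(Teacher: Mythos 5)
Your proof is correct and follows exactly the route the paper intends: the paper's own proof is just the remark ``straightforward verifications using our definition of differentiability \dots{} see the remarks around~\eqref{eq:derivative-def}'', and your write-up via the affine map $\Lambda(M)=\alpha M^0+(1-\alpha)M$ (affine, continuous, isotone) together with the extension-plus-chain-rule argument for the differentiability claim is precisely that verification spelled out. Nothing to add.
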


\begin{proof}
Straightforward verifications using our definition of differentiability and of derivatives of functions on arbitrary (not necessarily open) subsets of the vector space $\R^{d \times d}$. See the remarks around~\eqref{eq:derivative-def}.
\end{proof}


\begin{prop} \label{prop:sum-of-design-criteria}
Suppose that $\Psi_u: \Rpsd^{d \times d} \to \R \cup \{\infty\}$ is convex with $\dom \Psi_u = \Rpd^{d \times d}$ for every $u \in U$, where $U$ is a compact metric space. Suppose further that $\Rpd^{d \times d} \times U \ni (M,u) \mapsto \Psi_u(M) \in \R$ is continuous and that $\ol{\Psi}: \Rpsd^{d \times d} \to \R \cup \{\infty\}$ is an average of the $\Psi_u$, that is, 
\begin{align}
\ol{\Psi}(M) := \int_U \Psi_u(M) \d \zeta(u) \qquad (M \in \Rpsd^{d \times d})
\end{align}
with some finite measure $\zeta$ on $\mathcal{B}_U$. Then $\ol{\Psi}|_{\Rpd^{d \times d}}$ is convex and continuous and $\dom \ol{\Psi} = \Rpd^{d \times d}$. Additionally, the following assertions hold true:
\begin{itemize}
\item[(i)] If $\Psi_u$ is antitonic for every $u \in U$, then $\ol{\Psi}$ is antitonic as well.
\item[(ii)] If $\Psi_u$ is lower semicontinuous and antitonic for every $u \in U$, then $\ol{\Psi}$ is lower semicontinuous as well.
\item[(iii)] If $\Psi_u|_{\Rpd^{d \times d}}$ is directionally differentiable uniformly w.r.t.~$u \in U$, then $\ol{\Psi}|_{\Rpd^{d \times d}}$ is directionally differentiable as well with
\begin{align}
\partial_E \ol{\Psi}(M) = \int_U \partial_E \Psi_u(M) \d \zeta(u) 
\qquad (M \in \Rpd^{d \times d} \text{ and } E \in \R^{d \times d}).
\end{align}
\end{itemize}
\end{prop}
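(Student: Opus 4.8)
The plan is to establish the assertions one at a time, in each case reducing the desired property of $\ol{\Psi}$ to the corresponding property of the individual $\Psi_u$ by an elementary integration argument; the only delicate point is, in each case, producing a $\zeta$-integrable majorant or minorant so that the relevant interchange of limit and integral is legitimate. Throughout I would use repeatedly that for fixed $M \in \Rpd^{d\times d}$ the map $u \mapsto \Psi_u(M)$ is continuous on the compact space $U$ (by the assumed joint continuity of $(M,u)\mapsto\Psi_u(M)$), hence bounded and Borel measurable, so that $\int_U \Psi_u(M)\d\zeta(u)$ is a well-defined real number. This already gives $\Rpd^{d\times d} \subset \dom\ol{\Psi}$, while for $M \in \Rpsd^{d\times d}\setminus\Rpd^{d\times d}$ one has $\Psi_u(M) = \infty$ for all $u$, so $\ol{\Psi}(M) = \infty$ (tacitly assuming $\zeta$ nontrivial, the case $\zeta \equiv 0$ being degenerate); hence $\dom\ol{\Psi} = \Rpd^{d\times d}$. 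Convexity of $\ol{\Psi}|_{\Rpd^{d\times d}}$ is then immediate from the convexity of each $\Psi_u$, the convexity of $\Rpd^{d\times d}$ (Lemma~\ref{lm:rint(pos-def-matrices)}), and the monotonicity and linearity of the integral; combined with the convexity of $\dom\ol{\Psi}$, this yields convexity of $\ol{\Psi}$ on all of $\Rpsd^{d\times d}$ via the characterization of convexity for extended-real-valued functions recalled just before Lemma~\ref{lm:strict-convexity}.

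For the continuity of $\ol{\Psi}|_{\Rpd^{d\times d}}$ I would argue sequentially (legitimate since $\Rpd^{d\times d}$ is metrizable): given $M_n \to M$ in $\Rpd^{d\times d}$, the set $K := \{M_n : n\in\N\} \cup \{M\}$ is a compact subset of $\Rpd^{d\times d}$, so $(M',u)\mapsto\Psi_u(M')$ is bounded on the compact set $K\times U$; the resulting constant is a $\zeta$-integrable majorant of $|\Psi_u(M_n)|$, and since $\Psi_u(M_n)\to\Psi_u(M)$ for each $u$ by continuity, dominated convergence gives $\ol{\Psi}(M_n)\to\ol{\Psi}(M)$. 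Assertion~(i) is trivial: if each $\Psi_u$ is antitonic and $M\le N$, then $\Psi_u(M)\ge\Psi_u(N)$ pointwise in $u$ in the extended reals, and integration preserves this inequality.

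The main obstacle is assertion~(ii). Suppose each $\Psi_u$ is lower semicontinuous and antitonic, and let $M_n\to M$ in $\Rpsd^{d\times d}$; we must show $\liminf_n\ol{\Psi}(M_n)\ge\ol{\Psi}(M)$. Lower semicontinuity of $\Psi_u$ gives $\liminf_n\Psi_u(M_n)\ge\Psi_u(M)$ for each $u$, so the point is to invoke Fatou's lemma — which requires a $\zeta$-integrable lower bound for the $\Psi_u(M_n)$. This is exactly where antitonicity enters together with the boundedness of the convergent sequence: there is a $C_0\in(0,\infty)$ with $M_n\le C_0 I$ for all $n$ ($I$ the $d\times d$ identity matrix), so antitonicity yields $\Psi_u(M_n)\ge\Psi_u(C_0 I)$ for all $n$ and $u$, and $u\mapsto\Psi_u(C_0 I)$ is continuous on $U$, hence $\zeta$-integrable. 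Fatou's lemma in its version with an integrable minorant then gives $\liminf_n\ol{\Psi}(M_n)\ge\int_U\liminf_n\Psi_u(M_n)\d\zeta(u)\ge\int_U\Psi_u(M)\d\zeta(u)=\ol{\Psi}(M)$. I expect the recognition that antitonicity plus boundedness of $(M_n)$ is precisely what furnishes the integrable minorant to be the one genuinely nonroutine step.

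Finally, for assertion~(iii) I would read "directionally differentiable uniformly w.r.t.\ $u\in U$" as: for every $M\in\Rpd^{d\times d}$ and $E\in\R^{d\times d}$, the difference quotients $q_t(u):=\bigl(\Psi_u(M+tE)-\Psi_u(M)\bigr)/t$ converge to $\partial_E\Psi_u(M)$ uniformly in $u\in U$ as $t\searrow 0$. Since $\Rpd^{d\times d}$ is open (Lemma~\ref{lm:rint(pos-def-matrices)}), $M+tE\in\Rpd^{d\times d}$ for all small $t\ge 0$, so each $q_t$ is continuous in $u$, hence $\zeta$-integrable, and the uniform limit $u\mapsto\partial_E\Psi_u(M)$ is likewise continuous and $\zeta$-integrable. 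Then $\bigl(\ol{\Psi}(M+tE)-\ol{\Psi}(M)\bigr)/t=\int_U q_t(u)\d\zeta(u)$ differs from $\int_U\partial_E\Psi_u(M)\d\zeta(u)$ by at most $\zeta(U)\sup_{u\in U}|q_t(u)-\partial_E\Psi_u(M)|\to 0$ as $t\searrow 0$, so $\partial_E\ol{\Psi}(M)$ exists and equals $\int_U\partial_E\Psi_u(M)\d\zeta(u)$, which completes the proof.
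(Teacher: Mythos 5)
Your proof is correct and follows essentially the same route as the paper's: the paper's own (very terse) proof consists precisely of the observation that assertion~(ii) is the only delicate point and that it follows from Fatou's lemma with an integrable minorant, the minorant $u \mapsto \Psi_u(C)$ being furnished by antitonicity together with a positive definite upper bound $C$ of the convergent sequence $(M_n)$ in the L\"owner order. You identified exactly this as the one nonroutine step, and your treatment of the remaining assertions (dominated convergence for continuity, linearity of the integral for convexity, uniform convergence of difference quotients for~(iii)) matches what the paper leaves as ``simple verification.''
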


\begin{proof}
Simple verification using 
Fatou's lemma in the version for a sequence of measurable, not necessarily non-negative, functions with an integrable minorant. (At first glance, the antitonicity assumption in~(ii) might seem superfluous. We impose it because then for every convergent sequence $(M_n)$ in $\Rpsd^{d \times d}$, we have 
\begin{align*}
\Psi_u(M_n) \ge \Psi_u(C) \qquad (n\in \N \text{ and } u \in U),
\end{align*}
where $C \in \Rpd^{d \times d}$ is an arbitrary positive definite upper bound of the $M_n$ w.r.t.~the standard partial order on $\Rpsd^{d \times d}$. And therefore $u \mapsto \Psi_u(C)$ is a continuous, hence $\zeta$-integrable, minorant of the functions $u \mapsto \Psi_u(M_n)$, whence Fatou's lemma in the mentioned version can be applied.) 
\end{proof}

As a simple application of the above result, one can derive standard properties of the weighted A-criterion defined by 
\begin{align} \label{eq:def-weighted-A-criterion}
\Psi_W(M) := 
\begin{cases}
\tr(WM^{-1}), \qquad M \in \Rpd^{d \times d} \\
\infty, \qquad M \in \Rpsd^{d \times d} \setminus \Rpd^{d \times d}
\end{cases},
\end{align} 
where $W \in \Rpsd^{d \times d}$ is some fixed positive semidefinite matrix.

\begin{prop} \label{prop:supremum-of-design-criteria}
Suppose that $\Psi_u: \Rpsd^{d \times d} \to \R \cup \{\infty\}$ is convex with $\dom \Psi_u = \Rpd^{d \times d}$ for every $u \in U$, where $U$ is a compact metric space. Suppose further that $\Rpd^{d \times d} \times U \ni (M,u) \mapsto \Psi_u(M) \in \R$ is continuous and that $\ol{\Psi}: \Rpsd^{d \times d} \to \R \cup \{\infty\}$ is the supremum of the $\Psi_u$, that is, 
\begin{align}
\ol{\Psi}(M) := \sup_{u\in U} \Psi_u(M) \qquad (M \in \Rpsd^{d \times d}).
\end{align}
Then $\ol{\Psi}|_{\Rpd^{d \times d}}$ is convex and continuous and $\dom \ol{\Psi} = \Rpd^{d \times d}$. Additionally, the following assertions hold true:
\begin{itemize}
\item[(i)] If $\Psi_u$ is antitonic for every $u \in U$, then $\ol{\Psi}$ is antitonic as well.
\item[(ii)] If $\Psi_u$ is lower semicontinuous for every $u \in U$, then $\ol{\Psi}$ is lower semicontinuous as well.
\item[(iii)] If $\Psi_u|_{\Rpd^{d \times d}}$ is directionally differentiable uniformly w.r.t.~$u \in U$, then $\ol{\Psi}|_{\Rpd^{d \times d}}$ is directionally differentiable as well with
\begin{align}
\partial_E \ol{\Psi}(M) = \sup_{u \in U(M)} \partial_E \Psi_u(M) 
\qquad (M \in \Rpd^{d \times d} \text{ and } E \in \R^{d \times d}),
\end{align}
where $U(M) := \argmax_{u\in U} \Psi_u(M) = \{u \in U: \Psi_u(M) = \ol{\Psi}(M)\}$.
\end{itemize}
\end{prop}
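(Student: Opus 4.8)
The plan is to treat the four parts of the proposition separately; everything except the differentiation formula in~(iii) is short, and~(iii) is a Danskin-type statement that will be the main obstacle.

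\emph{Convexity, domain, continuity.} Convexity of $\ol{\Psi}$ is immediate, since a pointwise supremum of convex functions is convex. For the domain, if $M \in \Rpsd^{d\times d}\setminus\Rpd^{d\times d}$ then $\Psi_u(M) = \infty$ for every $u \in U$ because $\dom\Psi_u = \Rpd^{d\times d}$, hence $\ol{\Psi}(M) = \infty$; and if $M \in \Rpd^{d\times d}$ then $u \mapsto \Psi_u(M)$ is continuous on the compact space $U$, hence bounded, so $\ol{\Psi}(M) < \infty$. Thus $\dom\ol{\Psi} = \Rpd^{d\times d}$. For continuity of $\ol{\Psi}|_{\Rpd^{d\times d}}$, lower semicontinuity again comes for free (a supremum of continuous functions is lower semicontinuous); upper semicontinuity I would get from a compactness argument: given $M_n \to M$ in $\Rpd^{d\times d}$, I pass to a subsequence realizing $\limsup_n \ol{\Psi}(M_n)$, choose $u_n \in \argmax_{u\in U}\Psi_u(M_n)$ (nonempty by compactness of $U$ and continuity of $u\mapsto\Psi_u(M_n)$), extract a further subsequence with $u_n \to u^*$, and deduce $\ol{\Psi}(M_n) = \Psi_{u_n}(M_n) \to \Psi_{u^*}(M) \le \ol{\Psi}(M)$ from the joint continuity of $(M,u)\mapsto\Psi_u(M)$ on $\Rpd^{d\times d}\times U$.

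\emph{Parts (i) and (ii).} If every $\Psi_u$ is antitonic, then $M \le M'$ gives $\Psi_u(M) \ge \Psi_u(M')$ for all $u$ and hence $\ol{\Psi}(M) \ge \ol{\Psi}(M')$, which is~(i). For~(ii), $\ol{\Psi}$ is already continuous, hence lower semicontinuous, on the open set $\Rpd^{d\times d}$, so only boundary points $M \in \Rpsd^{d\times d}\setminus\Rpd^{d\times d}$ remain; there $\ol{\Psi}(M) = \infty$, and fixing any $u\in U$ and using $\ol{\Psi} \ge \Psi_u$ together with the lower semicontinuity of $\Psi_u$ gives $\liminf_n \ol{\Psi}(M_n) \ge \liminf_n \Psi_u(M_n) \ge \Psi_u(M) = \infty$ for every sequence $M_n \to M$. (In contrast to the averaging result, no antitonicity is needed here, since each $\Psi_u$ is a pointwise minorant of $\ol{\Psi}$ automatically, so there is no need to produce an integrable minorant.)

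\emph{Part (iii): the differentiation formula.} Fix $M \in \Rpd^{d\times d}$, $E \in \R^{d\times d}$, and a small $t^* > 0$ with $M + tE \in \Rpd^{d\times d}$ for $t\in[0,t^*]$, and set $g(t) := \ol{\Psi}(M+tE)$ and $D^* := \sup_{u\in U(M)}\partial_E\Psi_u(M)$. Since $g$ is finite and convex on $[0,t^*]$, the difference quotient $t\mapsto (g(t)-g(0))/t$ is nondecreasing, so $g'(0+)$ exists in $[-\infty,\infty)$. For the lower bound $g'(0+)\ge D^*$ I pick $u_0 \in U(M)$, use $g(t)\ge\Psi_{u_0}(M+tE)$ and $g(0)=\Psi_{u_0}(M)$ to get $(g(t)-g(0))/t \ge (\Psi_{u_0}(M+tE)-\Psi_{u_0}(M))/t \to \partial_E\Psi_{u_0}(M)$, and take the supremum over $u_0 \in U(M)$; this also shows $g'(0+)\in\R$, so $\ol{\Psi}$ is directionally differentiable at $M$ in direction $E$. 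For the upper bound $g'(0+)\le D^*$ I take an arbitrary sequence $t_n\searrow 0$ (WLOG $t_n\le t^*$), maximizers $u_n \in \argmax_u\Psi_u(M+t_nE)$, and a subsequence with $u_{n_k}\to u^*$; passing to the limit in $\Psi_{u_{n_k}}(M+t_{n_k}E)\ge\Psi_v(M+t_{n_k}E)$ via joint continuity shows $u^*\in U(M)$, and then, using $g(0)=\ol{\Psi}(M)\ge\Psi_{u_{n_k}}(M)$ together with the monotonicity of the difference quotients of the convex function $\Psi_{u_{n_k}}$ along the line, for every fixed auxiliary $s\in(0,t^*]$ and all large $k$ one has $(g(t_{n_k})-g(0))/t_{n_k} \le (\Psi_{u_{n_k}}(M+sE)-\Psi_{u_{n_k}}(M))/s$; letting $k\to\infty$ (joint continuity at the fixed point $s$) and then $s\searrow 0$ yields $g'(0+) \le \partial_E\Psi_{u^*}(M) \le D^*$. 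Combining the two bounds gives $\partial_E\ol{\Psi}(M) = g'(0+) = \sup_{u\in U(M)}\partial_E\Psi_u(M)$, as claimed. The delicate point, and where I expect the real work to lie, is controlling the difference quotient $(\Psi_{u_{n_k}}(M+t_{n_k}E)-\Psi_{u_{n_k}}(M))/t_{n_k}$ along the \emph{moving} maximizers $u_{n_k}$; the monotonicity-in-$t$ trick above handles this using only convexity, joint continuity, and compactness of $U$, while the uniform directional differentiability hypothesis of~(iii) offers an alternative, more hands-on route to the same estimate.
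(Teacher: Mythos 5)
Your proof is correct, but it takes a genuinely different route from the paper's. The paper disposes of the whole proposition in one line by invoking Theorem~3.2 of Pshenichnyi's book (a Danskin-type max-function theorem), together with the remark that the proofs there carry over when the ambient normed space is replaced by an open subset of one; the routine parts (convexity, domain, antitonicity, lower semicontinuity) are left as ``simple verification.'' You instead prove everything from scratch, and in particular give a self-contained derivation of the directional-derivative formula in~(iii): the lower bound $\partial_E\ol{\Psi}(M)\ge\sup_{u\in U(M)}\partial_E\Psi_u(M)$ by minorization, and the upper bound by extracting convergent maximizers $u_{n_k}\to u^*\in U(M)$ and controlling the difference quotient along the moving maximizers via the monotonicity of difference quotients of the convex sections $t\mapsto\Psi_{u_{n_k}}(M+tE)$. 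What your approach buys is independence from the external reference and from its unproved generalization, plus the observation -- which you make explicitly -- that the ``uniform directional differentiability'' hypothesis in~(iii) is never actually used: convexity, joint continuity, and compactness of $U$ already suffice (indeed, each $\Psi_u$ is automatically directionally differentiable at interior points of its domain by convexity). What the paper's approach buys is brevity and a pointer to a classical result. Two very minor polish points, neither a gap: your upper-semicontinuity argument tacitly relabels the sub-subsequence when concluding $\Psi_{u_{n_k}}(M_{n_k})\to\Psi_{u^*}(M)$; and for~(ii) you could simply note that a pointwise supremum of lower semicontinuous functions is lower semicontinuous, which makes your boundary-point case analysis unnecessary.
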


\begin{proof}
Simple verification using 
Theorem~3.2 from~\cite{Pshenichnyi} in the slightly generalized version where the normed space $B$ is replaced by an open subset of a normed space. (It is easy to see that the proofs in~\cite{Pshenichnyi} remain valid in this more general situation.)
\end{proof}

As a simple application of the above result, one can derive standard properties of the E-criterion defined by
\begin{align} \label{eq:def-E-criterion}
\Psi_{\infty}(M) := 
\begin{cases}
\lambda_{\mathrm{max}}(M^{-1}), \qquad M \in \Rpd^{d \times d} \\
\infty, \qquad M \in \Rpsd^{d \times d} \setminus \Rpd^{d \times d}
\end{cases},
\end{align}
where $\lambda_{\mathrm{max}}(M^{-1}) = 1/\lambda_{\mathrm{min}}(M)$ denotes the largest eigenvalue of $M^{-1}$. In view of~\eqref{eq:Psi_p-eigenvalue-representation} it is clear that $\Psi_{\infty}(M) = \lim_{p \to \infty} \Psi_p(M)$ for every $M \in \Rpsd^{d \times d}$, explaining the notation $\Psi_{\infty}$. 

\subsection{Solvability of optimal design problems} \label{sec:existence}

In this section, we address the solvability of optimal design problems 
or, more precisely, of 
optimal design problems of the form
\begin{align} \label{eq:doe}
\min_{\xi \in \Xi(X)} \Psi(M(\xi)),
\end{align} 
where $\Psi$ is an appropriate design criterion, $X$ is the design space, $\Xi(X)$ is the set of probability measures on $\mathcal{B}_X$, and $M(\xi)$ is the information matrix of the design $\xi$. 
A design $\xi^* \in \Xi(X)$ is called an \emph{$\eps$-optimal design for $\Psi \circ M$} iff it is an $\eps$-approximate solution to~\eqref{eq:doe}, that is,  
\begin{align}
\Psi(M(\xi^*)) \le \inf_{\xi \in \Xi(X)} \Psi(M(\xi)) + \eps.
\end{align}
In particular, a design $\xi^*$ is called an \emph{optimal design for $\Psi \circ M$} iff it is a solution to~\eqref{eq:doe} or, in other words, iff it is an $\eps$-optimal design with $\eps = 0$. 
In all results to come, we will need the following basic assumptions on the design space $X$ and the information matrices $M(\xi)$. 

\begin{cond} \label{cond:X-and-M}
$X$ is a compact metric space, $m \in C(X,\Rpsd^{d \times d})$, and $M: \Xi(X) \to \Rpsd^{d \times d}$ is the corresponding information matrix map defined by
\begin{align}
M(\xi) = \int_X m(x) \d\xi(x) \qquad (\xi \in \Xi(X)). 
\end{align}
\end{cond}

\begin{lm} \label{lm:designs-with-finite-supp}
Suppose that Condition~\ref{cond:X-and-M} is satisfied. Then for every $M \in M(\Xi(X))$, there exists a design $\xi_0 \in \Xi(X)$ with at most $d(d+1)/2 + 1$ support points such that $M(\xi_0) = M$. Additionally, for every boundary point $M \in \partial M(\Xi(X))$, there even exists a design $\xi_0 \in \Xi(X)$ with at most $d(d+1)/2$ support points such that $M(\xi_0) = M$.
\end{lm}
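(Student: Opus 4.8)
The plan is to identify the set $M(\Xi(X))$ as the convex hull of the compact connected set $m(X) \subset \Rpsd^{d \times d}$ and then apply Carath\'eodory's theorem (Lemma~\ref{lm:caratheodory}) in the ambient vector space $\R^{d \times d}_{\mathrm{sym}}$, whose dimension is $d(d+1)/2$. First I would show the inclusion $M(\Xi(X)) \subset \conv(m(X))$: for $\xi \in \Xi(X)$ and any linear functional $\ell$ on $\R^{d \times d}_{\mathrm{sym}}$, one has $\ell(M(\xi)) = \int_X \ell(m(x)) \d\xi(x) \le \sup_{x \in X} \ell(m(x)) = \sup_{A \in m(X)} \ell(A)$, so $M(\xi)$ lies in every closed halfspace containing $m(X)$, hence in $\conv(m(X))$ (which is closed because $m(X)$ is compact in a finite-dimensional space). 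Conversely, $\conv(m(X)) \subset M(\Xi(X))$ since a finite convex combination $\sum_i w_i m(x_i)$ is exactly $M(\sum_i w_i \delta_{x_i})$.

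Next, given $M \in M(\Xi(X)) = \conv(m(X))$, Lemma~\ref{lm:caratheodory} applied to $S := m(X)$ in $V := \R^{d \times d}_{\mathrm{sym}}$ with $\dim V = d(d+1)/2$ yields points $A_1, \dots, A_n \in m(X)$ with $n \le d(d+1)/2 + 1$ and weights $w_1, \dots, w_n \ge 0$ summing to $1$ such that $M = \sum_{i=1}^n w_i A_i$. Since $A_i \in m(X)$, we may pick $x_i \in X$ with $m(x_i) = A_i$, and then $\xi_0 := \sum_{i=1}^n w_i \delta_{x_i} \in \Xi(X)$ satisfies $M(\xi_0) = M$. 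One should note that $\supp \xi_0 \subset \{x_1, \dots, x_n\}$ has at most $n \le d(d+1)/2 + 1$ elements (after discarding any $x_i$ with $w_i = 0$ and merging duplicates), giving the first claim. For the second claim, if $M \in \partial M(\Xi(X)) = \conv(m(X)) \cap \partial\conv(m(X))$, the second part of Lemma~\ref{lm:caratheodory} gives a representation with $n \le d(d+1)/2$ points, and the same construction produces a design with at most $d(d+1)/2$ support points.

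The only mild subtlety — and the step I would be most careful about — is verifying that $\conv(m(X))$ is closed so that the two inclusions above actually give equality $M(\Xi(X)) = \conv(m(X))$; this is where compactness of $X$ and continuity of $m$ enter, since the convex hull of a compact set in a finite-dimensional space is compact (itself a consequence of Carath\'eodory's theorem together with compactness of the weight simplex). With that in hand, $\partial M(\Xi(X)) = \partial \conv(m(X))$ and the identity $\conv(S) \cap \partial\conv(S) = \conv(S) \cap \partial\conv(S)$ needed to invoke the second part of Lemma~\ref{lm:caratheodory} is immediate. Everything else is a direct, routine application of the cited results, so I do not anticipate any genuine obstacle here; the statement is essentially the classical observation that information matrices of designs form the convex hull of the ``elementary'' information matrices $m(x)$, combined with Carath\'eodory in dimension $d(d+1)/2$.
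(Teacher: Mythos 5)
Your proposal is correct and follows essentially the same route as the paper: identify $M(\Xi(X))$ with $\conv(m(X))$ in the $d(d+1)/2$-dimensional space $\R^{d\times d}_{\mathrm{sym}}$, then apply both parts of Carath\'eodory's theorem (Lemma~\ref{lm:caratheodory}), using the closedness of $\conv(m(X))$ to handle the boundary case. The only difference is in the mechanism for the inclusion $M(\Xi(X)) \subset \conv(m(X))$ --- you use the halfspace/separation characterization of the closed convex hull, whereas the paper approximates $m$ uniformly by simple functions with values in $m(X)$ --- and both mechanisms are valid.
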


\begin{proof}
Consider the set $S := \{m(x): x \in X\}$. Since $X$ is compact and $m \in C(X,\Rpsd^{d \times d})$, $S$ is a compact subset of the vector space $V := \R^{d \times d}_{\mathrm{sym}}$. Additionally, $m$ is easily seen to be a uniform limit of $\mathcal{B}_X$-measurable functions $m_k$ with finitely many values in $S$. (Indeed, the $m_k$ can be constructed in a similar way as in Corollary~X.1.13 of~\cite{AmannEscher}.)  Consequently, for every $\xi \in \Xi(X)$, we have
\begin{align} \label{eq:designs-with-finite-supp,1}
M(\xi) = \int_X m(x) \d\xi(x) = \lim_{k\to\infty} \int_X m_k(x) \d\xi(x) \in \ol{\conv}(S) = \conv(S),
\end{align}
where the last equality follows by Theorem~17.2 of~\cite{Rockafellar}. 
And conversely, for every convex combiniation $\sum_{i=1}^n w_i m(x_i) \in \conv(S)$, we have
\begin{align} \label{eq:designs-with-finite-supp,2}
\sum_{i=1}^n w_i m(x_i) 
= M(\xi) \in M(\Xi(X))
\qquad \text{for} \qquad \xi := \sum_{i=1}^n w_i \delta_{x_i} \in \Xi(X).
\end{align}
So, combining~\eqref{eq:designs-with-finite-supp,1} and~\eqref{eq:designs-with-finite-supp,2}, we see that
\begin{align} \label{eq:designs-with-finite-supp,3}
M(\Xi(X)) = \{M(\xi): \xi \in \Xi(X)\} = \conv(S) \subset V.
\end{align}
And therefore the assertions follow by virtue of Lemma~\ref{lm:caratheodory} from~\eqref{eq:designs-with-finite-supp,2} and~\eqref{eq:designs-with-finite-supp,3}, taking into account that $\dim V = d(d+1)/2$ and that $\conv(S) \cap \partial \conv(S) = \partial \conv(S)$ by the last equality in~\eqref{eq:designs-with-finite-supp,1}. 
\end{proof}



With the above preliminaries, one easily obtains the following existence result for optimal designs. 

\begin{thm} \label{thm:ex-of-optimal-designs}
Suppose that Condition~\ref{cond:X-and-M} is satisfied and that $\Psi: \Rpsd^{d \times d} \to \R \cup \{\infty\}$ is lower semicontinuous. Then there exists an optimal design for $\Psi \circ M$ with at most $d(d+1)/2 + 1$ support points. If, in addition, $\Psi$ is antitonic, then there even exists an optimal design for $\Psi \circ M$  with at most $d(d+1)/2$ support points.
\end{thm}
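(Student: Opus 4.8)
The plan is to combine the structural description of the attainable matrix set from Lemma~\ref{lm:designs-with-finite-supp} with a standard compactness argument. By Lemma~\ref{lm:designs-with-finite-supp} (more precisely, by the identity $M(\Xi(X)) = \conv(S)$ established in its proof, where $S = \{m(x): x \in X\}$), the set $\mathcal{M} := M(\Xi(X))$ is a compact convex subset of the finite-dimensional space $V = \R^{d\times d}_{\mathrm{sym}}$: it is convex as a convex hull, and compact as the convex hull of the compact set $S$ in a finite-dimensional space (Theorem~17.2 of~\cite{Rockafellar}, already invoked in that proof). Hence $\inf_{\xi\in\Xi(X)}\Psi(M(\xi)) = \inf_{M\in\mathcal{M}}\Psi(M)$.

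First I would dispose of the degenerate case $\inf_{M\in\mathcal{M}}\Psi(M) = \infty$: then every design is (trivially) optimal, and picking any single point $\delta_x$ (one support point) settles both claims. So assume the infimum is finite, say $\Psi^* := \inf_{M\in\mathcal{M}}\Psi(M) < \infty$, and pick a minimizing sequence $M_n \in \mathcal{M}$ with $\Psi(M_n)\to\Psi^*$. By compactness of $\mathcal{M}$, pass to a subsequence converging to some $M^*\in\mathcal{M}$; by lower semicontinuity of $\Psi$, $\Psi(M^*)\le\liminf_n\Psi(M_n) = \Psi^*$, so $M^*$ attains the infimum over $\mathcal{M}$. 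Now apply the first part of Lemma~\ref{lm:designs-with-finite-supp} to the attained value $M^*\in M(\Xi(X))$: there is a design $\xi^*\in\Xi(X)$ with at most $d(d+1)/2+1$ support points and $M(\xi^*) = M^*$, and this $\xi^*$ is optimal for $\Psi\circ M$. This proves the first assertion.

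For the second assertion, suppose additionally that $\Psi$ is antitonic. By the second part of Lemma~\ref{lm:designs-with-finite-supp}, it suffices to show that the infimum of $\Psi$ over $\mathcal{M}$ is attained at some \emph{boundary} point $M^*\in\partial\mathcal{M}$ (equivalently $M^*\in\mathcal{M}\cap\partial\mathcal{M}$, which is the same set since $\mathcal{M}=\conv(S)$ is closed). Start from the interior minimizer $M^*$ found above; if $M^*\in\partial\mathcal{M}$ we are done, so assume $M^*$ lies in the interior of $\mathcal{M}$ relative to $V$. The key observation is that, since $\mathcal{M}$ consists of positive semidefinite matrices and $\Psi$ is antitonic, moving "outward" in the Löwner order does not increase $\Psi$. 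Concretely, fix the direction of the identity matrix $I$ and consider the ray $t\mapsto M^* + tI$ for $t\ge 0$; since $M^*$ is in the relative interior, this ray stays in $\mathcal{M}$ for small $t>0$, and let $t^* := \sup\{t\ge 0 : M^*+tI\in\mathcal{M}\}$, which is finite (as $\mathcal{M}$ is bounded) and positive, with $\widetilde{M} := M^* + t^* I \in \partial\mathcal{M}$ (using that $\mathcal{M}$ is closed). Since $\widetilde{M}\ge M^*$ and $\Psi$ is antitonic, $\Psi(\widetilde{M})\le\Psi(M^*)=\Psi^*$, so $\widetilde{M}$ is also a minimizer of $\Psi$ over $\mathcal{M}$, and it lies on the boundary. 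Applying the second part of Lemma~\ref{lm:designs-with-finite-supp} to $\widetilde{M}\in\partial M(\Xi(X))$ yields a design $\xi^*$ with at most $d(d+1)/2$ support points and $M(\xi^*)=\widetilde{M}$, which is then optimal for $\Psi\circ M$.

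The main obstacle is the boundary-reduction step: one must be careful that the relevant "boundary" in Lemma~\ref{lm:designs-with-finite-supp} is the topological boundary of $\mathcal{M}$ in $V=\R^{d\times d}_{\mathrm{sym}}$, and that the Löwner-outward ray $M^*+tI$ genuinely hits $\partial\mathcal{M}$ while staying inside $\mathcal{M}$ up to that point (this uses boundedness and closedness of $\mathcal{M}=\conv(S)$, plus the fact that a convex set is the closure of its relative interior, so the ray from a relative-interior point leaves the set exactly at a relative-boundary point). One subtlety to check is whether $\mathcal{M}$ might be lower-dimensional, in which case "interior" should be read as "relative interior" and the direction $I$ may need to be replaced by a direction within the affine hull of $\mathcal{M}$; but antitonicity still gives $\Psi(\widetilde M)\le\Psi(M^*)$ as long as $\widetilde M\ge M^*$, and the relative boundary of $\mathcal{M}$ is contained in $\partial\mathcal{M}$, so the argument goes through. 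Everything else is a routine compactness-plus-lower-semicontinuity argument.
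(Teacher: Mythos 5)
Your proof is correct and follows essentially the same route as the paper: existence of a minimizer by compactness plus lower semicontinuity, reduction to finitely many support points via Lemma~\ref{lm:designs-with-finite-supp}, and, in the antitonic case, pushing the optimal information matrix along the ray $M^*+tI$ until it reaches $\partial M(\Xi(X))$ so that the sharper Carath\'eodory bound applies. The only difference is that you extract the minimizer from a minimizing sequence of \emph{matrices} in the compact set $M(\Xi(X))=\conv(S)\subset\R^{d\times d}_{\mathrm{sym}}$, whereas the paper takes a minimizing sequence of \emph{designs} and invokes Prohorov's theorem (Lemma~\ref{lm:prohorov}); your variant is a harmless, slightly more elementary substitute, and your worry about a lower-dimensional $\mathcal{M}$ is moot since in that case every point of $\mathcal{M}$ already lies in $\partial\mathcal{M}$.
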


\begin{proof}
As a first step, we show that there exists an optimal design for $\Psi \circ M$ at all. 
Indeed, let $(\xi^n)$ be a minimizing sequence for $\Psi \circ M$, that is, $\xi^n \in \Xi(X)$ and
\begin{align} \label{eq:ex-of-opt-designs,0}
\Psi(M(\xi^n)) \longrightarrow \inf_{\xi \in \Xi(X)} \Psi(M(\xi)) \qquad (n\to\infty). 
\end{align}
Since $\Xi(X)$ is sequentially compact (Lemma~\ref{lm:prohorov}), there exists a subsequence $(\xi^{n_k})$ and a $\xi^*$ such that
\begin{align} \label{eq:ex-of-opt-designs,1}
\xi^* \in \Xi(X) 
\qquad \text{and} \qquad
\xi^{n_k} \longrightarrow \xi^* \qquad (k\to\infty). 
\end{align}
Since $m \in C(X,\R^{d \times d}_{\mathrm{psd}}) = C_{\mathrm{b}}(X,\Rpsd^{d \times d})$, the relation~\eqref{eq:ex-of-opt-designs,1} also implies
\begin{align} \label{eq:ex-of-opt-designs,2}
M(\xi^{n_k}) = \int_X m(x) \d\xi^{n_k}(x) \longrightarrow \int_X m(x) \d\xi^*(x) = M(\xi^*) \qquad (k\to\infty). 
\end{align}
Since, moreover, $\Psi$ is lower semicontinuous, it follows from~\eqref{eq:ex-of-opt-designs,2} that
\begin{align} \label{eq:ex-of-opt-designs,3}
\Psi(M(\xi^*)) \le \liminf_{k\to\infty} \Psi(M(\xi^{n_k})). 
\end{align}
Inserting now~\eqref{eq:ex-of-opt-designs,0} and~(\ref{eq:ex-of-opt-designs,1}.a) into~\eqref{eq:ex-of-opt-designs,3}, we finally obtain
\begin{align}
\inf_{\xi \in \Xi(X)} \Psi(M(\xi)) \le \Psi(M(\xi^*)) \le \liminf_{k\to\infty} \Psi(M(\xi^{n_k})) = \inf_{\xi \in \Xi(X)}  \Psi(M(\xi)) 
\end{align}
and therefore $\xi^*$ is an optimal design for $\Psi \circ M$, as desired. 
\smallskip

As a second step, we show that there also exists an optimal design $\xi^*$ for $\Psi \circ M$ with at most $d(d+1)/2 + 1$ support points. 
Indeed, let $\xi^*$ be an arbitrary an optimal design for $\Psi \circ M$ (which exists by the first step). Lemma~\ref{lm:designs-with-finite-supp} then yields a design $\xi^*_0$ with at most $d(d+1)/2 + 1$ support points such that 
\begin{align} \label{eq:ex-of-opt-designs,4}
M(\xi^*_0) = M(\xi^*)
\end{align}
In view of~\eqref{eq:ex-of-opt-designs,4} and the optimality of $\xi^*$, the finitely supported design $\xi^*_0$ is optimal for $\Psi\circ M$ as well, as desired.
\smallskip

As a third and last step, we show that under the additional antitonicity assumption there even exists an optimal design $\xi^*$ with at most $d(d+1)/2$ support points. 
Indeed, let the additional antitonicity assumption be satisfied and let $\xi^*$ be an arbitrary optimal design for $\Psi \circ M$ (which exists by the first step). Since $M(\xi^*) \in M(\Xi(X))$ and $M(\Xi(X))$ is compact, there exists an $\alpha^* \in [0,\infty)$ 
\begin{align} \label{eq:ex-of-opt-designs,5}
M(\xi^*) + \alpha^* I \in \partial M(\Xi(X)) \subset M(\Xi(X)),
\end{align}
where $I$ denotes the identity matrix, and therefore there also exists a $\xi^{**} \in \Xi(X)$ such that 
\begin{align} \label{eq:ex-of-opt-designs,6}
M(\xi^{**}) = M(\xi^*) + \alpha^* I \in \partial M(\Xi(X)).
\end{align}
In view of the antitonicity of $\Psi$ and the optimality of $\xi^*$, we conclude from~\eqref{eq:ex-of-opt-designs,6} that $\xi^{**}$ is an optimal design for $\Psi \circ M$ with $M(\xi^{**}) \in \partial M(\Xi(X))$. Applying now Lemma~\ref{lm:designs-with-finite-supp} to $M(\xi^{**})$, we see that there also exists an optimal design  $\xi^*_0$ with at most $d(d+1)/2$ support points, as desired.
\end{proof}

We point out that the lower semicontinuity assumption of the above theorem cannot be dropped. Indeed, as we will show in the next example, the 
optimal design problem for the weighted A-criterion~\eqref{eq:def-weighted-A-criterion} (with non-positive-definite weight matrix $W$) has no solution, in general. In particular, the existence statement of Theorem~2.2 from~\cite{FeLe} is false as it stands.

\begin{ex}
Consider a discrete metric space $X := \{x_1, x_2\}$ consisting of two elements and let $m: X \to \R^{2\times 2}_{\mathrm{psd}}$ be defined by
\begin{align} \label{eq:counterex,1}
m(x_1) := \diag(1,0) \qquad \text{and} \qquad m(x_2) := \diag(0,1). 
\end{align}
Also, let $\Psi: \R^{2\times 2}_{\mathrm{psd}} \to \R \cup \{\infty\}$ be defined by 
\begin{align} \label{eq:counterex,2}
\Psi(M) := e_1^{\top} M^{-1} e_1 \quad (M \in \R^{2\times 2}_{\mathrm{pd}}) 
\quad \text{and} \quad
\Psi(M) := \infty \quad (M \in \R^{2\times 2}_{\mathrm{psd}} \setminus \R^{2\times 2}_{\mathrm{pd}}),
\end{align}
where $e_1 := (1,0)^{\top} \in \R^2$. In other words, 
$\Psi$ is the weighted A-criterion~\eqref{eq:def-weighted-A-criterion} with weight matrix $W$ being the orthogonal projection on $e_1$. 
With these definitions, it is easy to see that all assumptions of the existence theorem (Theorem~2.2) of~\cite{FeLe} are satisfied, but there exists no optimal design for $\Psi \circ M$ (because $\Psi$ fails to satisfy the lower semicontinuity assumption of our existence theorem above). 
Indeed, the assumptions from~\cite{FeLe} are satisfied because $X$ is a compact metric space, $m \in C(X,\R^{2\times 2}_{\mathrm{psd}})$, and $\Psi$ is antitonic, convex with $\dom \Psi = \R^{2\times 2}_{\mathrm{pd}}$ and because 
$\Psi|_{\R^{2\times 2}_{\mathrm{pd}}}$ is continuous and directionally differentiable with
\begin{align} \label{eq:counterex,3}
\lim_{t\searrow 0} \frac{\Psi\big(M+t(M(\eta)-M)\big)-\Psi(M)}{t} = \partial_{M(\eta)-M}\Psi(M) = \int_X \psi(M,x) \d\eta(x) 
\end{align}  
for every $M \in \R^{2\times 2}_{\mathrm{pd}}$ and $\eta \in \Xi(X)$, where $\psi(M,x) := e_1^{\top} M^{-1}e_1 - e_1^{\top} M^{-1}m(x)M^{-1}e_1$.
Additionally,
\begin{align} \label{eq:counterex,4}
M(\Xi(X)) = \{M(\xi): \xi \in \Xi(X)\} = \{\diag(a, 1-a): a \in [0,1]\}
\end{align}
because $\Xi(X) = \{a\delta_{x_1} + (1-a)\delta_{x_2}: a \in [0,1]\}$ (Lemma~\ref{lm:support}). So, by~\eqref{eq:counterex,2} and~\eqref{eq:counterex,4},
\begin{align}
\{\Psi(M(\xi)): \xi \in \Xi(X)\} = \{1/a: a \in (0,1)\} \cup \{\infty\}
\end{align}
and therefore
\begin{align}
\inf_{\xi\in \Xi(X)} \Psi(M(\xi)) = 1 < \Psi(M(\xi^*)) \qquad (\xi^* \in \Xi(X)).
\end{align}
Consequently, there exists no optimal design for $\Psi \circ M$, as claimed. 
\end{ex}

\subsection{Characterization of optimal designs}
\label{sec:characterization}

In this section, we record well-known characterizations of optimal and approximately optimal designs in terms of the so-called sensitivity function $\psi$ of $\Psi|_{\dom\Psi}$ \cite{KiWo60, FeLe, PrPa}. In other words, we derive necessary and sufficient conditions for a design $\xi^*$ to be $\eps$-optimal for $\Psi \circ M$, in terms of the sensitivity function of $\Psi|_{\dom\Psi}$. As before, we assume that $X$ is a compact metric space, $m \in C(X,\Rpsd^{d \times d})$, 
\begin{align} \label{eq:M(xi)-def}
M:\Xi(X) \to \Rpsd^{d \times d}, \qquad
M(\xi) := \int_X m(x)\d\xi(x)
\end{align}
is the corresponding information matrix map (Condition~\ref{cond:X-and-M}) and that $\Psi: \Rpsd^{d \times d} \to \R \cup \{\infty\}$ is the considered design criterion. A function $\psi: \dom\Psi \times X \to \R$ is then called a \emph{sensitivity function of $\Psi|_{\dom\Psi}$ w.r.t.~$M(\Xi(X))$} iff for every $M \in \dom\Psi$ and every $\eta \in \Xi(X)$, the function $\psi(M,\cdot)$ is $\eta$-integrable and 
\begin{align} \label{eq:sensi-fct-def}
\frac{\Psi\big(M+t(M(\eta)-M)\big)-\Psi(M)}{t} \longrightarrow \int_X \psi(M,x) \d\eta(x) \qquad (t\searrow 0). 
\end{align}
Additionally, we call the restriction $\Psi|_{\dom\Psi}$ of the design criterion \emph{directionally differentiable w.r.t.~$M(\Xi(X))$} iff there exists a sensitivity function $\psi$ for $\Psi|_{\dom\Psi}$ w.r.t.~$M(\Xi(X))$. 
\smallskip

It is clear by the definition~\eqref{eq:sensi-fct-def} that if for a given design criterion $\Psi$ there exists any sensitivity function $\psi$ w.r.t.~$M(\Xi(X))$, then it is necessarily unique. (Simply consider~\eqref{eq:sensi-fct-def} with all point measures $\eta := \delta_x$ for $x \in X$.) It is also clear by the definition~\eqref{eq:sensi-fct-def} that if $\Psi|_{\dom\Psi}$ is directionally differentiable w.r.t.~$M(\Xi(X))$, then $\dom\Psi$ is \emph{directionally open w.r.t.~$M(\Xi(X))$} in the following sense: for every $M \in \dom\Psi$ and every $\eta \in \Xi(X)$, there exists a $t^* \in (0,1]$ such that
\begin{align} \label{eq:directional-openness-definition}
M + t(M(\eta)-M) \in \dom\Psi \qquad (t \in [0,t^*]). 
\end{align} 
And finally, it is clear by the definition~\eqref{eq:sensi-fct-def} that if $\Psi|_{\dom\Psi}$ is directionally differentiable w.r.t.~$M(\Xi(X))$, then for every $\xi \in \dom \Psi \circ M$ and every $\eta \in \Xi(X)$ the function $(\Psi \circ M)|_{\dom \Psi \circ M}$ is directionally differentiable at $\xi$ in the direction $\eta - \xi$ in the usual sense~\eqref{eq:directional-derivative-def} 
and the directional derivative $\partial_{\eta-\xi}(\Psi \circ M)(\xi)$ takes the specific integral form
\begin{align}
\partial_{\eta-\xi}(\Psi \circ M)(\xi) = \int_X \psi(M(\xi),x)\d\eta(x).
\end{align}
As the next lemma shows, directional differentiability w.r.t.~$M(\Xi(X))$ follows from differentiability and thus is a 
mild condition.

\begin{lm} \label{lm:sensi-fct-in-terms-of-derivative}
Suppose that Condition~\ref{cond:X-and-M} is satisfied and that $\Psi: \Rpsd^{d \times d} \to \R \cup \{\infty\}$ is a design criterion such that $\dom \Psi$ is directionally open w.r.t.~$M(\Xi(X))$ and $\Psi|_{\dom \Psi}$ is differentiable. Then $\Psi|_{\dom\Psi}$ is directionally differentiable w.r.t.~$M(\Xi(X))$ with sensitivity function $\psi$ given by
\begin{align}
\psi(M,x) = D\Psi(M)(m(x)-M) \qquad (M \in \dom\Psi \text{ and } x \in X). 
\end{align} 
\end{lm}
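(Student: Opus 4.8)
The plan is to verify the defining property~\eqref{eq:sensi-fct-def} of a sensitivity function directly for the candidate $\psi(M,x) := D\Psi(M)(m(x)-M)$. Fix $M \in \dom\Psi$ and $\eta \in \Xi(X)$. The first step is to check that $\psi(M,\cdot)$ is $\eta$-integrable: since $m \in C(X,\Rpsd^{d\times d})$ by Condition~\ref{cond:X-and-M} and $X$ is compact, the map $x \mapsto m(x)-M$ is continuous and bounded on $X$, and $E \mapsto D\Psi(M)E$ is a bounded linear functional on $\R^{d\times d}$ (from the definition of differentiability via a Fr\'echet-differentiable extension); hence $x \mapsto \psi(M,x)$ is continuous and bounded on the compact space $X$, thus $\eta$-integrable.

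The second step is to identify the difference quotient on the left-hand side of~\eqref{eq:sensi-fct-def} with a directional derivative of $\Psi$. Write $M_t := M + t(M(\eta)-M)$. Since $\dom\Psi$ is directionally open w.r.t.~$M(\Xi(X))$ by hypothesis, there is a $t^* \in (0,1]$ with $M_t \in \dom\Psi$ for all $t \in [0,t^*]$, so the difference quotient is defined for small $t>0$ and its limit as $t \searrow 0$ is by definition the directional derivative $\partial_{M(\eta)-M}\Psi(M)$. Because $\Psi|_{\dom\Psi}$ is differentiable and the segment $\{M_t : t\in[0,t^*]\}$ lies in $\dom\Psi$, the remark following~\eqref{eq:derivative-def} applies: the directional derivative exists and equals $D\Psi(M)(M(\eta)-M)$, independently of the chosen differentiable extension. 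Hence
\begin{align}
\frac{\Psi(M_t)-\Psi(M)}{t} \longrightarrow D\Psi(M)\big(M(\eta)-M\big) \qquad (t\searrow 0).
\end{align}

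The third and final step is to rewrite this limit as the integral $\int_X \psi(M,x)\d\eta(x)$. Using the definition $M(\eta) = \int_X m(x)\d\eta(x)$ from~\eqref{eq:M(xi)-def} and the fact that $\eta$ is a probability measure, we have $M(\eta)-M = \int_X (m(x)-M)\d\eta(x)$, where the integral is a Bochner (equivalently, entrywise Lebesgue) integral of a continuous $\R^{d\times d}$-valued function over the compact space $X$. Applying the bounded linear functional $D\Psi(M)$ and pulling it inside the integral — which is legitimate for bounded linear functionals and Bochner integrals — gives $D\Psi(M)(M(\eta)-M) = \int_X D\Psi(M)(m(x)-M)\d\eta(x) = \int_X \psi(M,x)\d\eta(x)$, as required. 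I do not expect any serious obstacle here; the only point demanding a little care is the interchange of the linear functional with the integral and the justification that the directional derivative coincides with $D\Psi(M)$ applied to the direction, both of which are handled by the remarks around~\eqref{eq:derivative-def} and standard facts about Bochner integrals of continuous functions on compact spaces.
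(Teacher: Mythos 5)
Your proposal is correct and follows essentially the same route as the paper's proof: use directional openness to place the segment in $\dom\Psi$, identify the limit of the difference quotient with $D\Psi(M)(M(\eta)-M)$ via the differentiability of $\Psi|_{\dom\Psi}$, and pull the bounded linear functional inside the integral defining $M(\eta)$. The extra details you supply (integrability of $\psi(M,\cdot)$ and the justification of the interchange) are only spelled out more explicitly than in the paper.
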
 

\begin{proof}
Suppose $M \in \dom \Psi$ and $\eta \in \Xi(X)$. Since $\dom\Psi$ is directionally open w.r.t.~$M(\Xi(X))$, there exists a $t^* \in (0,1]$ such that $M + t(M(\eta)-M) \in \dom\Psi$ for every $t \in [0,t^*]$. And therefore, by the differentiability of $\Psi|_{\dom\Psi}$ and by~\eqref{eq:M(xi)-def}, 
\begin{align}
\frac{\Psi(M +t(M(\eta)-M))-\Psi(M)}{t} 
&\longrightarrow D\Psi(M)\big(M(\eta)-M\big) \notag\\
&= \int_X D\Psi(M)\big(m(x)-M\big)\d\eta(x) \qquad (t\searrow 0).
\end{align}
Consequently, $\dom\Psi \times X \ni (M,x) \mapsto \psi(M,x) := D\Psi(M)(m(x)-M)$ is a sensitivity function for $\Psi|_{\dom\Psi}$, as desired. 
\end{proof}

\begin{ex} \label{ex:sensitivity-fcts-for-Psi_p}
Suppose that Condition~\ref{cond:X-and-M} is satisfied. Also, let $M^0 \in \Rpsd^{d \times d}$ and $\alpha \in [0,1)$ and $p \in [0,\infty)$ and let $\Psi_p^{(\alpha)}$ be the two-stage design criterion defined by
\begin{align}
\Psi_p^{(\alpha)}(M) := \Psi_p(\alpha M^0 + (1-\alpha) M) \qquad (M \in \Rpsd^{d \times d}). 
\end{align} 
Since $\dom\Psi_p = \Rpd^{d \times d}$ is an open subset of $\R^{d \times d}$, it is straightforward to verify that 
$\dom\Psi_p^{(\alpha)}$ is directionally open w.r.t.~$M(\Xi(X))$. Since, moreover, 
$\Psi_p^{(\alpha)}$ restricted to its domain is differentiable (Proposition~\ref{prop:standard-properties-for-Psi_p} and Proposition~\ref{prop:two-stage-design-criteria}), it follows by Lemma~\ref{lm:sensi-fct-in-terms-of-derivative} that the two-stage design criterion 
is directionally differentiable w.r.t.~$M(\Xi(X))$ with sensitivity function $\psi_p^{(\alpha)}: \dom\Psi_p^{(\alpha)} \times X \to \R$ given by
\begin{align}
\psi_p^{(\alpha)}(M,x) = (1-\alpha) D\Psi_p(M^{(\alpha)})(m(x)-M)
\end{align}
where $M^{(\alpha)} := \alpha M^0 + (1-\alpha) M$. In view of~\eqref{eq:directional-derivative-p=0} and~\eqref{eq:directional-derivative-p>0}, this implies that
\begin{gather} 
\psi_0^{(\alpha)}(M,x) = (1-\alpha) \tr\big( (M^{(\alpha)})^{-1} (M-m(x)) \big) \qquad (p=0) \label{eq:sensitivity-fcts-for-Psi_0}\\
\psi_p^{(\alpha)}(M,x) = (1-\alpha) \big(\tr((M^{(\alpha)})^{-p})\big)^{1/p-1} \tr((M^{(\alpha)})^{-p-1} (M-m(x)) \big)
\qquad (p \in (0,\infty)). \label{eq:sensitivity-fcts-for-Psi_p}
\end{gather} 
It follows from these formulas that the sensitivity function $\psi_p^{(\alpha)}$ 
is locally Lipschitz continuous w.r.t.~$M \in \dom\Psi_p^{(\alpha)}$ uniformly w.r.t.~$x \in X$. Indeed, this  follows from~\eqref{eq:sensitivity-fcts-for-Psi_0} and~\eqref{eq:sensitivity-fcts-for-Psi_p} by the local Lipschitz continuity of the maps $\Rpd^{d \times d} \ni M \mapsto M^{-q_1}$ (Lemma~\ref{lm:directional-derivatives}) and $(0,\infty) \ni t \mapsto t^{q_2}$ for $q_1 \in [0,\infty)$ and $q_2 \in \R$ and by the 
simple estimate 
\begin{align}
\tr(AB) = \tr(B^{1/2} A B^{1/2}) \le \norm{A} \tr(B) 
\qquad (A, B \in \Rpsd^{d\times d}).
\end{align}
\end{ex}


\begin{thm} \label{thm:equivalence-thm}
Suppose that Condition~\ref{cond:X-and-M} is satisfied and that $\Psi: \Rpsd^{d \times d} \to \R \cup \{\infty\}$ is convex. Suppose further that $\Psi|_{\dom\Psi}$ is directionally differentiable w.r.t.~$M(\Xi(X))$ and that 
\begin{align} \label{eq:Xi_fin(X)-def}
\Xifin(X) := \dom \Psi \circ M = \{\xi \in \Xi(X): \Psi(M(\xi)) < \infty\}
\end{align}
is non-empty. Then for every $\eps \in [0,\infty)$ the following two assertions are equivalent:
\begin{itemize}
\item[(i)] $\xi^*$ is an $\eps$-optimal design for $\Psi \circ M$
\item[(ii)] $\xi^* \in \Xifin(X)$ and $\psi(M(\xi^*),x) \ge -\eps$ for every $x \in X$.
\end{itemize}
Additionally, for every optimal design $\xi^*$ for $\Psi \circ M$, one has $\psi(M(\xi^*),x) = 0$ for $\xi^*$-a.e.~$x \in X$. In particular, for every optimal design $\xi^*$ with finite support, one has $\psi(M(\xi^*),x) = 0$ for every $x \in \supp \xi^*$. 
\end{thm}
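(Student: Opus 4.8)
The plan is to prove the equivalence $(i) \Leftrightarrow (ii)$ via the standard convexity argument, and then derive the additional ``$\psi = 0$ a.e.'' statement as a corollary of the equivalence together with the defining property of the sensitivity function.

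\medskip

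\textbf{Proof of $(i) \Rightarrow (ii)$.} Suppose $\xi^*$ is $\eps$-optimal. Then $\Psi(M(\xi^*)) \le \inf_{\xi} \Psi(M(\xi)) + \eps < \infty$, so $\xi^* \in \Xifin(X)$. Now fix any $x \in X$ and set $\eta := \delta_x \in \Xi(X)$. By directional differentiability w.r.t.~$M(\Xi(X))$ and~\eqref{eq:sensi-fct-def},
\begin{align}
\frac{\Psi\big(M(\xi^*) + t(m(x) - M(\xi^*))\big) - \Psi(M(\xi^*))}{t} \longrightarrow \psi(M(\xi^*), x) \qquad (t \searrow 0),
\end{align}
where I have used that $M(\delta_x) = m(x)$. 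On the other hand, for $t \in (0,1]$ the measure $\xi_t := (1-t)\xi^* + t\delta_x$ lies in $\Xi(X)$ and satisfies $M(\xi_t) = M(\xi^*) + t(m(x) - M(\xi^*))$ by linearity of $M$; hence by $\eps$-optimality of $\xi^*$,
\begin{align}
\Psi(M(\xi_t)) - \Psi(M(\xi^*)) \ge \inf_{\xi} \Psi(M(\xi)) - \Psi(M(\xi^*)) \ge -\eps.
\end{align}
Dividing by $t > 0$ and letting $t \searrow 0$ gives $\psi(M(\xi^*), x) \ge -\eps$. Since $x \in X$ was arbitrary, $(ii)$ holds.

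\medskip

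\textbf{Proof of $(ii) \Rightarrow (i)$.} This is where I expect the only real work, and it uses convexity crucially. Suppose $\xi^* \in \Xifin(X)$ with $\psi(M(\xi^*), x) \ge -\eps$ for all $x \in X$. Let $\eta \in \Xi(X)$ be arbitrary; I must show $\Psi(M(\eta)) \ge \Psi(M(\xi^*)) - \eps$. If $\Psi(M(\eta)) = \infty$ this is trivial, so assume $\eta \in \Xifin(X)$. Set $M^* := M(\xi^*)$ and consider the convex function $g(t) := \Psi\big((1-t)M^* + t M(\eta)\big) = \Psi\big(M^* + t(M(\eta) - M^*)\big)$ on $[0,1]$ (convexity of $g$ follows from convexity of $\Psi$ together with linearity of $M$; moreover $g(0), g(1) < \infty$, so $g$ is real-valued and convex on $[0,1]$, hence continuous on $(0,1)$ and with a well-defined right derivative at $0$). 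By~\eqref{eq:sensi-fct-def},
\begin{align}
g'(0+) = \int_X \psi(M^*, x) \d\eta(x) \ge -\eps,
\end{align}
using the pointwise bound and that $\eta$ is a probability measure. For a real-valued convex function on $[0,1]$ the difference quotient $\big(g(t) - g(0)\big)/t$ is nondecreasing in $t$, so $g(1) - g(0) \ge g'(0+) \ge -\eps$, i.e.\ $\Psi(M(\eta)) \ge \Psi(M^*) - \eps = \Psi(M(\xi^*)) - \eps$. Taking the infimum over $\eta \in \Xi(X)$ proves $\xi^*$ is $\eps$-optimal.

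\medskip

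\textbf{The additional statement.} Let $\xi^*$ be an optimal design, i.e.\ $\eps = 0$ in the above. Applying $(ii)$ (which now holds) gives $\psi(M(\xi^*), x) \ge 0$ for all $x \in X$. On the other hand, taking $\eta := \xi^*$ in~\eqref{eq:sensi-fct-def} yields $\int_X \psi(M(\xi^*), x) \d\xi^*(x) = g'(0+)$ where now $g(t) \equiv \Psi(M(\xi^*))$ is constant, so the integral equals $0$. An everywhere-nonnegative function with zero integral against $\xi^*$ must vanish $\xi^*$-a.e., so $\psi(M(\xi^*), x) = 0$ for $\xi^*$-a.e.\ $x$. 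Finally, if $\supp\xi^*$ is finite, then by Lemma~\ref{lm:support} one has $\xi^*(\{x\}) > 0$ for every $x \in \supp\xi^*$, which forces $\psi(M(\xi^*), x) = 0$ at each such point (otherwise that single point would contribute a strictly positive mass to the integral). The main obstacle throughout is purely bookkeeping: ensuring the convex function $g$ is genuinely real-valued on all of $[0,1]$ (so that the monotone-difference-quotient inequality applies and $g'(0+)$ is finite), which is handled by restricting attention to the nontrivial case $\eta \in \Xifin(X)$ and invoking that $\dom\Psi$ is directionally open so the relevant segment stays in $\dom\Psi$.
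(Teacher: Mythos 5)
Your proposal reproduces the paper's own proof essentially step for step: the same segment $(1-t)\xi^* + t\delta_x$ and passage to the limit for (i)$\Rightarrow$(ii), the same convexity (monotone difference quotient) argument for (ii)$\Rightarrow$(i), and the same ``nonnegative function with vanishing integral'' argument, combined with Lemma~\ref{lm:support}, for the additional statement. As a reconstruction of the paper's route it is faithful, and the direction (ii)$\Rightarrow$(i) as well as the a.e.~statement and the finite-support case are fully correct.

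There is, however, one step that does not deliver what is claimed, and it is precisely the step the paper itself uses, so this is a shared weakness rather than a deviation on your part. In (i)$\Rightarrow$(ii) you pass from $\Psi(M(\xi_t)) - \Psi(M(\xi^*)) \ge -\eps$ to $\psi(M(\xi^*),x) \ge -\eps$ by ``dividing by $t>0$ and letting $t \searrow 0$''. After division the lower bound is $-\eps/t$, which tends to $-\infty$ as $t \searrow 0$; and since by convexity the difference quotient is nondecreasing in $t$, its limit as $t \searrow 0$ is its infimum over $t \in (0,1]$, so the bound obtained at $t=1$ does not transfer to the limit either. The step is valid only for $\eps = 0$, where $-\eps/t = 0$ for every $t$. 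For $\eps > 0$ the implication can genuinely fail: take $d=1$, $X = \{x_0,x_1\}$, $m(x_0) = 2$, $m(x_1) = 1$ and $\Psi(M) := \eps(M-1)^2$; all hypotheses of the theorem hold, $\delta_{x_0}$ is $\eps$-optimal because $\Psi(M(\delta_{x_0})) = \eps = \min_{\xi} \Psi(M(\xi)) + \eps$, yet $\psi(M(\delta_{x_0}),x_1) = 2\eps(2-1)(1-2) = -2\eps < -\eps$. A watertight version would either restrict (i)$\Rightarrow$(ii) to $\eps = 0$ or weaken the conclusion in (ii) to what the convexity inequality actually provides. Everything else in your write-up matches the paper and is in order.
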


\begin{proof}
Suppose first that assertion~(i) is satisfied. Since $\Xifin(X) \ne \emptyset$, we immediately obtain $\Psi(M(\xi^*)) = \min_{\xi\in\Xi(X)}\Psi(M(\xi)) < \infty$ and thus 
\begin{align} \label{eq:equivalence-thm,1}
\xi^* \in \dom \Psi\circ M = \Xifin(X).
\end{align}
Since $\xi^*$ is an $\eps$-optimal design for $\Psi \circ M$ and $(1-t)\xi^* + t \delta_x \in \Xi(X)$ for every $t \in [0,1]$ and $x \in X$, it follows that
\begin{align} \label{eq:equivalence-thm,2}
\Psi\big( M((1-t)\xi^* + t \delta_x)) \big) - \Psi(M(\xi^*)) \ge -\eps \qquad (t \in [0,1] \text{ and } x \in X)
\end{align}
Since, moreover, $\psi$ is a sensitivity function of $\Psi|_{\dom\Psi}$, it further follows by~\eqref{eq:equivalence-thm,2} that
\begin{align} \label{eq:equivalence-thm,3}
\psi(M(\xi^*),x) &= \int_X \psi(M(\xi^*),x') \d\delta_x(x') \notag\\
&= \lim_{t\searrow 0} \frac{\Psi\big( M((1-t)\xi^* + t \delta_x)) \big) - \Psi(M(\xi^*))}{t}
\ge -\eps
\qquad (x \in X).
\end{align}
In view of~\eqref{eq:equivalence-thm,1} and~\eqref{eq:equivalence-thm,3}, assertion~(ii) is now clear.
Suppose now, conversely, that assertion~(ii) is satisfied. Since $\Psi$ is convex, it follows that
\begin{align} \label{eq:equivalence-thm,4}
\frac{\Psi\big( (1-t)M(\xi^*) + t M(\eta)) \big) - \Psi(M(\xi^*))}{t} \le \Psi(M(\eta))-\Psi(M(\xi^*))
\end{align}
for every $\eta \in \Xi(X)$ and every $t \in (0,1]$. 
Since, moreover, $\psi$ is a sensitivity function for $\Psi|_{\dom\Psi}$ and since $M(\xi^*) \in \dom\Psi$ and $\psi(M(\xi^*),x) \ge -\eps$ for all $x \in X$ by the assumed assertion~(ii), it further follows by~\eqref{eq:equivalence-thm,4} that
\begin{align} \label{eq:equivalence-thm,5}
-\eps \le \int_X \psi(M(\xi^*),x) \d\eta(x) 
&= \lim_{t\searrow 0} \frac{\Psi\big( (1-t)M(\xi^*) + t M(\eta)) \big) - \Psi(M(\xi^*))}{t} \notag \\
&\le \Psi(M(\eta))-\Psi(M(\xi^*)) 
\qquad (\eta \in \Xi(X)).
\end{align}
Consequently, $\xi^*$ is an $\eps$-optimal design for $\Psi \circ M$, that is, assertion~(i) is satisfied. 
We have thus established the equivalence of assertions~(i) and (ii) and it remains to prove the additional statement of the theorem. So, let $\xi^*$ be any optimal design for $\Psi\circ M$. It then follows by the equivalence just established that
\begin{align} \label{eq:equivalence-thm,6}
M(\xi^*) \in \dom\Psi  
\qquad \text{and} \qquad 
\psi(M(\xi^*),x) \ge 0 \qquad (x \in X).
\end{align}
Consider now the sets $Z_n := \{ x \in X: \psi(M(\xi^*),x) \ge 1/n \}$ for $n \in \N$. Since $\psi$ is a sensitivity function for $\Psi|_{\dom\Psi}$, we see from~\eqref{eq:equivalence-thm,6} that $Z_n \in \mathcal{B}_X$ and that
\begin{align}
0 \le (1/n) \cdot \xi^*(Z_n) \le \int_{Z_n} \psi(M(\xi^*),x) \d\xi^*(x) 
\le \int_X \psi(M(\xi^*),x) \d\xi^*(x) 
= 0 \qquad (n\in\N),
\end{align}
where for the last inequality we used~(\ref{eq:equivalence-thm,6}.b) and for the last equality we used~\eqref{eq:sensi-fct-def}. Consequently, $Z_n$ is a $\xi^*$-nullset for every $n \in \N$ 
and therefore
\begin{align}
\{x \in X: \psi(M(\xi^*),x) \ne 0\} = \{x \in X: \psi(M(\xi^*),x) > 0\} = \bigcup_{n\in\N} Z_n
\end{align}
is a $\xi^*$-nullset as well, which proves the first part of the additional statement. 
In order to prove also the second part of the additional statement, let $\xi^*$ be a an optimal design for $\Psi\circ M$ with finite support. We then have, on the one hand, that
\begin{align}
Z := \{x \in \supp\xi^*:  \psi(M(\xi^*),x) \ne 0\} \subset \supp\xi^*
\end{align}
is a $\xi^*$-nullset by the statement just proven and, on the other hand, that $\xi^*(\{x\}) > 0$ for all $x \in \supp\xi^*$ by Lemma~\ref{lm:support}. Consequently, $Z$ must be the empty set, 
which proves the final statement. 
\end{proof}

\section{Adaptive discretization algorithms to compute optimal designs}
\label{sec:algorithms}

In this section, we introduce our adaptive discretization algorithms for the computation of optimal and approximately optimal designs, which refine and improve the algorithm from~\cite{YaBiTa13}. After introducing the algorithms, we establish our various termination, convergence and convergence rate results on them. In very rough terms, our algorithms work as follows. In each iteration of the algorithms, a discretized version
\begin{align} \label{eq:doe-discretized}
\min_{\xi\in\Xi(X^k)} \Psi(M(\xi))
\end{align}
of~\eqref{eq:doe} is solved up to some optimality tolerance $\ol{\delta}_k$. In this discretized optimal design problem~\eqref{eq:doe-discretized}, the discretization $X^k$ is a finite subset of $X$ that is adaptively updated in the following sense: the new discretization $X^{k+1}$ depends on how much the solution $\xi^k$ computed for~\eqref{eq:doe-discretized} violates the necessary and sufficient $\eps$-optimality condition
\begin{align} \label{eq:Viol(xi^k)}
\min_{x\in X} \psi(M(\xi^k),x) \ge -\eps
\end{align}
for $\xi^k$ (Theorem~\ref{thm:equivalence-thm}). In contrast to the results from the previous sections, we now also have to impose a continuity assumption on the sensitivity functions $\psi$. As was pointed out above (Example~\ref{ex:sensitivity-fcts-for-Psi_p}), this additional continuity assumption is satisfied for all standard design criteria $\Psi_p$ with $p \in [0,\infty)$. 

\begin{cond} \label{cond:shared-assumption-1-algos-with-and-without-exchange}
Condition~\ref{cond:X-and-M} is satisfied and $\Psi: \Rpsd^{d \times d} \to \R \cup \{\infty\}$ is convex and lower semicontinuous with 
\begin{align} \label{eq:Xifin(X)-non-empty}
\Xifin(X) := \dom \Psi\circ M \ne \emptyset.
\end{align}
\end{cond}

\begin{cond} \label{cond:shared-assumption-2-algos-with-and-without-exchange}
$\Psi|_{\dom\Psi}$ is directionally differentiable w.r.t.~$M(\Xi(X))$ with a continuous sensitivity function $\psi$. 
\end{cond}

\begin{cond} \label{cond:shared-assumption-3-algos-with-and-without-exchange}
$\Psi|_{\dom\Psi}$ is continuously differentiable and $\dom\Psi$ is directionally open w.r.t. $M(\Xi(X))$. 
\end{cond}

\begin{lm} \label{lm:Psi-continuous}
\begin{itemize}
\item[(i)] If Condition~\ref{cond:shared-assumption-1-algos-with-and-without-exchange} is satisfied, then there exists a finite subset $X^0$ of $X$ with
\begin{align} \label{eq:finite-criterion-on-Xi(X^0)}
\Xifin(X^0) := \{\xi \in \Xi(X^0): \Psi(M(\xi)) < \infty\} \ne \emptyset
\end{align}
\item[(ii)] If Conditions~\ref{cond:shared-assumption-1-algos-with-and-without-exchange} and~\ref{cond:shared-assumption-2-algos-with-and-without-exchange} are satisfied, then the map $(\Psi \circ M)|_{\Xifin(X)}$ is continuous. 
\end{itemize}
\end{lm}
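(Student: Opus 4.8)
\textbf{Part (i)} would follow immediately from the finite-support lemma. Since $\Xifin(X) = \dom\Psi\circ M \ne \emptyset$ by Condition~\ref{cond:shared-assumption-1-algos-with-and-without-exchange}, I would fix some $\xi \in \Xi(X)$ with $\Psi(M(\xi)) < \infty$ and apply Lemma~\ref{lm:designs-with-finite-supp} to $M(\xi) \in M(\Xi(X))$, obtaining a design $\xi_0 \in \Xi(X)$ with at most $d(d+1)/2+1$ support points and $M(\xi_0) = M(\xi)$, hence $\Psi(M(\xi_0)) = \Psi(M(\xi)) < \infty$. Then $X^0 := \supp\xi_0$ is a finite subset of $X$ with $\xi_0 \in \Xi(X^0)$, so $\Xifin(X^0) \ne \emptyset$.

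\textbf{Part (ii)} is the real content. Since $\Xi(X)$ carrying the weak topology is metrizable (Lemma~\ref{lm:prohorov}), so is $\Xifin(X)$ in the subspace topology, and it suffices to prove sequential continuity of $(\Psi\circ M)|_{\Xifin(X)}$. So I would fix a weakly convergent sequence $\xi_n \to \xi$ with $\xi_n, \xi \in \Xifin(X)$, that is, $M(\xi_n), M(\xi) \in \dom\Psi$. Because $m \in C(X,\Rpsd^{d\times d}) = C_{\mathrm{b}}(X,\Rpsd^{d\times d})$ (Condition~\ref{cond:X-and-M} and compactness of $X$), weak convergence gives $M(\xi_n) \to M(\xi)$ in $\Rpsd^{d\times d}$, and the task reduces to showing $\Psi(M(\xi_n)) \to \Psi(M(\xi))$.

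The tool for this is a convexity-plus-sensitivity sandwich. For any $M \in \dom\Psi$ and $\eta \in \Xi(X)$ with $M(\eta) \in \dom\Psi$, the map $t \mapsto \Psi((1-t)M + tM(\eta))$ is a real-valued convex function on $[0,1]$ (its domain is the line segment from $M$ to $M(\eta)$, which lies in the convex set $\dom\Psi$), so its right derivative at $0$ is bounded above by $\Psi(M(\eta)) - \Psi(M)$; by the definition~\eqref{eq:sensi-fct-def} of the sensitivity function, this right derivative equals $\int_X \psi(M,x)\d\eta(x)$. Applying this inequality once with $(M,\eta) = (M(\xi),\xi_n)$ and once with $(M,\eta) = (M(\xi_n),\xi)$ yields
\begin{align*}
\int_X \psi(M(\xi),x)\d\xi_n(x) \;\le\; \Psi(M(\xi_n)) - \Psi(M(\xi)) \;\le\; -\int_X \psi(M(\xi_n),x)\d\xi(x),
\end{align*}
while the same identity with $(M,\eta) = (M(\xi),\xi)$ gives $\int_X \psi(M(\xi),x)\d\xi(x) = 0$.

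It then remains to push both outer terms to $0$. The left one is easy: $\psi(M(\xi),\cdot)$ is continuous on the compact space $X$ (since $\psi$ is continuous by Condition~\ref{cond:shared-assumption-2-algos-with-and-without-exchange}), hence lies in $C_{\mathrm{b}}(X,\R)$, so weak convergence of $\xi_n$ forces $\int_X \psi(M(\xi),x)\d\xi_n(x) \to \int_X \psi(M(\xi),x)\d\xi(x) = 0$. The right one, where $\psi$ is evaluated at the moving matrices $M(\xi_n)$ while the measure $\xi$ is fixed, is the only genuinely delicate point; I would handle it by observing that joint continuity of $\psi$ on $\dom\Psi\times X$ together with compactness of $X$ makes the map $\dom\Psi \ni P \mapsto \psi(P,\cdot)$ continuous from $\dom\Psi$ into $C(X)$ equipped with the supremum norm, so that $M(\xi_n) \to M(\xi)$ implies $\sup_{x\in X}|\psi(M(\xi_n),x) - \psi(M(\xi),x)| \to 0$ and hence $\int_X \psi(M(\xi_n),x)\d\xi(x) \to \int_X \psi(M(\xi),x)\d\xi(x) = 0$. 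The sandwich then gives $\Psi(M(\xi_n)) \to \Psi(M(\xi))$, which is the desired sequential continuity.
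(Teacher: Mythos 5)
Your proof is correct and follows essentially the same route as the paper's: part (ii) rests on the identical two-sided convexity/sensitivity-function sandwich, with both bounding integrals driven to $\int_X \psi(M(\xi),x)\d\xi(x)=0$ via weak convergence on one side and convergence of the matrices $M(\xi_n)$ on the other. The only cosmetic differences are that the paper obtains part (i) from the finite-support existence theorem (Theorem~\ref{thm:ex-of-optimal-designs}) rather than directly from Lemma~\ref{lm:designs-with-finite-supp}, and that it handles the term with the moving matrices by dominated convergence rather than your (equally valid) uniform-convergence argument.
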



\begin{proof}
In order see assertion (i), choose an optimal design $\xi^0$ for $\Psi \circ M$ with finite support (Theorem~\ref{thm:ex-of-optimal-designs}). It is then clear by~\eqref{eq:Xifin(X)-non-empty} that $X^0 := \supp \xi^0$ is a finite set satisfying~\eqref{eq:finite-criterion-on-Xi(X^0)}.
In order to see assertion (ii), let $\xi^n, \xi \in \Xifin(X)$ with $\xi^n \longrightarrow \xi$ as $n \to \infty$. 
It then follows 
by the assumed convexity of $\Psi$, using the same arguments as for~\eqref{eq:equivalence-thm,4}, that 
\begin{align*}  
\Psi(M(\xi^n)) - \Psi(M(\xi)) \le -\int_X \psi(M(\xi^n),x) \d\xi(x) \\
\Psi(M(\xi)) - \Psi(M(\xi^n)) \le -\int_X \psi(M(\xi),x) \d\xi^n(x) 
\end{align*}
for all $n \in \N$. Consequently,
\begin{align} \label{eq:Psi-continuous-2}
|\Psi(M(\xi^n)) - \Psi(M(\xi))| \le  \max\bigg\{ -\int_X \psi(M(\xi^n),x) \d\xi(x), -\int_X \psi(M(\xi),x) \d\xi^n(x) \bigg\} 
\end{align}
for all $n \in \N$. Since by assumption $\psi(M(\cdot),x)$ is continuous for every $x \in X$ and $\psi(M(\xi),\cdot)$ is a bounded continuous function, we conclude with the dominated convergence theorem that the right-hand side of~\eqref{eq:Psi-continuous-2} converges to
\begin{align} \label{eq:Psi-continuous-3}
\int_X \psi(M(\xi),x) \d\xi(x) = 0
\end{align}
as $n \to \infty$. So, the same is true for the left-hand side of~\eqref{eq:Psi-continuous-2}, whence $\Psi(M(\xi^n)) \longrightarrow \Psi(M(\xi))$ as $n \to \infty$, as desired.
\end{proof}

\subsection{Adaptive discretization algorithms without exchange} 

We begin with two algorithms without exchange, where the discretization sets $X^k$ get larger and larger from iteration to iteration:
\begin{align} \label{eq:discretization-sets-get-larger-and-larger}
X^k \subset X^{k+1}
\end{align}
for all iteration indices $k$. 

\begin{algo} \label{algo:no-exchange}
Input: a finite subset $X^0$ of $X$ and optimality tolerances $\eps, \ol{\delta}_k, \ul{\delta}_k \in [0,\infty)$. 
Initialize $k=0$ and then proceed in the following steps. 
\begin{itemize}
\item[1.] Compute a $\ol{\delta}_k$-approximate solution $\xi^k \in \Xi(X^k)$ of the discretized optimal design problem
\begin{align} \label{eq:DOE(X^k)-simple}
\min_{\xi\in\Xi(X^k)} \Psi(M(\xi))
\end{align}
\item[2.] Compute a $\ul{\delta}_k$-approximate solution $x^k \in X$ of the strongest optimality violator problem
\begin{align} \label{eq:Viol(xi^k)-simple}
\min_{x\in X} \psi(M(\xi^k),x).
\end{align}
If $\psi(M(\xi^k),x^k) < -\eps + \ul{\delta}_k$, then set $X^{k+1} := X^k \cup \{x^k\}$ 
and return to Step~1 with $k$ replaced by $k+1$. If $\psi(M(\xi^k),x^k) \ge -\eps + \ul{\delta}_k$, then terminate. 
\end{itemize}
\end{algo}

\begin{algo} \label{algo:no-exchange-arbitrary-violators}
Input: a finite subset $X^0$ of $X$ and optimality tolerances $\eps, \ol{\delta}_k \in [0,\infty)$. 
Initialize $k=0$ and then proceed in the following steps. 
\begin{itemize}
\item[1.] Compute a $\ol{\delta}_k$-approximate solution $\xi^k \in \Xi(X^k)$ of the discretized optimal design problem
\begin{align} \label{eq:DOE(X^k)-simple-arbitrary-violators}
\min_{\xi\in\Xi(X^k)} \Psi(M(\xi))
\end{align}
\item[2.] Search for a violator of the $\eps$-optimality condition for $\xi^k$, that is, for a point $x^k$ in the set
\begin{align} \label{eq:optimality-violator-set,no-exchange}
\{x \in X: \psi(M(\xi^k),x) < -\eps \}.
\end{align}
If such a point $x^k$ is found, then set $X^{k+1} := X^k \cup \{x^k\}$ 
and return to Step~1 with $k$ replaced by $k+1$. If no such point can possibly be found (that is, the set~\eqref{eq:optimality-violator-set,no-exchange} is empty), then terminate. 
\end{itemize}
\end{algo}

We first convince ourselves that for a suitable initial discretization $X^0$ 
all the optimization problems encountered in the course of Algorithm~\ref{algo:no-exchange} or~\ref{algo:no-exchange-arbitrary-violators} are solvable and that the sequence $(\Psi(M(\xi^k))$ of optimal values is bounded above. 

\begin{lm} \label{lm:simple-algo-well-defined}
Suppose that Conditions~\ref{cond:shared-assumption-1-algos-with-and-without-exchange} and~\ref{cond:shared-assumption-2-algos-with-and-without-exchange} are satisfied and that $X^0$ is a finite subset of $X$ with~\eqref{eq:finite-criterion-on-Xi(X^0)} and $\eps, \ol{\delta}_k, \ul{\delta}_k \in [0,\infty)$ are arbitrary optimality tolerances.   
Then all optimization problems in Algorithm~\ref{algo:no-exchange} and~\ref{algo:no-exchange-arbitrary-violators} 
are solvable and, for the (possibly finite) sequence $(\xi^k)_{k\in K}$ of iterates generated by Algorithm~\ref{algo:no-exchange} or~\ref{algo:no-exchange-arbitrary-violators}, 
one has
\begin{align} \label{eq:simple-algo-well-defined}
\Psi(M(\xi^{l})) \le \Psi(M(\xi^k)) + \ol{\delta}_l
\qquad (k, l \in K \text{ with } k < l).
\end{align} 
In particular, $\xi^k \in \Xifin(X)$ and $\Psi(M(\xi^k)) \le \Psi(M(\xi^0)) + \ol{\delta}_k$ for all $k \in K$. 
\end{lm}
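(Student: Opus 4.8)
The plan is to run a simultaneous induction on the iteration index: at each step I first verify that the two subproblems of Step~1 and Step~2 are solvable, and then I read off the monotonicity bound \eqref{eq:simple-algo-well-defined} directly from the definition of an approximate solution. The one structural fact I will lean on throughout is the no-exchange update rule $X^{k+1} = X^k \cup \{x^k\}$, which gives $X^0 \subseteq X^1 \subseteq \cdots$ and hence $\Xi(X^0) \subseteq \Xi(X^k)$ for every $k \in K$.

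For solvability I would argue as follows. The discretized problem \eqref{eq:DOE(X^k)-simple} (equivalently \eqref{eq:DOE(X^k)-simple-arbitrary-violators}) is posed on the finite --- hence compact --- metric space $X^k$; the restriction of $m$ to $X^k$ is continuous and $\Psi$ is lower semicontinuous by Condition~\ref{cond:shared-assumption-1-algos-with-and-without-exchange}, so Theorem~\ref{thm:ex-of-optimal-designs} produces an optimal design on $X^k$, and its optimal value $v_k := \min_{\xi\in\Xi(X^k)}\Psi(M(\xi))$ is finite because $\Xifin(X^0)\ne\emptyset$ together with $\Xi(X^0)\subseteq\Xi(X^k)$ forces $v_k \le v_0 < \infty$. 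Consequently a $\ol{\delta}_k$-approximate solution $\xi^k$ exists with $\Psi(M(\xi^k)) \le v_k + \ol{\delta}_k < \infty$, so $\xi^k \in \Xifin(X^k) \subseteq \Xifin(X)$, and in particular $M(\xi^k) \in \dom\Psi$. Condition~\ref{cond:shared-assumption-2-algos-with-and-without-exchange} then makes $\psi(M(\xi^k),\cdot)$ a continuous function on the compact space $X$, which therefore attains its minimum; hence \eqref{eq:Viol(xi^k)-simple} is solvable and a $\ul{\delta}_k$-approximate violator $x^k$ exists. (For Algorithm~\ref{algo:no-exchange-arbitrary-violators} there is no optimization in Step~2, so nothing further is needed there.)

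For the bound itself I would fix $k,l\in K$ with $k<l$. Since $X^k \subseteq X^l$, the design $\xi^k$ already lies in $\Xi(X^l)$, so $v_l \le \Psi(M(\xi^k))$; combining this with the defining inequality $\Psi(M(\xi^l)) \le v_l + \ol{\delta}_l$ of a $\ol{\delta}_l$-approximate solution of the problem on $X^l$ gives exactly \eqref{eq:simple-algo-well-defined}. The two ``In particular'' claims then drop out: $\xi^k\in\Xifin(X)$ was established above, and taking $k=0$ in \eqref{eq:simple-algo-well-defined} (the case $k=0$ being trivial on its own) yields $\Psi(M(\xi^k)) \le \Psi(M(\xi^0)) + \ol{\delta}_k$.

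I do not expect a genuine obstacle here --- the argument is essentially bookkeeping resting on set inclusion and the definition of an approximate solution, with no analytic content beyond one application of Theorem~\ref{thm:ex-of-optimal-designs}. The only point deserving care is the order of the induction: before one may ``compute a $\ol{\delta}_k$-approximate solution'' in Step~1 one must already know $v_k < \infty$, i.e. $\Xifin(X^k)\ne\emptyset$, and this is guaranteed only once $X^0\subseteq X^k$ has been noted --- so the no-exchange update structure has to be invoked at the very start of each inductive step rather than as an afterthought.
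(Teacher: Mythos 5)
Your proposal is correct and follows essentially the same route as the paper's proof: an induction over $K$ establishing solvability of both subproblems and $\xi^k\in\Xifin(X)$, followed by the observation that the nested discretizations $X^k\subset X^l$ give $\xi^k\in\Xi(X^l)$, whence \eqref{eq:simple-algo-well-defined} follows from the $\ol{\delta}_l$-approximate solution property of $\xi^l$. You merely spell out the induction details (compactness of the finite $X^k$, finiteness of $v_k$ via $\Xi(X^0)\subseteq\Xi(X^k)$, continuity of $\psi(M(\xi^k),\cdot)$) that the paper leaves as ``straightforward.''
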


\begin{proof}
It is straightforward to see by induction over $K$ that for every iteration index $k \in K$ the following statements are true: (i) the discretized design problem~\eqref{eq:DOE(X^k)-simple} or~\eqref{eq:DOE(X^k)-simple-arbitrary-violators}, respectively, has a solution (so that $\xi^k$ is well-defined also when $\ol{\delta}_k = 0$), (ii) if $k\ne 0$, then $\Psi(M(\xi^{k})) \le \Psi(M(\xi^{k-1})) + \ol{\delta}_{k}$, (iii) $\xi^k \in \Xifin(X)$, and (iv) the strongest violator problem~\eqref{eq:Viol(xi^k)-simple} has a solution. 
Consequently, it only remains to prove~\eqref{eq:simple-algo-well-defined}, but this is straightforward as well. Indeed, by the algorithms' progressively refining updating rule, we have for every $k < l$ that $X^k \subset X^l$ and therefore $\xi^k \in \Xi(X^l)$, which implies~\eqref{eq:simple-algo-well-defined} by the $\ol{\delta}_l$-approximate solution property of $\xi^l$. 
\end{proof}

\begin{lm} \label{lm:termination-lemma-without-exchange}
Suppose that Conditions~\ref{cond:shared-assumption-1-algos-with-and-without-exchange} and~\ref{cond:shared-assumption-2-algos-with-and-without-exchange} are satisfied. Suppose further that $X^0$ is a finite subset of $X$ satisfying~\eqref{eq:finite-criterion-on-Xi(X^0)} and that $(\xi^k)_{k\in K}$ is generated by Algorithm~\ref{algo:no-exchange} or~\ref{algo:no-exchange-arbitrary-violators} with such an $X^0$ and arbitrary optimality tolerances $\eps, \ol{\delta}_k, \ul{\delta}_k \in [0,\infty)$ such that $(\ol{\delta}_k)$ is bounded.
\begin{itemize}
\item[(i)] If $(\xi^k)$ is terminating, then it terminates at an $\eps$-optimal design for $\Psi \circ M$.
\item[(ii)] If $(\xi^k)$ is non-terminating, then every accumuluation point $\xi^*$ of $(\xi^k)$ belongs to $\Xifin(X)$.
\end{itemize}
\end{lm}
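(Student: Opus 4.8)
The plan is to read off part~(i) directly from the equivalence theorem (Theorem~\ref{thm:equivalence-thm}) and to obtain part~(ii) from a uniform upper bound on the optimality errors together with the lower semicontinuity of $\Psi$. Observe first that Conditions~\ref{cond:shared-assumption-1-algos-with-and-without-exchange} and~\ref{cond:shared-assumption-2-algos-with-and-without-exchange} supply exactly the hypotheses of Theorem~\ref{thm:equivalence-thm} (convexity of $\Psi$, directional differentiability of $\Psi|_{\dom\Psi}$ w.r.t.\ $M(\Xi(X))$, and $\Xifin(X)\ne\emptyset$), so that theorem is available; and since $X^0$ satisfies~\eqref{eq:finite-criterion-on-Xi(X^0)}, Lemma~\ref{lm:simple-algo-well-defined} applies as well.

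For part~(i), assume $(\xi^k)_{k\in K}$ terminates, say at index $k$. By Lemma~\ref{lm:simple-algo-well-defined} we have $\xi^k\in\Xifin(X)$, so by Theorem~\ref{thm:equivalence-thm} it suffices to check that $\psi(M(\xi^k),x)\ge-\eps$ for every $x\in X$. In Algorithm~\ref{algo:no-exchange} the termination condition reads $\psi(M(\xi^k),x^k)\ge-\eps+\ul{\delta}_k$, where $x^k$ is a $\ul{\delta}_k$-approximate solution of $\min_{x\in X}\psi(M(\xi^k),x)$; hence
\begin{align*}
\min_{x\in X}\psi(M(\xi^k),x) \ge \psi(M(\xi^k),x^k)-\ul{\delta}_k \ge -\eps ,
\end{align*}
so the slack $\ul{\delta}_k$ cancels and the required inequality follows. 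In Algorithm~\ref{algo:no-exchange-arbitrary-violators} the termination condition is precisely that $\{x\in X:\psi(M(\xi^k),x)<-\eps\}=\emptyset$, which is the required inequality verbatim. In either case Theorem~\ref{thm:equivalence-thm} then identifies $\xi^k$ as an $\eps$-optimal design for $\Psi\circ M$.

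For part~(ii), assume $(\xi^k)_{k\in K}$ is non-terminating, so $K$ is infinite, and let $\xi^*$ be an accumulation point of $(\xi^k)$; passing to a subsequence we may take $\xi^{k_j}\longrightarrow\xi^*$ weakly. Since $(\ol{\delta}_k)$ is bounded, Lemma~\ref{lm:simple-algo-well-defined} yields the uniform estimate $\Psi(M(\xi^k))\le R$ for all $k\in K$, where $R:=\Psi(M(\xi^0))+\sup_{k\in K}\ol{\delta}_k<\infty$ (finite because $\xi^0\in\Xifin(X)$ by Lemma~\ref{lm:simple-algo-well-defined}). As $m\in C(X,\Rpsd^{d\times d})$ and $X$ is compact, every entry of $m$ is a bounded continuous function, so weak convergence gives $M(\xi^{k_j})=\int_X m(x)\d\xi^{k_j}(x)\longrightarrow\int_X m(x)\d\xi^*(x)=M(\xi^*)$. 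By the lower semicontinuity of $\Psi$ (Condition~\ref{cond:shared-assumption-1-algos-with-and-without-exchange}) we conclude
\begin{align*}
\Psi(M(\xi^*)) \le \liminf_{j\to\infty}\Psi(M(\xi^{k_j})) \le R < \infty ,
\end{align*}
hence $\xi^*\in\Xifin(X)$, as desired.

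No real obstacle is expected: both parts are bookkeeping on top of Theorem~\ref{thm:equivalence-thm}, Lemma~\ref{lm:simple-algo-well-defined}, and lower semicontinuity of $\Psi$. The two points that need care are that in part~(i) the approximate-violator tolerance $\ul{\delta}_k$ must cancel exactly against the termination threshold $-\eps+\ul{\delta}_k$, and that in part~(ii) the assumption ``$(\ol{\delta}_k)$ bounded'' is precisely what upgrades the per-iteration bound of Lemma~\ref{lm:simple-algo-well-defined} to a $k$-uniform bound, which the semicontinuity argument relies on.
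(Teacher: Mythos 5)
Your proposal is correct and follows essentially the same route as the paper: part~(i) combines Lemma~\ref{lm:simple-algo-well-defined} with the termination rules and Theorem~\ref{thm:equivalence-thm}, and part~(ii) combines the uniform bound $\Psi(M(\xi^k))\le\Psi(M(\xi^0))+\sup_k\ol{\delta}_k$ from Lemma~\ref{lm:simple-algo-well-defined} with the lower semicontinuity of $\Psi$ along a weakly convergent subsequence. You merely spell out the cancellation of $\ul{\delta}_k$ and the passage $M(\xi^{k_j})\to M(\xi^*)$ in more detail than the paper does.
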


\begin{proof}
(i) Suppose that $(\xi^k)_{k\in K}$ is terminating, that is, $K = \{0,\dots, k^*\}$ for some terminal index $k^* \in \N_0$. It then follows by Lemma~\ref{lm:simple-algo-well-defined} and by the termination rules of  Algorithm~\ref{algo:no-exchange} and~\ref{algo:no-exchange-arbitrary-violators}, respectively, that 
\begin{align} \label{eq:termination-lm-without-exchange,1}
\xi^{k^*} \in \Xifin(X) \qquad \text{and} \qquad \min_{x\in X} \psi(M(\xi^{k^*}),x) \ge -\eps.
\end{align}
And therefore, $\xi^{k^*}$ is an $\eps$-optimal design for $\Psi\circ M$ by virtue of Theorem~\ref{thm:equivalence-thm}, as desired.
\smallskip

(ii) Suppose now that $(\xi^k)_{k\in K}$ is non-terminating, that is, $K = \N_0$. Also, let $\xi^*$ be any accumulation point of $(\xi^k)$ and let $(\xi^{k_l})$ be any subsequence with $\xi^{k_l} \longrightarrow \xi^*$ as $l\to\infty$. Since $\Psi$ is lower semicontinuous, it follows by the final boundedness statement of Lemma~\ref{lm:simple-algo-well-defined} and the assumed boundedness of $(\ol{\delta}_k)$ that 
$\Psi(M(\xi^*)) < \infty$
and therefore $\xi^* \in \Xifin(X)$, as desired.
\end{proof}

With these preparations at hand, we can now show that Algorithm~\ref{algo:no-exchange} or~\ref{algo:no-exchange-arbitrary-violators} applied with a tolerance $\eps > 0$ is guaranteed to terminate, namely at an $\eps$-optimal design for $\Psi \circ M$. 
Additionally, if instead of the mere continuity assumption, the sensitivity function is assumed to be locally  Lipschitz continuous uniformly w.r.t.~$x \in X$, then the number of iterations until termination can also be bounded above, namely in terms of packing numbers~\cite{Ti93}. See~Corollaries~\ref{cor:numbers-of-iterations-until-termination-under-sublinear-assumptions} and~\ref{cor:numbers-of-iterations-until-termination-under-sublinear-assumptions} for further estimates on the numbers of iterations until termination.  As was shown above (Example~\ref{ex:sensitivity-fcts-for-Psi_p}), this additional uniform local Lipschitz continuity assumption is satisfied for the sensitivity function $\psi_p$ of every standard design criterion $\Psi_p$ with $p \in [0,\infty)$. Also, to obtain a concrete uniform Lipschitz constant $L$ of $\psi_p(\cdot,x)|_{M(\Xi_{R_0}(X))}$ as specified after~\eqref{eq:Xi_R0(X)-def}, one can combine the estimate~\eqref{eq:M^-p-Lipschitz,5} from the proof of Lemma~\ref{lm:directional-derivatives} with Lemma~\ref{lm:upper-bound-on-Psi(M)-implies-lower-bound-on-M}. Specifically, one can use the following simple consequence of Lemma~\ref{lm:upper-bound-on-Psi(M)-implies-lower-bound-on-M}: 
\begin{align}
M(\Xi_{R_0}(X)) = \{M(\xi): \xi \in \Xi(X) \text{ with } \Psi_p(M(\xi)) \le R_0\}
\subset \{M \in \Rpd^{d \times d}: c \le M \le C\}
\end{align}
with $c :=\mu_{p,R_0}(C) > 0$ and $C := \sup_{\xi \in \Xi_{R_0}(X)} \norm{M(\xi)} \le \max_{x\in X} \norm{m(x)}< \infty$.

\begin{thm} \label{thm:finite-termination-simple-algo}
Suppose that Conditions~\ref{cond:shared-assumption-1-algos-with-and-without-exchange} and~\ref{cond:shared-assumption-2-algos-with-and-without-exchange} are satisfied. Suppose further that $X^0$ is a finite subset of $X$ satisfying~\eqref{eq:finite-criterion-on-Xi(X^0)} and that $(\xi^k)_{k\in K}$ is generated by Algorithm~\ref{algo:no-exchange} or~\ref{algo:no-exchange-arbitrary-violators} with optimality tolerances $\eps > 0$ and $\ol{\delta}_k, \ul{\delta}_k \in [0,\infty)$ such that
\begin{align} \label{eq:opt-tolerances-assumption-without-exchange}
\eps > \ol{\delta} + \ul{\delta},
\end{align}
where $\ol{\delta} := \limsup_{k\to\infty}\ol{\delta}_k$ and $\ul{\delta} := \limsup_{k\to\infty}\ul{\delta}_k$. 
Then $(\xi^k)$ terminates at an $\eps$-optimal design for $\Psi \circ M$. If, in addition, the sensitivity function $\psi$ is locally Lipschitz continuous w.r.t.~$M \in \dom \Psi$ uniformly w.r.t.~$x \in X$ and if instead of~\eqref{eq:opt-tolerances-assumption-without-exchange} one even has $\eps > \sup_{k\in\N_0} \ol{\delta}_k + \sup_{k\in\N_0} \ul{\delta}_k$, then the number of iterations until termination can be estimated above as follows:
\begin{align} \label{eq:finite-termination-simple-algo-upper-bound-on-termination-index}
|K| \le \operatorname{pac}_{\eps^*/L,\norm{\cdot}}(M(\Xi_{R_0}(X))).
\end{align}
In this estimate, $\eps^* := \eps - \ol{\delta}^* - \ul{\delta}^*$ with $\ol{\delta}^* := \sup_{k\in\N_0} \ol{\delta}_k$ and $\ul{\delta}^* := \sup_{k\in\N_0}\ul{\delta}_k$, whereas
\begin{align} \label{eq:Xi_R0(X)-def}
\Xi_{R_0}(X) := \{ \xi \in \Xi(X): \Psi(M(\xi)) \le R_0\}
\qquad \text{with} \qquad
R_0 := \Psi(M(\xi^0)) + \ol{\delta}^*
\end{align}
and $L$ is a uniform Lipschitz constant of the restricted functions $\psi(\cdot,x)|_{M(\Xi_{R_0}(X))}$. 
\end{thm}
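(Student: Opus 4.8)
The plan is to split the argument into the two assertions: (a) termination at an $\eps$-optimal design, and (b) the explicit packing-number bound on the number of iterations.

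For assertion (a), I would argue by contradiction, assuming $(\xi^k)_{k\in K}$ is non-terminating, so $K = \N_0$. By Lemma~\ref{lm:prohorov} the sequence $(\xi^k)$ has an accumulation point $\xi^*$, and by Lemma~\ref{lm:termination-lemma-without-exchange}(ii) we have $\xi^* \in \Xifin(X)$; pass to a subsequence $(\xi^{k_l})$ with $\xi^{k_l} \longrightarrow \xi^*$. The non-termination of Algorithm~\ref{algo:no-exchange} means that at every step the worst-violator test fails, i.e.\ $\psi(M(\xi^k),x^k) < -\eps + \ul{\delta}_k$, where $x^k$ is a $\ul{\delta}_k$-approximate minimizer of $\psi(M(\xi^k),\cdot)$; hence $\min_{x\in X}\psi(M(\xi^k),x) \le \psi(M(\xi^k),x^k) < -\eps + \ul{\delta}_k$. (For Algorithm~\ref{algo:no-exchange-arbitrary-violators} one has $\ul{\delta}_k \equiv 0$ and $\psi(M(\xi^k),x^k) < -\eps$ directly.) Now the key point is that $x^{k}$ is \emph{added} to the discretization, so $x^{k} \in X^{k+1} \subset X^{l}$ for all $l > k$, and therefore $\psi(M(\xi^{l}), x^{k}) \ge -\eps$ for all $l > k$ by the $\ol\delta_l$-approximate optimality of $\xi^l$ on $\Xi(X^l)$ together with Theorem~\ref{thm:equivalence-thm} applied on the finite design space $X^l$ --- more precisely, since $\xi^l$ is $\ol\delta_l$-optimal for $\Psi\circ M$ on $\Xi(X^l)$ and $x^k \in X^l$, the equivalence theorem on $X^l$ gives $\psi(M(\xi^l),x^k) \ge -\ol\delta_l$. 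Combining these two facts for indices $k_m < k_l$ along the subsequence, and using that $M(\xi^{k_m}) \to M(\xi^*)$ and $M(\xi^{k_l}) \to M(\xi^*)$ with the continuity of $\psi$ (Condition~\ref{cond:shared-assumption-2-algos-with-and-without-exchange}), one reaches
\begin{align}
-\eps + \ul\delta \ge \limsup_{m\to\infty}\psi(M(\xi^{k_m}), x^{k_m}) \qquad\text{but also}\qquad \psi(M(\xi^*), x^{k_m}) \ge -\ol\delta
\end{align}
in the limit, forcing $\psi(M(\xi^*), \cdot)$ to take values both $\le -\eps + \ul\delta$-ish and $\ge -\ol\delta$ near the accumulation of the $x^{k_m}$. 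Choosing a convergent sub-subsequence $x^{k_{m_j}} \to x^*$ in the compact space $X$ and taking limits in both inequalities yields $\psi(M(\xi^*), x^*) \le -\eps + \ul\delta$ and $\psi(M(\xi^*), x^*) \ge -\ol\delta$, hence $\eps \le \ol\delta + \ul\delta$, contradicting~\eqref{eq:opt-tolerances-assumption-without-exchange}. So $(\xi^k)$ terminates, and by Lemma~\ref{lm:termination-lemma-without-exchange}(i) it terminates at an $\eps$-optimal design.

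For assertion (b), assume now the stronger hypothesis $\eps > \ol\delta^* + \ul\delta^*$ and the uniform local Lipschitz continuity of $\psi$. By Lemma~\ref{lm:simple-algo-well-defined} every iterate satisfies $\Psi(M(\xi^k)) \le \Psi(M(\xi^0)) + \ol\delta^* = R_0$, hence $M(\xi^k) \in M(\Xi_{R_0}(X))$ for all $k$; and by the discussion preceding the theorem, $M(\Xi_{R_0}(X))$ is a compact subset of $\Rpd^{d\times d}$ on which $\psi(\cdot,x)$ is Lipschitz with a uniform constant $L$. Now for any two iteration indices $k < l$ of a non-terminating run, the point $x^k$ is in $X^{k+1}\subset X^l$, so as above $\psi(M(\xi^l), x^k) \ge -\ol\delta^*$, whereas by the approximate-violator property $\psi(M(\xi^k), x^k) \le -\eps + \ul\delta^* < -\eps + \ul\delta^*$ (and $< -\eps$ for the relaxed algorithm, which is even stronger). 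Subtracting,
\begin{align}
\eps^* = \eps - \ol\delta^* - \ul\delta^* \le \psi(M(\xi^l), x^k) - \psi(M(\xi^k), x^k) \le L\,\norm{M(\xi^l) - M(\xi^k)},
\end{align}
so $\norm{M(\xi^k) - M(\xi^l)} \ge \eps^*/L$ for all $k \ne l$ in $K$. Thus $\{M(\xi^k): k \in K\}$ is an $(\eps^*/L)$-separated subset of $M(\Xi_{R_0}(X))$ with respect to $\norm{\cdot}$, and by the definition of the packing number its cardinality --- which equals $|K|$ since the $M(\xi^k)$ are pairwise distinct --- is at most $\pac_{\eps^*/L,\norm{\cdot}}(M(\Xi_{R_0}(X)))$, which gives~\eqref{eq:finite-termination-simple-algo-upper-bound-on-termination-index}.

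The main obstacle I anticipate is the bookkeeping in the limiting argument for assertion (a): one must be careful that the inequality $\psi(M(\xi^l),x^k)\ge -\ol\delta_l$ really holds --- this needs $x^k$ to lie in the \emph{current} discretization $X^l$ (true, by the no-exchange/monotone structure $X^{k+1}\subset X^l$) and needs the equivalence theorem (Theorem~\ref{thm:equivalence-thm}) to be applicable on the finite design space $X^l$ with the $\ol\delta_l$-optimal design $\xi^l$, which requires checking that $\xi^l \in \Xifin(X^l)$ and that directional differentiability w.r.t.\ $M(\Xi(X^l))$ is inherited from the assumption w.r.t.\ $M(\Xi(X))$ --- and then threading the two subsequential limits (over iterates and over the added points $x^{k_{m_j}}$) together correctly using only the plain continuity of $\psi$. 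For assertion (b) the analogous separation estimate is cleaner because no limits are needed; the only subtlety is confirming that distinct iterates indeed have distinct information matrices (which follows because an already-added worst violator $x^k$ can never be a worst violator again once $\psi(M(\xi^l),x^k)\ge -\ol\delta^* > -\eps + \ul\delta^*$), so that $|K|$ genuinely equals the cardinality of the separated set.
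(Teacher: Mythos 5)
Your proposal is correct and follows essentially the same route as the paper's proof: for termination, a contradiction argument combining the violator inequality $\psi(M(\xi^k),x^k)<-\eps+\ul{\delta}_k$ with the reverse inequality $\psi(M(\xi^l),x^k)\ge-\ol{\delta}_l$ (from $x^k\in X^l$ and Theorem~\ref{thm:equivalence-thm} on $X^l$), passed to the limit along convergent subsequences of $(\xi^k)$ and $(x^k)$; and for the iteration bound, exactly the same $(\eps^*/L)$-separation argument showing $\{M(\xi^k):k\in K\}$ is a packing of $M(\Xi_{R_0}(X))$. The only cosmetic differences are in the subsequence bookkeeping (the paper compares $\psi(M(\xi^{k_{j+1}}),x^{k_j})$ along a single subsequence rather than taking iterated limits) and a harmless typo in your chain $\le -\eps+\ul{\delta}^* < -\eps+\ul{\delta}^*$.
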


\begin{proof}
As a first step, we show that $(\xi^k)$ terminates at an $\eps$-optimal design for $\Psi \circ M$. 
Assume that $(\xi^k)_{k\in K}$ does not terminate, that is, $K = \N_0$. It then follows by the algorithms' termination conditions 
that
\begin{align} \label{eq:termination-without-exchange-1}
\psi(M(\xi^k),x^k) < -\eps + \ul{\delta}_k \qquad (k \in \N_0).
\end{align}
(In fact, in the case where Algorithm~\ref{algo:no-exchange-arbitrary-violators} is used, this even holds with $\ul{\delta}_k = 0$.)
Since $\xi^l$ is a $\ol{\delta}_l$-optimal design on $X^l$ and since $X^l \supset X^{k+1} \ni x^k$ for all $l > k$ by the algorithms' definitions, it further follows that
\begin{align} \label{eq:termination-without-exchange-2}
\psi(M(\xi^l),x^k) \ge - \ol{\delta}_l \qquad (k,l \in \N_0 \text{ with } k < l)
\end{align}
by virtue of Theorem~\ref{thm:equivalence-thm} with $X$ replaced by $X^l$. We now choose convergent subsequences $(\xi^{k_j})$ and $(x^{k_j})$ with limits denoted by $\xi^*$ and $x^*$ (Lemma~\ref{lm:prohorov}). It then follows that $\xi^* \in \Xifin(X)$ (Lemma~\ref{lm:termination-lemma-without-exchange}) and from~\eqref{eq:termination-without-exchange-1} and~\eqref{eq:termination-without-exchange-2} it further follows by the continuity of $\psi$ 
that
\begin{gather}
\psi(M(\xi^*),x^*) = \lim_{j\to\infty} \psi(M(\xi^{k_j}),x^{k_j}) \le -\eps + \ul{\delta} \label{eq:termination-without-exchange-3}\\
\psi(M(\xi^*),x^*) = \lim_{j\to\infty} \psi(M(\xi^{k_{j+1}}),x^{k_j}) \ge -\limsup_{j\to\infty} \ol{\delta}_{k_{j+1}} \ge -\ol{\delta}. \label{eq:termination-without-exchange-4}
\end{gather}
Combining now the inequalities~\eqref{eq:termination-without-exchange-3} and~\eqref{eq:termination-without-exchange-4}, we conclude that $-\eps + \ul{\delta} \ge -\ol{\delta}$. Contradiction to~\eqref{eq:opt-tolerances-assumption-without-exchange}! So, our assumption that $(\xi^k)$ does not terminate is false. And therefore $(\xi^k)$ terminates at an $\eps$-optimal design for $\Psi \circ M$ by Lemma~\ref{lm:termination-lemma-without-exchange}~(i), as desired. 
\smallskip

As a second step, we show that under the additional Lipschitz continuity assumption on the sensitivity function and under the sharpened optimality-tolerance assumption $\eps > \ol{\delta}^* + \ul{\delta}^*$, the number of iterations until termination can be estimated above as in~\eqref{eq:finite-termination-simple-algo-upper-bound-on-termination-index}. 
So, let the sensitivity function $\psi$ be locally Lipschitz continuous w.r.t.~$M \in \dom \Psi$ uniformly w.r.t.~$x \in X$, that is, for every compact subset $\mathcal{M}$ of $\dom\Psi$ the restricted functions $\psi(\cdot,x)|_\mathcal{M}$ are Lipschitz continuous uniformly w.r.t.~$x\in X$. 
Since $M(\Xi_{R_0}(X))$ is a compact subset of $\dom\Psi$ by the lower semicontinuity of $\Psi$, it follows that 
there exists an $x$-independent Lipschitz constant $L \in (0,\infty)$ such that
\begin{align} \label{eq:termination-without-exchange-5}
\big| \psi(M(\eta),x)-\psi(M(\xi),x) \big| \le L \norm{M(\eta)-M(\xi)}
\qquad (\xi, \eta \in \Xi_{R_0}(X) \text{ and } x \in X). 
\end{align}
It further follows by the final boundedness statement of Lemma~\ref{lm:simple-algo-well-defined} that 
\begin{align} \label{eq:termination-without-exchange-6}
\xi^k \in \Xi_{R_0}(X) \qquad (k \in K).
\end{align}
Combining now~\eqref{eq:termination-without-exchange-1}, \eqref{eq:termination-without-exchange-2}, \eqref{eq:termination-without-exchange-5} and \eqref{eq:termination-without-exchange-6}, we see that
\begin{align} \label{eq:termination-without-exchange-7}
0 < \eps^* 
\le \eps - \ul{\delta}_k - \ol{\delta}_l 
&< \psi(M(\xi^l),x^k)-\psi(M(\xi^k),x^k) \notag\\
&\le L\norm{M(\xi^l)-M(\xi^k)}
\qquad (k,l \in K \text{ with } k < l).
\end{align}
In other words, \eqref{eq:termination-without-exchange-6} and \eqref{eq:termination-without-exchange-7} say that $\{M(\xi^k): k \in K\}$ is an $(\eps^*/L)$-packing of the set $M(\Xi_{R_0}(X))$ w.r.t.~$\norm{\cdot}$. And therefore, we have
\begin{align}
|K| = \big| \{M(\xi^k): k \in K\} \big| \le \operatorname{pac}_{\eps^*/L,\norm{\cdot}}(M(\Xi_{R_0}(X)))
\end{align}
by the definition of the packing number~\cite{Ti93}, as desired. 
\end{proof}

%
With essentially the same arguments, we can also prove that if Algorithm~\ref{algo:no-exchange} is applied with tolerance $\eps = 0$, then the sequence of iterates 
accumulates at an optimal design for $\Psi\circ M$. See Corollary~\ref{cor:convergence} below.

\subsection{Adaptive discretization algorithms with exchange} 

We now modify the above algorithms without exchange 
to algorithms with exchange in updating the discretizations. 
Instead of taking into account all points from the discretization $X^k$ when passing to the next discretization $X^{k+1}$, we now take into account only the support points of $\xi^k$. In particular, the discretization sets are no longer guaranteed to increase from iteration to iteration as they did for the algorithms without exchange by~\eqref{eq:discretization-sets-get-larger-and-larger}.

\begin{algo} \label{algo:exchange}
Input: a finite subset $X^0$ of $X$ and optimality tolerances $\eps, \ol{\delta}_k, \ul{\delta}_k \in [0,\infty)$. 
Initialize $k=0$ and then proceed in the following steps. 
\begin{itemize}
\item[1.] Compute a $\ol{\delta}_k$-approximate solution $\xi^k \in \Xi(X^k)$ of the discretized optimal design problem
\begin{align} \label{eq:DOE(X^k)-exchange}
\min_{\xi\in\Xi(X^k)} \Psi(M(\xi))
\end{align}
\item[2.] Compute a $\ul{\delta}_k$-approximate solution $x^k \in X$ of the strongest optimality violator problem
\begin{align} \label{eq:Viol(xi^k)-exchange}
\min_{x\in X} \psi(M(\xi^k),x).
\end{align}
If $\psi(M(\xi^k),x^k) < -\eps + \ul{\delta}_k$, then set $X^{k+1} := \supp \xi^k \cup \{x^k\}$ 
and return to Step~1 with $k$ replaced by $k+1$. If $\psi(M(\xi^k),x^k) \ge -\eps + \ul{\delta}_k$, then terminate. 
\end{itemize}
\end{algo}

\begin{algo} \label{algo:exchange-arbitrary-violators}
Input: a finite subset $X^0$ of $X$ and optimality tolerances $\eps, \ol{\delta}_k \in [0,\infty)$. 
Initialize $k=0$ and then proceed in the following steps. 
\begin{itemize}
\item[1.] Compute a $\ol{\delta}_k$-approximate solution $\xi^k \in \Xi(X^k)$ of the discretized optimal design problem
\begin{align} \label{eq:DOE(X^k)-exchange-arbitrary-violators}
\min_{\xi\in\Xi(X^k)} \Psi(M(\xi))
\end{align}
\item[2.] Search for a violator of the $\eps$-optimality condition for $\xi^k$, that is, for a point $x^k$ in the set
\begin{align} \label{eq:optimality-violator-set,exchange}
\{x \in X: \psi(M(\xi^k),x) < -\eps \}.
\end{align}
If such a point $x^k$ is found, then set $X^{k+1} := X^k \cup \{x^k\}$ 
and return to Step~1 with $k$ replaced by $k+1$. If no such point can possibly be found (that is, the set~\eqref{eq:optimality-violator-set,exchange} is empty), then terminate. 
\end{itemize}
\end{algo}

We first convince ourselves that for a suitable initial discretization $X^0$ 
all the optimization problems encountered in the course of Algorithm~\ref{algo:exchange} or~\ref{algo:exchange-arbitrary-violators} are solvable and that the sequence $(\Psi(M(\xi^k))$ of optimal values is essentially monotonically decreasing, provided that the sequence $(\ol{\delta}_k)$ of optimality tolerances for~\eqref{eq:DOE(X^k)-exchange} is summable. 

\begin{lm} \label{lm:exchange-algo-well-defined}
Suppose that Conditions~\ref{cond:shared-assumption-1-algos-with-and-without-exchange} and~\ref{cond:shared-assumption-2-algos-with-and-without-exchange} are satisfied and that $X^0$ is a finite subset of $X$ with~\eqref{eq:finite-criterion-on-Xi(X^0)} and $\eps, \ol{\delta}_k, \ul{\delta}_k \in [0,\infty)$ are arbitrary optimality tolerances.
Then all optimization problems in Algorithm~\ref{algo:exchange} or~\ref{algo:exchange-arbitrary-violators} 
are solvable and, for the (possibly finite) sequence $(\xi^k)_{k\in K}$ of iterates generated by Algorithm~\ref{algo:exchange} or~\ref{algo:exchange-arbitrary-violators} with these inputs, one has
\begin{align} \label{eq:exchange-algo-well-defined}
\Psi(M(\xi^{k+1})) \le \Psi\big( \alpha M(\xi^k) + (1-\alpha) M(\xi^{k+1}) \big) + \ol{\delta}_{k+1}
\qquad (k \in K-1 \text{ and } \alpha \in [0,1]).
\end{align} 
In particular, $\xi^k \in \Xifin(X)$ for all $k \in K$ and, if $(\ol{\delta}_k)$ is summable, then there is a null sequence $(\nu_k)$ in $\R$ such that $(\Psi(M(\xi^k)) + \nu_k)_{k\in K}$ is monotonically decreasing. 
\end{lm}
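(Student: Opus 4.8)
The plan is to mimic the proof of Lemma~\ref{lm:simple-algo-well-defined}, arguing by induction over the (possibly finite) index set $K$ with running hypothesis $\Xifin(X^k)\ne\emptyset$, the base case being the hypothesis~\eqref{eq:finite-criterion-on-Xi(X^0)}. Granting the hypothesis at stage $k$, the finite set $X^k$ together with $m|_{X^k}$ and the restriction of $M$ to $\Xi(X^k)$ satisfies Condition~\ref{cond:X-and-M}, so Theorem~\ref{thm:ex-of-optimal-designs} yields an optimal design for $\Psi\circ M$ over $\Xi(X^k)$ whose value is finite because $\Xifin(X^k)\ne\emptyset$; hence a $\ol{\delta}_k$-approximate solution $\xi^k$ exists (even when $\ol{\delta}_k=0$) and lies in $\Xifin(X^k)\subset\Xifin(X)$. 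Since then $M(\xi^k)\in\dom\Psi$, Condition~\ref{cond:shared-assumption-2-algos-with-and-without-exchange} makes $\psi(M(\xi^k),\cdot)$ continuous on the compact space $X$, so the strongest-violator problem~\eqref{eq:Viol(xi^k)-exchange} is attained and $x^k$ is well-defined (even when $\ul{\delta}_k=0$). To close the induction I would check $\Xifin(X^{k+1})\ne\emptyset$: for Algorithm~\ref{algo:exchange}, $X^{k+1}=\supp\xi^k\cup\{x^k\}$ and Lemma~\ref{lm:support} shows $\xi^k$ is concentrated on the finite set $\supp\xi^k$, hence $\xi^k\in\Xi(X^{k+1})$; for Algorithm~\ref{algo:exchange-arbitrary-violators}, $X^{k+1}\supset X^k$, so again $\xi^k\in\Xi(X^{k+1})$; in either case $\xi^k\in\Xifin(X^{k+1})$. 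This simultaneously gives the solvability claims and the assertion $\xi^k\in\Xifin(X)$ for all $k\in K$.

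For the estimate~\eqref{eq:exchange-algo-well-defined}, I would fix $k$ with $k+1\in K$ and $\alpha\in[0,1]$, note that $\xi^k$ and $\xi^{k+1}$ both lie in the convex set $\Xi(X^{k+1})$, hence so does $\alpha\xi^k+(1-\alpha)\xi^{k+1}$, and then invoke the $\ol{\delta}_{k+1}$-approximate-solution property of $\xi^{k+1}$ on $X^{k+1}$ together with the linearity of $M$ (which gives $M(\alpha\xi^k+(1-\alpha)\xi^{k+1})=\alpha M(\xi^k)+(1-\alpha)M(\xi^{k+1})$):
\begin{align*}
\Psi(M(\xi^{k+1}))
\le \inf_{\xi\in\Xi(X^{k+1})}\Psi(M(\xi))+\ol{\delta}_{k+1}
\le \Psi\big(\alpha M(\xi^k)+(1-\alpha)M(\xi^{k+1})\big)+\ol{\delta}_{k+1}.
\end{align*}
Specializing $\alpha=1$ gives $\Psi(M(\xi^{k+1}))\le\Psi(M(\xi^k))+\ol{\delta}_{k+1}$, the one-step estimate driving the last claim. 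Note no convexity of $\Psi$ is needed here.

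Finally, assuming $(\ol{\delta}_k)$ summable, I would put $\nu_k:=\sum_{j\in K,\,j>k}\ol{\delta}_j$; this is a well-defined non-negative real number for each $k\in K$, and for $K=\N_0$ it is a tail of a convergent series, hence a null sequence. From $\nu_k-\nu_{k+1}=\ol{\delta}_{k+1}$ and the one-step estimate,
\begin{align*}
\big(\Psi(M(\xi^{k+1}))+\nu_{k+1}\big)-\big(\Psi(M(\xi^k))+\nu_k\big)
=\Psi(M(\xi^{k+1}))-\Psi(M(\xi^k))-\ol{\delta}_{k+1}\le 0,
\end{align*}
so $(\Psi(M(\xi^k))+\nu_k)_{k\in K}$ is monotonically decreasing, as desired. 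I do not expect a serious obstacle; the two points needing attention are that Condition~\ref{cond:X-and-M} really does transfer to the finite discretizations $X^k$, and—specific to the exchange variant—that Lemma~\ref{lm:support} keeps the old iterate $\xi^k$ feasible after the support-based update, which is exactly what preserves non-emptiness of $\Xifin(X^{k+1})$ and thereby the well-definedness of the (possibly exact) subproblem solutions.
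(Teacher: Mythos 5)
Your proposal is correct and follows essentially the same route as the paper: an induction over $K$ establishing solvability of the subproblems and $\xi^k\in\Xifin(X)$, the key observation that $\supp(\alpha\xi^k+(1-\alpha)\xi^{k+1})\subset\supp\xi^k\cup\{x^k\}\subset X^{k+1}$ so that the mixture is feasible for the $(k+1)$-st discretized problem, and the tail sums $\nu_k$ of $(\ol{\delta}_l)$ for the monotonicity claim. You merely spell out the induction (via Theorem~\ref{thm:ex-of-optimal-designs} on the finite discretizations and the continuity of $\psi(M(\xi^k),\cdot)$) that the paper dismisses as straightforward, and your side remark that convexity of $\Psi$ is not needed for~\eqref{eq:exchange-algo-well-defined} is accurate.
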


\begin{proof}
It is straightforward to see by induction over $K$ that for every iteration index $k \in K$ the following statements are true: (i) the discretized design problem~\eqref{eq:DOE(X^k)-exchange} or~\eqref{eq:DOE(X^k)-exchange-arbitrary-violators}, respectively, has a solution (so that $\xi^k$ is well-defined also when $\ol{\delta}_k = 0$), (ii) if $k\ne 0$, then $\Psi(M(\xi^{k})) \le \Psi(\alpha M(\xi^{k-1}) + (1-\alpha)M(\xi^k)) + \ol{\delta}_{k}$ for all $\alpha \in [0,1]$, (iii) $\xi^k \in \Xifin(X)$, and (iv) the strongest violator problem~\eqref{eq:Viol(xi^k)-simple} has a solution. In order to see~(ii), notice that by the algorithms' updating rule one has
\begin{align}
\supp(\alpha \xi^{k-1} + (1-\alpha) \xi^k) \subset \supp \xi^{k-1} \cup \supp \xi^k \subset X^k
\qquad (\alpha \in [0,1])
\end{align}
and therefore $\alpha \xi^{k-1} + (1-\alpha) \xi^k \in \Xi(X^k)$ for all $\alpha \in [0,1]$. Combining this with the $\ol{\delta}_k$-approximate solution property of $\xi^k$, one immediately obtains (ii). 
It only remains to prove the monotonic decreasingness of $(\Psi(M(\xi^k)) + \nu_k)_{k\in K}$ for a suitable null sequence $(\nu_k)$, but this is straightforward as well. Indeed, define $\nu_k := \sum_{l=k}^\infty \ol{\delta}_{l+1}$. Since $(\ol{\delta}_k)$ is summable by assumption, $(\nu_k)$ is a null sequence in $\R$ and, by~\eqref{eq:exchange-algo-well-defined} with $\alpha = 1$, it also follows that $(\Psi(M(\xi^k)) + \nu_k)_{k\in K}$ is monotonically decreasing, as desired.
\end{proof}

In contrast to the termination lemma for Algorithm~\ref{algo:no-exchange} and~\ref{algo:no-exchange-arbitrary-violators}, we now have to additionally assume that the design criterion $\Psi|_{\dom\Psi}$ be strictly convex. In view of Proposition~\ref{prop:standard-properties-for-Psi_p}, this additional assumption is satisfied for all standard design criteria $\Psi_p$ with $p \in [0,\infty)$. 

\begin{lm} \label{lm:termination-lemma-with-exchange}
Suppose that Conditions~\ref{cond:shared-assumption-1-algos-with-and-without-exchange} and~\ref{cond:shared-assumption-2-algos-with-and-without-exchange} are satisfied and that $\Psi$ is even strictly convex. Suppose further that $X^0$ is a finite subset of $X$ satisfying~\eqref{eq:finite-criterion-on-Xi(X^0)} and that $(\xi^k)_{k\in K}$ is generated by Algorithm~\ref{algo:exchange} or~\ref{algo:exchange-arbitrary-violators} 
with optimality tolerances $\eps, \ol{\delta}_k, \ul{\delta}_k \in [0,\infty)$ such that $(\ol{\delta}_k)$ is summable.
\begin{itemize}
\item[(i)] If $(\xi^k)$ is terminating, then it terminates at an $\eps$-optimal design for $\Psi \circ M$.
\item[(ii)] If $(\xi^k)$ is non-terminating, then every accumuluation point $\xi^*$ of $(\xi^k)$ belongs to $\Xifin(X)$ and for every subsequence $(\xi^{k_l})$ with $\xi^{k_l} \longrightarrow \xi^*$ there exists yet another subsequence $(k_{l_j})$ such that
\begin{align} \label{eq:termination-lm-with-exchange}
M(\xi^{k_{l_j}+1}) \longrightarrow M(\xi^*) \qquad (j \to \infty).
\end{align}
\end{itemize}
\end{lm}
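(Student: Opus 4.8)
The plan is to treat parts~(i) and~(ii) separately: part~(i) is essentially a verbatim copy of part~(i) of Lemma~\ref{lm:termination-lemma-without-exchange}, and the real work goes into part~(ii), where the additional strict convexity hypothesis is used.

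For part~(i) I would argue as follows. If $(\xi^k)_{k\in K}$ terminates with terminal index $k^*$, then $\xi^{k^*}\in\Xifin(X)$ by Lemma~\ref{lm:exchange-algo-well-defined}, and the termination criterion of Algorithm~\ref{algo:exchange} (respectively Algorithm~\ref{algo:exchange-arbitrary-violators}) forces $\psi(M(\xi^{k^*}),x)\ge-\eps$ for every $x\in X$; for Algorithm~\ref{algo:exchange} this uses that $x^{k^*}$ is a $\ul{\delta}_{k^*}$-approximate minimizer of $\psi(M(\xi^{k^*}),\cdot)$, so that $\min_{x\in X}\psi(M(\xi^{k^*}),x)\ge\psi(M(\xi^{k^*}),x^{k^*})-\ul{\delta}_{k^*}\ge-\eps$. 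Theorem~\ref{thm:equivalence-thm} then yields that $\xi^{k^*}$ is an $\eps$-optimal design for $\Psi\circ M$.

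For part~(ii), suppose $K=\N_0$. Summability of $(\ol{\delta}_k)$ together with Lemma~\ref{lm:exchange-algo-well-defined} provides a null sequence $(\nu_k)$ such that $(\Psi(M(\xi^k))+\nu_k)_k$ is monotonically decreasing; it is bounded below by the finite optimal value of $\Psi\circ M$ (attained by Theorem~\ref{thm:ex-of-optimal-designs}), hence converges, and therefore $\Psi(M(\xi^k))\to\Psi^\infty$ for some $\Psi^\infty\in\R$. If $\xi^{k_l}\to\xi^*$ weakly, then $M(\xi^{k_l})\to M(\xi^*)$ by continuity of $M$, and lower semicontinuity of $\Psi$ gives $\Psi(M(\xi^*))\le\Psi^\infty<\infty$, so $\xi^*\in\Xifin(X)$ --- the first assertion of~(ii). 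To establish~\eqref{eq:termination-lm-with-exchange}, I would use weak compactness of $\Xi(X)$ (Lemma~\ref{lm:prohorov}) to pass to a further subsequence $(k_{l_j})$ along which also $\xi^{k_{l_j}+1}\to\xi^{**}$ weakly for some $\xi^{**}\in\Xi(X)$; lower semicontinuity of $\Psi$ again gives $\xi^{**}\in\Xifin(X)$. Now $\xi^{k_{l_j}}$, $\xi^{k_{l_j}+1}$, and their midpoints $\tfrac12\xi^{k_{l_j}}+\tfrac12\xi^{k_{l_j}+1}$ all lie in the convex set $\Xifin(X)=\dom(\Psi\circ M)$ and converge weakly to $\xi^*$, $\xi^{**}$, and $\tfrac12\xi^*+\tfrac12\xi^{**}$, respectively. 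Hence the \emph{continuity} of $(\Psi\circ M)|_{\Xifin(X)}$ from Lemma~\ref{lm:Psi-continuous}(ii) lets me take $j\to\infty$ in~\eqref{eq:exchange-algo-well-defined} with $\alpha=\tfrac12$ and $k=k_{l_j}$: the left-hand side tends to $\Psi(M(\xi^{**}))$, the $\ol{\delta}_{k_{l_j}+1}$ term vanishes, and the right-hand side tends to $\Psi\big(\tfrac12M(\xi^*)+\tfrac12M(\xi^{**})\big)$. Since $\Psi(M(\xi^{k_{l_j}}))$ and $\Psi(M(\xi^{k_{l_j}+1}))$ are subsequences of $(\Psi(M(\xi^k)))_k$ and also converge (by Lemma~\ref{lm:Psi-continuous}(ii)) to $\Psi(M(\xi^*))$ and $\Psi(M(\xi^{**}))$, we get $\Psi(M(\xi^*))=\Psi(M(\xi^{**}))=\Psi^\infty$, whence $\Psi^\infty\le\Psi\big(\tfrac12M(\xi^*)+\tfrac12M(\xi^{**})\big)$, while convexity of $\Psi$ gives the reverse inequality. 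So equality holds in the midpoint convexity inequality for $M(\xi^*)$ and $M(\xi^{**})$, and strict convexity of $\Psi$ forces $M(\xi^*)=M(\xi^{**})$; thus $M(\xi^{k_{l_j}+1})\to M(\xi^{**})=M(\xi^*)$, which is exactly~\eqref{eq:termination-lm-with-exchange}.

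The main obstacle --- and precisely the reason strict convexity is imposed --- is that the lower semicontinuity of $\Psi$ alone only yields $\Psi\big(\tfrac12M(\xi^*)+\tfrac12M(\xi^{**})\big)\le\Psi^\infty$, which is perfectly consistent with the strict inequality $\Psi\big(\tfrac12M(\xi^*)+\tfrac12M(\xi^{**})\big)<\Psi^\infty$ that strict convexity predicts when $M(\xi^*)\ne M(\xi^{**})$, so no contradiction results. One therefore genuinely needs the two-sided continuity of $\Psi\circ M$ on $\Xifin(X)$ from Lemma~\ref{lm:Psi-continuous}(ii), and this is what forces the detour of first extracting the sub-subsequence along which $\xi^{k+1}$ itself converges weakly. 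The remaining ingredients --- convexity of $\Xifin(X)$, stability of weak convergence under convex combinations, and the elementary rearrangement of~\eqref{eq:exchange-algo-well-defined} already used in part~(i) --- are routine bookkeeping.
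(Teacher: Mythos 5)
Your proposal is correct and follows essentially the same route as the paper's proof: part~(i) via the termination rule, the approximate-minimizer property of $x^{k^*}$, and Theorem~\ref{thm:equivalence-thm}; part~(ii) via the monotone decreasingness of $(\Psi(M(\xi^k))+\nu_k)$ from Lemma~\ref{lm:exchange-algo-well-defined}, extraction of a sub-subsequence along which $\xi^{k+1}$ converges, the continuity of $(\Psi\circ M)|_{\Xifin(X)}$ from Lemma~\ref{lm:Psi-continuous} applied to~\eqref{eq:exchange-algo-well-defined} with $\alpha=1/2$, and strict (mid-point) convexity to force $M(\xi^*)=M(\xi^{**})$. The only cosmetic difference is that you establish convergence of the full sequence $\Psi(M(\xi^k))$ to a limit $\Psi^\infty$, whereas the paper identifies the common subsequential limit with $\inf_k(\Psi(M(\xi^k))+\nu_k)$.
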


\begin{proof}
(i) Suppose that $(\xi^k)_{k\in K}$ is terminating, that is, $K = \{0,\dots, k^*\}$ for some terminal index $k^* \in \N_0$. It then follows by Lemma~\ref{lm:exchange-algo-well-defined} and by the termination rules of  Algorithm~\ref{algo:exchange} and~\ref{algo:exchange-arbitrary-violators}, respectively, that 
\begin{align} \label{eq:termination-lm-with-exchange,1}
\xi^{k^*} \in \Xifin(X) \qquad \text{and} \qquad \min_{x\in X} \psi(M(\xi^{k^*}),x) \ge -\eps.
\end{align}
And therefore, $\xi^{k^*}$ is an $\eps$-optimal design for $\Psi\circ M$ by virtue of Theorem~\ref{thm:equivalence-thm}, as desired.
\smallskip

(ii) Suppose now that $(\xi^k)_{k\in K}$ is non-terminating, that is, $K = \N_0$. Also, let $\xi^*$ be any accumulation point of $(\xi^k)$ and let $(\xi^{k_l})$ be any subsequence with $\xi^{k_l} \longrightarrow \xi^*$ as $l\to\infty$. Since $\Xi(X)$ is sequentially compact (Lemma~\ref{lm:prohorov}), there exists yet another subsequence $(k_{l_j})$ and a $\xi^{**} \in \Xi(X)$ such that $\xi^{k_{l_j}+1} \longrightarrow \xi^{**}$ as $j\to\infty$. Consequently, 
\begin{align} \label{eq:termination-lm-with-exchange,2}
M(\xi^{k_l}) \longrightarrow M(\xi^*) \qquad (l\to\infty)
\quad \text{and} \quad
M(\xi^{k_{l_j}+1}) \longrightarrow M(\xi^{**}) \qquad (j\to\infty).
\end{align}
Since $(\Psi(M(\xi^k)+\nu_k)$ is monotonically decreasing for a null sequence $(\nu_k)$ (Lemma~\ref{lm:exchange-algo-well-defined}), the subsequences are monotonically decreasing as well and
\begin{align} \label{eq:termination-lm-with-exchange,3}
\lim_{l\to\infty} \Psi(M(\xi^{k_l})) 
&= \lim_{l\to\infty} \big( \Psi(M(\xi^{k_l})) + \nu_{k_l}\big) 
= \inf_{k\in\N_0} (\Psi(M(\xi^k) + \nu_k) \notag\\
&= \lim_{j\to\infty} \big(\Psi(M(\xi^{k_{l_j}+1})) + \nu_{k_{l_j}+1}\big)
= \lim_{j\to\infty} \Psi(M(\xi^{k_{l_j}+1})).
\end{align}
Since, moreover, $\Psi$ is lower semicontinuous by assumption, \eqref{eq:termination-lm-with-exchange,2}, \eqref{eq:termination-lm-with-exchange,3} 
further imply
\begin{align}
\Psi(M(\xi^*)), \Psi(M(\xi^{**})) \le \inf_{k\in\N_0} (\Psi(M(\xi^k) + \nu_k) \le \Psi(M(\xi^0)) + \nu_0 < \infty
\end{align}
and therefore
\begin{align} \label{eq:termination-lm-with-exchange,4}
\xi^*, \xi^{**}, \xi^k \in \Xifin(X) \qquad (k \in \N_0).
\end{align}
In view of the continuity of $(\Psi \circ M)|_{\Xifin(X)}$ (Lemma~\ref{lm:Psi-continuous}), it follows from~\eqref{eq:termination-lm-with-exchange,2} and~\eqref{eq:termination-lm-with-exchange,4} and by \eqref{eq:exchange-algo-well-defined} with $\alpha := 1/2$ that, on the one hand,
\begin{align} \label{eq:termination-lm-with-exchange,5}
\Psi(M(\xi^{**})) &= \lim_{j\to\infty} \Psi(M(\xi^{k_{l_j}+1})) \le \lim_{j\to\infty} \big( \Psi\big( M( \xi^{k_{l_j}}/2 + \xi^{k_{l_j}+1}/2) \big) + \ol{\delta}_{k_{l_j}+1} \big) \notag\\
&= \Psi(M(\xi^*/2 + \xi^{**}/2)) = \Psi(M(\xi^*)/2 + M(\xi^{**})/2).
\end{align}
In view of the continuity of $(\Psi \circ M)|_{\Xifin(X)}$ (Lemma~\ref{lm:Psi-continuous}), it further follows from~\eqref{eq:termination-lm-with-exchange,2} and~\eqref{eq:termination-lm-with-exchange,4} and by~\eqref{eq:termination-lm-with-exchange,3} that, on the other hand,
\begin{align} \label{eq:termination-lm-with-exchange,6}
\Psi(M(\xi^*)) = \lim_{l\to\infty} \Psi(M(\xi^{k_l}))= \lim_{j\to\infty} \Psi(M(\xi^{k_{l_j}+1}))= \Psi(M(\xi^{**})).
\end{align}
Since $\Psi$ is strictly convex by assumption, we conclude from~\eqref{eq:termination-lm-with-exchange,5} and~\eqref{eq:termination-lm-with-exchange,6} that
\begin{align} \label{eq:termination-lm-with-exchange,7}
M(\xi^{**}) = M(\xi^*).
\end{align}
Combining now~(\ref{eq:termination-lm-with-exchange,2}.b) and~\eqref{eq:termination-lm-with-exchange,7}, we finally obtain the desired convergence~\eqref{eq:termination-lm-with-exchange} and thus the lemma is proved. 
\end{proof}

With these preparations at hand, we can now show that Algorithm~\ref{algo:exchange} or~\ref{algo:exchange-arbitrary-violators} applied with a tolerance $\eps > 0$ is guaranteed to terminate, namely at an $\eps$-optimal design for $\Psi \circ M$. In contrast to Algorithm~\ref{algo:no-exchange} and~\ref{algo:no-exchange-arbitrary-violators}, a  bound on the numbers of iterations until termination is not so easy to establish anymore. See Corollaries~\ref{cor:numbers-of-iterations-until-termination-under-sublinear-assumptions} and~\ref{cor:numbers-of-iterations-until-termination-under-linear-assumptions}, however, which establish such bounds at least for Algorithm~\ref{algo:exchange}. 

\begin{thm} \label{thm:finite-termination-exchange-algo}
Suppose that Conditions~\ref{cond:shared-assumption-1-algos-with-and-without-exchange} and~\ref{cond:shared-assumption-2-algos-with-and-without-exchange} are satisfied and that $\Psi$ is even strictly convex. Suppose further that $X^0$ is a finite subset of $X$ satisfying~\eqref{eq:finite-criterion-on-Xi(X^0)} and that $(\xi^k)_{k\in K}$ is generated by Algorithm~\ref{algo:exchange} or~\ref{algo:exchange-arbitrary-violators} 
with optimality tolerances $\eps > 0$ and $\ol{\delta}_k, \ul{\delta}_k \in [0,\infty)$ such that $(\ol{\delta}_k)$ is summable and
\begin{align} \label{eq:opt-tolerances-assumption-with-exchange}
\eps > \ul{\delta},
\end{align}
where $\ul{\delta} := \limsup_{k\to\infty} \ul{\delta}_k$. Then $(\xi^k)$ terminates at an $\eps$-optimal design for $\Psi \circ M$. 
\end{thm}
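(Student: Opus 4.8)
The plan is to argue by contradiction, following the structure of the proof of Theorem~\ref{thm:finite-termination-simple-algo} but replacing the use of the monotone inclusion $X^k \subset X^l$ (which is no longer available with exchange) by the exchange-specific accumulation statement of Lemma~\ref{lm:termination-lemma-with-exchange}~(ii). So, suppose $(\xi^k)$ does not terminate, i.e.\ $K = \N_0$. Then the algorithm's termination test fails at every iteration, which gives
\begin{align}
\psi(M(\xi^k),x^k) < -\eps + \ul{\delta}_k \qquad (k \in \N_0)
\end{align}
(with $\ul{\delta}_k = 0$ throughout in the case of Algorithm~\ref{algo:exchange-arbitrary-violators}). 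On the other hand, since $x^k \in X^{k+1} = \supp\xi^k \cup \{x^k\}$ and $\xi^{k+1}$ is a $\ol{\delta}_{k+1}$-approximate solution of the discretized problem on $X^{k+1}$, Theorem~\ref{thm:equivalence-thm} applied with $X$ replaced by $X^{k+1}$ yields
\begin{align}
\psi(M(\xi^{k+1}),x^k) \ge -\ol{\delta}_{k+1} \qquad (k \in \N_0).
\end{align}
In contrast to the no-exchange case, this lower bound is only available at the single index $k+1$, because the violator $x^k$ need not survive in the later discretizations $X^l$ with $l > k+1$.

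Next I would extract suitable nested subsequences. First pick $(\xi^{k_l})$ with $\xi^{k_l} \to \xi^*$ (Lemma~\ref{lm:prohorov}); passing to a further subsequence, also arrange $x^{k_l} \to x^*$ for some $x^* \in X$, using the sequential compactness of the compact metric space $X$. By Lemma~\ref{lm:termination-lemma-with-exchange}~(ii) we then have $\xi^* \in \Xifin(X)$, i.e.\ $M(\xi^*) \in \dom\Psi$, and there is a further subsequence $(k_{l_j})$ with $M(\xi^{k_{l_j}+1}) \to M(\xi^*)$; moreover $M(\xi^{k_{l_j}}) \to M(\xi^*)$ as well (weak convergence of $\xi^{k_l}$ and continuity of $m$). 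Now I invoke the continuity of $\psi$ on $\dom\Psi \times X$ (Condition~\ref{cond:shared-assumption-2-algos-with-and-without-exchange}): letting $j \to \infty$ in the two displayed inequalities along the sub-subsequence $(k_{l_j})$, and using that $\ol{\delta}_k \to 0$ (summability of $(\ol{\delta}_k)$) while $\limsup_j \ul{\delta}_{k_{l_j}} \le \ul{\delta}$, I obtain
\begin{align}
0 \le \psi(M(\xi^*),x^*) \le -\eps + \ul{\delta},
\end{align}
hence $\eps \le \ul{\delta}$, contradicting~\eqref{eq:opt-tolerances-assumption-with-exchange}. Therefore $(\xi^k)$ must terminate, and by Lemma~\ref{lm:termination-lemma-with-exchange}~(i) it terminates at an $\eps$-optimal design for $\Psi \circ M$, as claimed.

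The only real subtlety — and the reason strict convexity of $\Psi$ is needed here whereas it was not in Theorem~\ref{thm:finite-termination-simple-algo} — is forcing the ``shifted'' iterates $\xi^{k_l+1}$ onto the same accumulation behaviour as $\xi^*$ (precisely: making their information matrices converge to $M(\xi^*)$). This is exactly what Lemma~\ref{lm:termination-lemma-with-exchange}~(ii) supplies, so the main work has been front-loaded into that lemma, and the present argument reduces to a short squeezing contradiction. The remaining care is purely bookkeeping: taking the three subsequences ($\xi^{k_l}\to\xi^*$, $x^{k_l}\to x^*$, and $M(\xi^{k_l+1})\to M(\xi^*)$) in a nested fashion so that all limits may be passed simultaneously.
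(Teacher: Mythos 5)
Your proposal is correct and follows essentially the same route as the paper's proof: the same contradiction setup with the two inequalities $\psi(M(\xi^k),x^k) < -\eps + \ul{\delta}_k$ and $\psi(M(\xi^{k+1}),x^k) \ge -\ol{\delta}_{k+1}$, the same nested subsequence extraction, and the same appeal to Lemma~\ref{lm:termination-lemma-with-exchange}~(ii) to transfer the accumulation behaviour to the shifted iterates before squeezing $0 \le \psi(M(\xi^*),x^*) \le -\eps + \ul{\delta}$. Your closing remark about why strict convexity enters only through that lemma also matches the paper's design of the argument.
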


\begin{proof}
Assume that $(\xi^k)_{k\in K}$ does not terminate, that is, $K = \N_0$. It then follows by the algorithms' termination conditions 
that
\begin{align} \label{eq:termination-with-exchange-1}
\psi(M(\xi^k),x^k) < -\eps + \ul{\delta}_k \qquad (k \in \N_0).
\end{align}
(In fact, in the case where Algorithm~\ref{algo:exchange-arbitrary-violators} is used, this even holds with $\ul{\delta}_k = 0$.)
Since $\xi^{k+1}$ is a $\ol{\delta}_{k+1}$-optimal design on $X^{k+1}$ and since $X^{k+1} = \supp \xi^k \cup \{x^k\} \ni x^k$ by the algorithms' definitions, it further follows that
\begin{align} \label{eq:termination-with-exchange-2}
\psi(M(\xi^{k+1}),x^k) \ge - \ol{\delta}_{k+1} \qquad (k \in \N_0)
\end{align}
by virtue of Theorem~\ref{thm:equivalence-thm} with $X$ replaced by $X^{k+1}$. We now choose convergent subsequences $(\xi^{k_l})$ and $(x^{k_l})$ with limits denoted by $\xi^*$ and $x^*$ (Lemma~\ref{lm:prohorov}) and another subsequence $(k_{l_j})$ such that~\eqref{eq:termination-lm-with-exchange} is satisfied (Lemma~\ref{lm:termination-lemma-with-exchange}). It then follows that $\xi^* \in \Xifin(X)$ (Lemma~\ref{lm:termination-lemma-with-exchange}) and from~\eqref{eq:termination-with-exchange-1} and~\eqref{eq:termination-with-exchange-2} it further follows by the continuity of $\psi$ 
that
\begin{gather}
\psi(M(\xi^*),x^*) = \lim_{l\to\infty} \psi(M(\xi^{k_l}),x^{k_l}) \le -\eps + \ul{\delta} \label{eq:termination-with-exchange-3}\\
\psi(M(\xi^*),x^*) = \lim_{j\to\infty} \psi(M(\xi^{k_{l_j}+1}),x^{k_{l_j}}) \ge -\lim_{j\to\infty} \ol{\delta}_{k_{l_j}+1} = 0. \label{eq:termination-with-exchange-4}
\end{gather}
In the last equality of~\eqref{eq:termination-with-exchange-4}, we used the assumed summability of $(\ol{\delta}_k)$. 
Combining now the inequalities~\eqref{eq:termination-with-exchange-3} and~\eqref{eq:termination-with-exchange-4}, we conclude that $-\eps + \ul{\delta} \ge 0$. Contradiction to~\eqref{eq:opt-tolerances-assumption-with-exchange}! So, our assumption that $(\xi^k)$ does not terminate is false. And therefore $(\xi^k)$ terminates at an $\eps$-optimal design for $\Psi \circ M$ by Lemma~\ref{lm:termination-lemma-with-exchange}~(i), as desired. 
\end{proof}

With essentially the same arguments, we can also prove that if Algorithm~\ref{algo:exchange} is applied with tolerance $\eps = 0$, then the sequence of iterates 
accumulates at an optimal design for $\Psi\circ M$. See Corollary~\ref{cor:convergence} below.

\subsection{A basic convergence result}

We move on to establish a basic convergence result on our strict algorithms (Algorithms~\ref{algo:no-exchange} and~\ref{algo:exchange}). It says that when applied with optimality tolerance $\eps = 0$, the strict algorithms might not terminate anymore but still converge to an optimal design as $k \to \infty$.

\begin{cor} \label{cor:convergence}
Suppose that Conditions~\ref{cond:shared-assumption-1-algos-with-and-without-exchange} and~\ref{cond:shared-assumption-2-algos-with-and-without-exchange} are satisfied. Suppose further that $X^0$ is a finite subset of $X$ satisfying~\eqref{eq:finite-criterion-on-Xi(X^0)} and that $(\xi^k)_{k\in K}$ is generated by Algorithm~\ref{algo:no-exchange} or Algorithm~\ref{algo:exchange} 
with optimality tolerances $\eps = 0$ and $\ol{\delta}_k, \ul{\delta}_k \in [0,\infty)$ such that $\lim_{k\to\infty} \ol{\delta}_k = 0 = \lim_{k\to\infty} \ul{\delta}_k$. In case Algorithm~\ref{algo:exchange} is used, additionally assume that $\Psi$ is even strictly convex and that $(\ol{\delta}_k)$ is even summable.
\begin{itemize}
\item[(i)] If $(\xi^k)$ is terminating, then it terminates at an optimal design for $\Psi\circ M$.
\item[(ii)] If $(\xi^k)$ is non-terminating, then each of its accumulation points is an optimal design for $\Psi\circ M$. 
\end{itemize} 
Additionally, for a strictly convex design criterion $\Psi$, the sequence of information matrices $(M(\xi^k))$ converges to the unique solution of
\begin{align} \label{eq:doe-in-M}
\min_{M \in M(\Xi(X))} \Psi(M).
\end{align}
\end{cor}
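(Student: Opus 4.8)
The plan is to reduce everything to the machinery already built for the finite-termination theorems. Assertion~(i) is immediate: if $(\xi^k)_{k\in K}$ terminates, then Lemma~\ref{lm:termination-lemma-without-exchange}~(i) (for Algorithm~\ref{algo:no-exchange}, whose boundedness hypothesis on $(\ol{\delta}_k)$ holds since $\ol{\delta}_k\to 0$) or Lemma~\ref{lm:termination-lemma-with-exchange}~(i) (for Algorithm~\ref{algo:exchange}, whose hypotheses hold because $(\ol{\delta}_k)$ is summable and $\Psi$ is strictly convex by assumption) shows it terminates at an $\eps$-optimal design with $\eps=0$, i.e.\ at an optimal design for $\Psi\circ M$. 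For assertion~(ii), let $K=\N_0$ and let $\xi^{k_l}\longrightarrow\xi^*$ be a convergent subsequence. By Lemma~\ref{lm:termination-lemma-without-exchange}~(ii), resp.\ Lemma~\ref{lm:termination-lemma-with-exchange}~(ii), we already have $\xi^*\in\Xifin(X)$, so by the equivalence theorem (Theorem~\ref{thm:equivalence-thm} with $\eps=0$) it remains only to show $\psi(M(\xi^*),x)\ge 0$ for every $x\in X$.

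To prove this, I would argue exactly as in the proofs of Theorems~\ref{thm:finite-termination-simple-algo} and~\ref{thm:finite-termination-exchange-algo}, but extracting optimality of the limit rather than a contradiction. Pass to a further subsequence (still written $k_l$) along which also $x^{k_l}\longrightarrow x^*$ for some $x^*\in X$ (compactness of $X$), and, in the case of Algorithm~\ref{algo:exchange}, to yet another subsequence $(k_{l_j})$ with $M(\xi^{k_{l_j}+1})\longrightarrow M(\xi^*)$, which exists by Lemma~\ref{lm:termination-lemma-with-exchange}~(ii). Since $x^{k_l}\in X^{k_l+1}\subset X^{k_m}$ for $m>l$ in the no-exchange case, resp.\ $x^k\in X^{k+1}$ in the exchange case, and since $\xi^{k_m}$ resp.\ $\xi^{k+1}$ is a $\ol{\delta}_{k_m}$- resp.\ $\ol{\delta}_{k+1}$-optimal design on the corresponding discretization, Theorem~\ref{thm:equivalence-thm} applied with $X$ replaced by that discretization gives $\psi(M(\xi^{k_m}),x^{k_l})\ge-\ol{\delta}_{k_m}$, resp.\ $\psi(M(\xi^{k_{l_j}+1}),x^{k_{l_j}})\ge-\ol{\delta}_{k_{l_j}+1}$. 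Letting $m\to\infty$ and then $l\to\infty$, resp.\ letting $j\to\infty$, and using that weak convergence of designs yields convergence of the information matrices together with the continuity of $\psi$ and $\ol{\delta}_k\to 0$, this yields $\psi(M(\xi^*),x^*)\ge 0$. On the other hand, since $x^k$ is a $\ul{\delta}_k$-approximate solution of $\min_{x\in X}\psi(M(\xi^k),x)$, we have $\psi(M(\xi^k),x)\ge\psi(M(\xi^k),x^k)-\ul{\delta}_k$ for every $x\in X$; passing to the limit (again by continuity of $\psi$ and $\ul{\delta}_k\to 0$) gives $\psi(M(\xi^*),x)\ge\psi(M(\xi^*),x^*)\ge 0$ for all $x\in X$. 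Hence $\xi^*$ is an optimal design, which proves~(ii).

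For the additional statement, assume $\Psi$ is strictly convex. The set $M(\Xi(X))$ is compact and convex (as shown in the proof of Lemma~\ref{lm:designs-with-finite-supp}), $\Psi$ is lower semicontinuous, and $\Psi|_{\dom\Psi}$ is strictly convex, so $\Psi$ attains its minimum over $M(\Xi(X))$ at exactly one point $M^*$: existence by lower semicontinuity and compactness (equivalently via Theorem~\ref{thm:ex-of-optimal-designs}), and uniqueness because two distinct minimizers would both lie in $\dom\Psi$ and their midpoint, again in $M(\Xi(X))$, would have strictly smaller value. Since $M$ maps $\Xi(X)$ onto $M(\Xi(X))$, a design is optimal for $\Psi\circ M$ if and only if its information matrix equals $M^*$. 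Now $M(\xi^k)\to M^*$ follows by a standard subsequence argument: $(M(\xi^k))$ lies in the bounded set $M(\Xi(X))$, and any of its subsequences has, by sequential compactness of $\Xi(X)$ (Lemma~\ref{lm:prohorov}), a further subsequence $(\xi^{k_{l_j}})$ converging weakly to some $\xi^*$, which by~(ii) is optimal; since $m$ is bounded and continuous this forces $M(\xi^{k_{l_j}})\to M(\xi^*)=M^*$. Thus every subsequence of $(M(\xi^k))$ has a further subsequence converging to $M^*$, which in a bounded set is equivalent to $M(\xi^k)\to M^*$. In the terminating case the claim is trivial, since by~(i) the terminal iterate is optimal and hence has information matrix $M^*$.

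The step I expect to be the main obstacle is the exchange case of assertion~(ii): there the ``old violator'' inequality controls only the immediate successor $\xi^{k+1}$ of $\xi^k$, not all later iterates, so Lemma~\ref{lm:termination-lemma-with-exchange}~(ii) has to be threaded carefully through the argument to bring $M(\xi^{k_{l_j}+1})$ back to $M(\xi^*)$ in the limit, and the tower of subsequence extractions (for $\xi^{k_l}$, for $x^{k_l}$, and for $k_{l_j}$) must be kept consistent. Once this bookkeeping is handled, the rest is a routine repackaging of the finite-termination proofs and the existence/uniqueness facts from Section~\ref{sec:existence}.
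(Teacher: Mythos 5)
Your proposal is correct and follows essentially the same route as the paper's own proof: assertion (i) via the termination lemmas, assertion (ii) by re-running the subsequence and violator-inequality arguments from the finite-termination theorems to obtain $\psi(M(\xi^*),x^*)\ge 0$ and then $\psi(M(\xi^*),x)\ge 0$ for all $x$, and the final claim by uniqueness of the minimizer plus the standard accumulation-point argument. The only (harmless) difference is that you re-derive the estimates that the paper simply cites from the proofs of its termination theorems, and you explicitly dispose of the terminating case of the last assertion, which the paper leaves implicit.
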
 

\begin{proof}
Assertion~(i) is clear by Lemma~\ref{lm:termination-lemma-without-exchange}~(i) or Lemma~\ref{lm:termination-lemma-with-exchange}~(i), respectively, and we have only to prove assertion~(ii) and the additional convergence assertion concerning the information matrices. Suppose therefore that $(\xi^k)$ is non-terminating for the entire proof. 
\smallskip

As a first step, we show that every accumulation point $\xi^*$ of $(\xi^k)$ is an optimal design for $\Psi\circ M$. So, let $\xi^*$ be an accumulation point of $(\xi^k)$. 
With the same arguments as in the proof of the termination theorem for Algorithm~\ref{algo:no-exchange} or Algorithm~\ref{algo:exchange}, respectively, it then follows that $\xi^* \in \Xifin(X)$ and that there are subsequences $(\xi^{k_l})$ and $(x^{k_l})$ and an $x^* \in X$ such that
\begin{align} \label{eq:convergence,1}
\xi^{k_l} \longrightarrow \xi^* 
\qquad \text{and} \qquad
x^{k_l} \longrightarrow x^*
\qquad (l\to\infty)
\end{align}
and such that the estimate~\eqref{eq:termination-without-exchange-4} or~\eqref{eq:termination-with-exchange-4} is satisfied, respectively (depending on whether Algorithm~\ref{algo:no-exchange} or~\ref{algo:exchange} is used). Since $x^k$ is a $\ul{\delta}_k$-approximate solution of~\eqref{eq:Viol(xi^k)-simple} or~\eqref{eq:Viol(xi^k)-exchange}, respectively, it further follows that
\begin{align} \label{eq:convergence,2}
\psi(M(\xi^k),x) \ge \psi(M(\xi^k),x^k) - \ul{\delta}_k 
\qquad (k \in \N_0 \text{ and } x \in X). 
\end{align}
Combining now~\eqref{eq:termination-without-exchange-4} or~\eqref{eq:termination-with-exchange-4} with  \eqref{eq:convergence,1} and~\eqref{eq:convergence,2}, we conclude by the continuity of $\psi$ and by the assumed convergences of $(\ol{\delta}_k)$ and $(\ul{\delta}_k)$ to $0$ that
\begin{align}
\psi(M(\xi^*),x) 
= \lim_{l\to\infty} \psi(M(\xi^{k_l}),x) \ge \lim_{l\to\infty} \big( \psi(M(\xi^{k_l}),x^{k_l}) - \ul{\delta}_{k_l} \big) = \psi(M(\xi^*),x^*) 
\ge 0
\end{align}
for all $x \in X$. 
And therefore, $\xi^*$ is an optimal design for $\Psi \circ M$ by virtue of Theorem~\ref{thm:equivalence-thm}, as desired.
\smallskip

As a second step, we show that for strictly convex $\Psi$ the sequence of information matrices $(M(\xi^k))$ converges to the unique solution $M^*$ of~\eqref{eq:doe-in-M}. So, let $\Psi$ be strictly convex. In particular, this implies that \eqref{eq:doe-in-M} indeed has a unique solution $M^*$. 
It then follows by the first step and Lemma~\ref{lm:prohorov} that every accumulation point of $(M(\xi^k))$ is a solution of~\eqref{eq:doe-in-M} 
and thus is equal to $M^*$. Consequently, $(M(\xi^k))$ has only one accumulation point, namely $M^*$. And therefore $(M(\xi^k))$ converges to $M^*$, as desired.
\end{proof}

\subsection{A general sublinear convergence rate result}

In this section, we improve the mere convergence result from the previous section to a convergence rate result with a sublinear convergence rate. 
In order to get this sublinear convergence rate, we have to strengthen the assumptions on the design criterion $\Psi$ a bit, namely essentially add an $L$-smoothness assumption on $\Psi$. As we will see, the thus strenghtened assumptions cover a large variety of situations (including those considered in~\cite{YaBiTa13}) and therefore pose almost no restriction from practical point of view. 

\begin{thm} \label{thm:sublin-convergence-rate}
Suppose that Conditions~\ref{cond:shared-assumption-1-algos-with-and-without-exchange} and~\ref{cond:shared-assumption-3-algos-with-and-without-exchange} are satisfied. Suppose further that $X^0$ is a finite subset of $X$ satisfying~\eqref{eq:finite-criterion-on-Xi(X^0)} and that $(\xi^k)_{k\in K}$ is generated by Algorithm~\ref{algo:no-exchange} or Algorithm~\ref{algo:exchange} with optimality tolerances $\eps, \ol{\delta}_k, \ul{\delta}_k \in [0,\infty)$ such that
\begin{align} \label{eq:opt-tolerances-of-subproblems}
\ol{\delta}_k \le \ol{c}/(k+1)^2 
\qquad \text{and} \qquad
\ul{\delta}_k \le \ul{c}/(k+2) 
\qquad (k \in \N_0)
\end{align}
for some constants $\ol{c}, \ul{c} \in [0,\infty)$. Suppose finally that 
\begin{align} \label{eq:M-subset-of-dom-Psi}
\mathcal{M} := \conv\big( \big\{ (1-\alpha)M(\xi^k) + \alpha m(x^k): k \in K \text{ and } \alpha \in [0,2/3] \big\} \big)
\subset \dom \Psi
\end{align}
and that $\Psi|_{\mathcal{M}}$ is $L$-smooth w.r.t.~$|\cdot|$ for some $L \in [0,\infty)$. 
We then have the following sublinear convergence rate estimate
\begin{align} \label{eq:sublin-convergence-rate-estimate}
\Psi(M(\xi^k)) - \Psi^* \le \frac{2}{k+2} \Big( (\Psi(M(\xi^1)) - \Psi^*) + L \big(\diam(M(\Xi(X))\big)^2 + \ol{c} + \ul{c} \Big)
\end{align}
for all $k \in K$ with $k \ge 2$, where $\Psi^* := \min_{\xi \in \Xi(X)} \Psi(M(\xi))$ and $\diam(M(\Xi(X)) := \sup\{|M-N|: M,N \in M(\Xi(X))\}$.
\end{thm}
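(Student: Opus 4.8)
The plan is to run a Frank--Wolfe-type (conditional gradient) analysis, exploiting that the re-optimization of the discretized problem over $X^{k+1}$ dominates a single Frank--Wolfe step from $M(\xi^k)$ towards the vertex $m(x^k)$. The backbone of the argument is the one-step progress estimate: for every $k \in K$ with $k+1 \in K$ and every admissible step size $\alpha \in [0,2/3]$,
\begin{align} \label{eq:plan-progress}
\Psi(M(\xi^{k+1})) - \Psi^*
&\le (1-\alpha)\big(\Psi(M(\xi^k)) - \Psi^*\big) + \alpha\,\ul{\delta}_k + \frac{L\alpha^2}{2}\,\big(\diam M(\Xi(X))\big)^2 + \ol{\delta}_{k+1}.
\end{align}

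To derive~\eqref{eq:plan-progress}, I would first note that $(1-\alpha)\xi^k + \alpha\delta_{x^k} \in \Xi(X^{k+1})$: for Algorithm~\ref{algo:exchange} this holds because $X^{k+1} = \supp\xi^k \cup \{x^k\}$, and for Algorithm~\ref{algo:no-exchange} because $X^{k+1} = X^k \cup \{x^k\} \supseteq \supp\xi^k \cup \{x^k\}$. Hence the $\ol{\delta}_{k+1}$-approximate-solution property of $\xi^{k+1}$ yields $\Psi(M(\xi^{k+1})) \le \Psi\big((1-\alpha)M(\xi^k) + \alpha m(x^k)\big) + \ol{\delta}_{k+1}$. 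Since $M(\xi^k)$ (the case $\alpha = 0$) and $(1-\alpha)M(\xi^k) + \alpha m(x^k)$ both lie in $\mathcal{M}$ for $\alpha \in [0,2/3]$, the $L$-smoothness of $\Psi|_{\mathcal{M}}$ bounds the right-hand side by $\Psi(M(\xi^k)) + \alpha D\Psi(M(\xi^k))(m(x^k)-M(\xi^k)) + \tfrac{L\alpha^2}{2}|m(x^k)-M(\xi^k)|^2 + \ol{\delta}_{k+1}$. By Condition~\ref{cond:shared-assumption-3-algos-with-and-without-exchange} and Lemma~\ref{lm:sensi-fct-in-terms-of-derivative} the linear term equals $\alpha\,\psi(M(\xi^k),x^k)$; using that $x^k$ is a $\ul{\delta}_k$-approximate solution of the worst-violator problem together with $\min_{x\in X}\psi(M(\xi^k),x) \le \int_X \psi(M(\xi^k),x)\d\xi^*(x) = D\Psi(M(\xi^k))(M(\xi^*)-M(\xi^k))$ for an optimal design $\xi^*$ (which exists by Theorem~\ref{thm:ex-of-optimal-designs}) and the convexity of $\Psi$ (exactly as in~\eqref{eq:equivalence-thm,4}), one gets $\psi(M(\xi^k),x^k) \le \Psi^* - \Psi(M(\xi^k)) + \ul{\delta}_k$. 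Finally $|m(x^k)-M(\xi^k)| \le \diam M(\Xi(X))$ since both $m(x^k) = M(\delta_{x^k})$ and $M(\xi^k)$ belong to $M(\Xi(X))$. Assembling these bounds gives~\eqref{eq:plan-progress}.

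It then remains to close the recursion~\eqref{eq:plan-progress} by induction on $k$, choosing the classical step size $\alpha_k := 2/(k+2)$, which satisfies $\alpha_k \le 2/3$ precisely for $k \ge 1$. Writing $D := \diam M(\Xi(X))$ and $A := (\Psi(M(\xi^1))-\Psi^*) + LD^2 + \ol{c} + \ul{c}$ and inserting $\ol{\delta}_{k+1} \le \ol{c}/(k+2)^2$, $\ul{\delta}_k \le \ul{c}/(k+2)$, evaluating~\eqref{eq:plan-progress} at $k=1$ gives the base case $\Psi(M(\xi^2))-\Psi^* \le A/2 = 2A/(2+2)$, while the inductive step propagates $\Psi(M(\xi^k))-\Psi^* \le 2A/(k+2)$ to $\Psi(M(\xi^{k+1}))-\Psi^* \le 2A/(k+3)$ by means of the elementary inequalities $(k+1)(k+3) \le (k+2)^2$ and $2\ul{c}+2LD^2+\ol{c} \le 2A$. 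This is exactly~\eqref{eq:sublin-convergence-rate-estimate}.

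The main obstacle is essentially bookkeeping rather than a genuine difficulty: the hypothesis confines the admissible step sizes to $[0,2/3]$ (because $\Psi$ is only assumed $L$-smooth on $\mathcal{M}$, and $\mathcal{M}$ only contains the segments $[M(\xi^k),m(x^k)]$ up to fraction $2/3$), which excludes the full first step $\alpha_0 = 1$ and forces the rate to be anchored at $\xi^1$ and to hold only from $k=2$ on. Two further points require care: that the re-optimized iterate $\xi^{k+1}$ genuinely dominates the Frank--Wolfe surrogate $(1-\alpha)\xi^k + \alpha\delta_{x^k}$ --- this is where the two update rules $X^{k+1} = \supp\xi^k \cup \{x^k\}$ and $X^{k+1} = X^k \cup \{x^k\}$ must both be handled (and both contain $\supp\xi^k \cup \{x^k\}$, so the argument is uniform) --- and that $x^k$ is an \emph{approximately worst} violator, which holds for the strict Algorithms~\ref{algo:no-exchange} and~\ref{algo:exchange} but not for their relaxed counterparts, and is what lets one compare $\psi(M(\xi^k),x^k)$ with the optimality gap.
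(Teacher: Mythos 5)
Your proposal is correct and follows essentially the same route as the paper's proof: the same Frank--Wolfe-type one-step progress estimate obtained from the domination of the surrogate $(1-\alpha)\xi^k+\alpha\delta_{x^k}$ by the re-optimized iterate, the same bound $\psi(M(\xi^k),x^k)\le -(\Psi(M(\xi^k))-\Psi^*)+\ul{\delta}_k$ via the approximate-worst-violator property and convexity, and the same induction with step size $\alpha_k=2/(k+2)$ anchored at $k_0=1$ because of the $[0,2/3]$ restriction. The only difference is cosmetic: the paper separates the progress estimate into two displayed steps and proves the recursion for an arbitrary anchor $k_0\ge 1$ before specializing to $k_0=1$.
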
 

\begin{proof}
We proceed in three steps and in the entire proof we will use the following shorthand notation: 
\begin{align} \label{eq:sublin-convergence-rate-C-def}
C := \frac{L}{2} \big(\diam(M(\Xi(X))\big)^2.
\end{align}

As a first step, we show that 
\begin{align} \label{eq:sublin-convergence-rate-step1}
\Psi(M(\xi^{k+1})) \le \Psi(M(\xi^k)) + \alpha \psi(M(\xi^k),x^k) + C \alpha^2 + \ol{\delta}_{k+1}
\end{align}
for all $k \in K$ with $k+1 \in K$ and for all $\alpha \in [0,2/3]$. So, let $k, k+1 \in K$ and let $\alpha \in [0,2/3]$. It then follows by the updating rule both of Algorithm~\ref{algo:no-exchange} and of Algorithm~\ref{algo:exchange} that
\begin{align}
\supp\big( (1-\alpha) \xi^k + \alpha \delta_{x^k} \big) \subset \supp \xi^k \cup \{x^k\} \subset X^{k+1}
\end{align}
and therefore $(1-\alpha) \xi^k + \alpha \delta_{x^k} \in \Xi(X^{k+1})$. 
Since $\xi^{k+1}$ is a $\ol{\delta}_{k+1}$-optimal design on $X^{k+1}$ by the algorithms' definitions, it follows that
\begin{align}
\Psi(M(\xi^{k+1})) \le \min_{\xi \in \Xi(X^{k+1})} \Psi(M(\xi)) + \ol{\delta}_{k+1} 
\le \Psi\big( (1-\alpha)M(\xi^k) + \alpha m(x^k) \big) + \ol{\delta}_{k+1}.
\end{align}
Since, moreover, $(1-\alpha)M(\xi^k) + \alpha m(x^k) \in \mathcal{M}$ by the choice of $\alpha$, it further follows by the assumed $L$-smoothness of $\Psi|_{\mathcal{M}}$ 
and by Lemma~\ref{lm:sensi-fct-in-terms-of-derivative} 
that
\begin{align}
\Psi(M(\xi^{k+1})) 
&\le \Psi(M(\xi^k)) + \alpha D\Psi(M(\xi^k))(m(x^k)-M(\xi^k)) + \frac{L}{2} \big| m(x^k)-M(\xi^k) \big|^2 \alpha^2 + \ol{\delta}_{k+1} \notag\\
&\le \Psi(M(\xi^k)) + \alpha \psi(M(\xi^k),x^k) + C \alpha^2 + \ol{\delta}_{k+1},
\end{align}
which is precisely the desired estimate~\eqref{eq:sublin-convergence-rate-step1}.
\smallskip

As a second step, we show that 
\begin{align} \label{eq:sublin-convergence-rate-step2}
\Psi(M(\xi^{k+1})) - \Psi^* \le (1-\alpha) \big( \Psi(M(\xi^k)-\Psi^* \big) + C\alpha^2 + \ol{\delta}_{k+1} + \ul{\delta}_k \alpha
\end{align}
for all  all $k \in K$ with $k+1 \in K$ and for all $\alpha \in [0,2/3]$. So, let $k, k+1 \in K$ and let $\alpha \in [0,2/3]$. Also, let $\xi^* \in \Xi(X)$ be an optimal design for $\Psi \circ M$ on $X$, that is, $\Psi(M(\xi^*)) = \Psi^*$. 
Since $x^k$ is a $\ul{\delta}_k$-approximate solution of the worst optimality violator problem for $\xi^k$ by the algorithms' definitions and since $\Psi$ is convex by assumption, it follows that
\begin{align} \label{eq:sublin-convergence-rate-step2,1}
\psi(M(\xi^k),x^k) 
&\le \min_{x\in X} \psi(M(\xi^k),x) + \ul{\delta}_k 
\le \int_X \psi(M(\xi^k),x) \d \xi^*(x) + \ul{\delta}_k \notag\\
&\le -\big(\Psi(M(\xi^k)) - \Psi^*\big) + \ul{\delta}_k.
\end{align}
Inserting this inequality~\eqref{eq:sublin-convergence-rate-step2,1} into the estimate~\eqref{eq:sublin-convergence-rate-step1} established in the first step, we immediately obtain the desired estimate~\eqref{eq:sublin-convergence-rate-step2}.
\smallskip

As a third step, we finally establish the claimed convergence rate estimate~\eqref{eq:sublin-convergence-rate-estimate}. In fact, we prove a slightly more general estimate, namely we prove that
\begin{align} \label{eq:sublin-convergence-rate-step3}
\Psi(M(\xi^k)) - \Psi^* \le \frac{2}{k+2} \Big( \frac{k_0+1}{2} \big(\Psi(M(\xi^{k_0})) - \Psi^*\big) + 2C + \ol{c} + \ul{c} \Big)
\end{align}
for all $k \in K_{k_0+1} := \{l \in K: l \ge k_0+1\}$ and arbitrary $k_0 \in K$ with $k_0 \ge 1$. In the special case $k_0 = 1$, this reduces to the claimed convergence rate estimate~\eqref{eq:sublin-convergence-rate-estimate}. 
We prove~\eqref{eq:sublin-convergence-rate-step3} by induction over $K_{k_0+1}$ and adopt the shorthand notations
\begin{align} \label{eq:C'-def}
h_k := \Psi(M(\xi^k)) - \Psi^* 
\qquad \text{and} \qquad
C' := C + \frac{\ol{c} + \ul{c}}{2}.
\end{align}
In order to establish~\eqref{eq:sublin-convergence-rate-step3} for $k := k_0+1$, we set $\alpha := 2/(k_0+2) \in [0,2/3]$ in~\eqref{eq:sublin-convergence-rate-step2}. We thus obtain
\begin{align} \label{eq:sublin-convergence-rate-step3,induction-basis}
h_{k_0+1} 
&\le (1-\alpha) h_{k_0} + C \alpha^2 + \ol{\delta}_{k_0+1} + \ul{\delta}_{k_0} \alpha 
\le (1-\alpha) h_{k_0} + C' \alpha^2 \notag\\
&\le \frac{2}{k_0+3} \Big( \frac{k_0+1}{2} h_{k_0} + 2C'\Big).
\end{align}
In the second inequality of~\eqref{eq:sublin-convergence-rate-step3,induction-basis} we used the assumptions~\eqref{eq:opt-tolerances-of-subproblems}, while in the last inequality of~\eqref{eq:sublin-convergence-rate-step3,induction-basis} we used that
\begin{align}
1-\alpha &= \frac{k_0}{k_0+2} \le \frac{k_0+1}{k_0+3} = \frac{2}{k_0+3} \cdot \frac{k_0+1}{2}\\
\alpha^2 &\le \frac{3}{k_0+3} \cdot \frac{2}{k_0+2} \le \frac{2}{k_0+3} \cdot 2.
\end{align}
Since~\eqref{eq:sublin-convergence-rate-step3,induction-basis} is precisely the claimed estimate~\eqref{eq:sublin-convergence-rate-step3} for $k := k_0+1$, the induction basis is finished. 
We now move on to the induction step, that is, we assume that the claimed estimate~\eqref{eq:sublin-convergence-rate-step3} is true for some given $k \in K_{k_0+1}$ with $k+1 \in K_{k_0+1}$. Setting $\alpha := 2/(k+2) \in [0,2/3]$, we then conclude from~\eqref{eq:sublin-convergence-rate-step2} and the induction assumption that
\begin{align} \label{eq:sublin-convergence-rate-step3,induction-step}
h_{k+1} 
&\le (1-\alpha) h_k + C \alpha^2 + \ol{\delta}_{k+1} + \ul{\delta}_k \alpha
\le (1-\alpha) h_k + C' \alpha^2 \notag\\
&\le (1-\alpha) \frac{2}{k+2} \Big( \frac{k_0+1}{2} h_{k_0} + 2C' \Big) + C' \alpha^2 \notag\\
&= (1-\alpha) \frac{2}{k+2} \frac{k_0+1}{2} h_{k_0} + \Big((1-\alpha) \frac{4}{k+2} + \alpha^2 \Big) C'
\le \frac{2}{k+3} \Big( \frac{k_0+1}{2} h_{k_0} + 2C' \Big).
\end{align}
In the second inequality of~\eqref{eq:sublin-convergence-rate-step3,induction-step} we used the assumptions~\eqref{eq:opt-tolerances-of-subproblems}, while in the last inequality of~\eqref{eq:sublin-convergence-rate-step3,induction-step} we used that
\begin{align}
(1-\alpha) \frac{2}{k+2} 
&= \frac{k}{k+2} \cdot \frac{2}{k+2} \le \frac{k+1}{k+3} \cdot \frac{2}{k+2} \le \frac{2}{k+3} \\
(1-\alpha) \frac{4}{k+2} + \alpha^2 
&= \frac{k}{k+2} \cdot \frac{4}{k+2} + \frac{1}{k+2} \cdot \frac{4}{k+2}
= \frac{k+1}{k+2} \cdot \frac{4}{k+2} \le \frac{2}{k+3} \cdot 2.
\end{align}
Since~\eqref{eq:sublin-convergence-rate-step3,induction-step} is precisely the claimed estimate~\eqref{eq:sublin-convergence-rate-step3} with $k$ replaced by $k+1$, the induction step -- and thus the proof of~\eqref{eq:sublin-convergence-rate-step3} -- is finished. 
\end{proof}

\begin{cor} \label{cor:numbers-of-iterations-until-termination-under-sublinear-assumptions}
Suppose the assumptions of the above theorem are satisfied with a tolerance $\eps > \ul{\delta} := \sup_{k \ge 2} \ul{\delta}_k$.
Algorithms~\ref{algo:no-exchange} and~\ref{algo:exchange} then terminate at the very latest in iteration 
\begin{align} \label{eq:upper-bound-for-iteration-number}
k^* := \min \big\{ k \ge 2: \frac{2}{k+2} \big( \Psi(M(\xi^1)) - \ul{\Psi}^* + 2C'\big) \le \eps - \ul{\delta} \big\},
\end{align} 
where $\ul{\Psi}^*$ is an arbitrary lower bound for $\Psi^* := \min_{\xi \in \Xi(X)} \Psi(M(\xi))$ and $C'$ is defined as in~\eqref{eq:C'-def} above. 
\end{cor}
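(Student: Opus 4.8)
The plan is to feed the sublinear rate~\eqref{eq:sublin-convergence-rate-estimate} of Theorem~\ref{thm:sublin-convergence-rate} into the $\eps$-optimality characterization of Theorem~\ref{thm:equivalence-thm}. The key point is that the rate controls the suboptimality gap $h_k := \Psi(M(\xi^k)) - \Psi^*$, while Theorem~\ref{thm:equivalence-thm}, invoked with the self-referential tolerance $\eps := h_k$, converts this gap into a uniform lower bound on the sensitivity function. The algorithm's termination test is then met automatically once $h_k$ has dropped below $\eps - \ul{\delta}$, and the defining inequality of $k^*$ says exactly that this has happened by iteration $k^*$.

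First I would record the lower bound on $\psi$. By Lemma~\ref{lm:simple-algo-well-defined} (resp.\ Lemma~\ref{lm:exchange-algo-well-defined}), every iterate satisfies $\xi^k \in \Xifin(X)$, so $h_k \ge 0$ and $\xi^k$ is trivially an $h_k$-optimal design for $\Psi \circ M$. Since Conditions~\ref{cond:shared-assumption-1-algos-with-and-without-exchange} and~\ref{cond:shared-assumption-3-algos-with-and-without-exchange} imply, via Lemma~\ref{lm:sensi-fct-in-terms-of-derivative}, that $\Psi|_{\dom\Psi}$ is directionally differentiable w.r.t.~$M(\Xi(X))$ with the (unique) sensitivity function $\psi$, Theorem~\ref{thm:equivalence-thm} applied with tolerance $h_k$ gives
\[
\psi(M(\xi^k),x) \ge -h_k \qquad (x \in X,\ k \in K),
\]
and in particular $\psi(M(\xi^k),x^k) \ge -h_k$ for every $k \in K$.

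Next I would insert the rate and compare with the definition of $k^*$. By~\eqref{eq:sublin-convergence-rate-estimate}, and noting that the additive constant $L\,\big(\diam(M(\Xi(X)))\big)^{2}+\ol{c}+\ul{c}$ occurring there equals $2C'$ by the definitions~\eqref{eq:sublin-convergence-rate-C-def}--\eqref{eq:C'-def}, we have $h_k \le \frac{2}{k+2}\big((\Psi(M(\xi^1)) - \Psi^*) + 2C'\big)$ for all $k \in K$ with $k \ge 2$. Assume the algorithm has not terminated before iteration $k^*$, so that $k^* \in K$ (and $k^* \ge 2$ by construction). Using $\Psi^* \ge \ul{\Psi}^*$ and the minimality defining $k^*$, this yields $h_{k^*} \le \frac{2}{k^*+2}\big((\Psi(M(\xi^1)) - \ul{\Psi}^*) + 2C'\big) \le \eps - \ul{\delta}$. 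Combining this with the lower bound from the previous step and with $\ul{\delta}_{k^*} \le \ul{\delta} = \sup_{k\ge 2}\ul{\delta}_k$ (legitimate since $k^* \ge 2$), I obtain
\[
\psi(M(\xi^{k^*}),x^{k^*}) \ge -h_{k^*} \ge -\eps + \ul{\delta} \ge -\eps + \ul{\delta}_{k^*},
\]
which is precisely the termination condition of Algorithm~\ref{algo:no-exchange} (resp.\ Algorithm~\ref{algo:exchange}) at iteration $k^*$; hence the algorithm terminates at iteration $k^*$. Together with the trivial alternative that it has already terminated at an earlier iteration, this proves termination at the very latest in iteration $k^*$, and $k^*$ is well defined because $\eps - \ul{\delta} > 0$ while $\frac{2}{k+2}\big((\Psi(M(\xi^1)) - \ul{\Psi}^*) + 2C'\big) \to 0$ as $k \to \infty$. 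The only ingredient that is not pure bookkeeping is the bound $\psi(M(\xi^k),x) \ge -h_k$; I expect the only pitfalls to be matching the additive constant $2C'$ with the one in~\eqref{eq:sublin-convergence-rate-estimate} and double-checking that Lemma~\ref{lm:sensi-fct-in-terms-of-derivative} really makes Theorem~\ref{thm:equivalence-thm} applicable under the present hypotheses.
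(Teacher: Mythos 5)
Your proposal is correct and follows essentially the same route as the paper's proof: use the sublinear rate estimate to conclude that $\xi^{k^*}$ is $(\eps-\ul{\delta})$-optimal, convert this via Theorem~\ref{thm:equivalence-thm} into the lower bound $\psi(M(\xi^{k^*}),x^{k^*}) \ge -\eps+\ul{\delta} \ge -\eps+\ul{\delta}_{k^*}$, and observe that this is exactly the termination test. The only cosmetic difference is that you phrase the intermediate step as "$\xi^k$ is $h_k$-optimal, hence $\psi(M(\xi^k),\cdot) \ge -h_k$" before specializing to $k=k^*$, whereas the paper applies the equivalence theorem directly at $k^*$ inside a proof by contradiction; the constants match and the argument is sound.
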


\begin{proof}
Assume, to the contrary, that Algorithm~\ref{algo:no-exchange} or~\ref{algo:exchange} runs at least until iteration $k^*+1$. It then follows that $k^* \in K$ with $k^* \ge 2$ and thus, by the sublinear convergence rate estimate~\eqref{eq:sublin-convergence-rate-estimate} and by the definition~\eqref{eq:upper-bound-for-iteration-number} of $k^*$, that
\begin{align}
\Psi(M(\xi^{k^*})) - \Psi^* \le \frac{2}{k^*+2} \big( \Psi(M(\xi^1)) - \ul{\Psi}^* + 2C'\big) \le \eps - \ul{\delta}.
\end{align}
Consequently, $\xi^{k^*}$ is an $(\eps - \ul{\delta})$-optimal design for $\Psi \circ M$. It follows (Theorem~\ref{thm:equivalence-thm}) that
\begin{align}
\psi(M(\xi^{k^*}),x^{k^*}) \ge \min_{x \in X} \psi(M(\xi^{k^*}),x) \ge -\eps + \ul{\delta} \ge -\eps + \ul{\delta}_{k^*}.
\end{align}
And this in turn means, by the termination criteria of Algorithm~\ref{algo:no-exchange} and~\ref{algo:exchange}, that the algorithm terminates at iteration $k^*$. Contradiction to our initial  assumption!
\end{proof}

\begin{ex} \label{ex:sublin-convergence-result,one-stage-design-criterion}
Suppose that $X$, $m$, $M$ are as in Condition~\ref{cond:X-and-M} and that the one-stage design criterion $\Psi: \Rpsd^{d \times d} \to \R \cup \{\infty\}$ is defined by
\begin{align}
\Psi(M) := \Psi_{p,Q}(M) \qquad (M \in \Rpsd^{d \times d})
\end{align}
for some $p \in [0,\infty)$ and an invertible matrix $Q \in \R^{d \times d}$, see~\eqref{eq:Psi_p,Q-def}. Suppose further that~\eqref{eq:finite-criterion-on-Xi(X^0)} is satisfied for some finite subset $X^0$ of $X$. If the sequence $(\xi^k)_{k\in K}$ is then generated by Algorithm~\ref{algo:no-exchange} or~\ref{algo:exchange} with tolarences $\eps, \ol{\delta}_k, \ul{\delta}_k \in [0,\infty)$ satisfying~\eqref{eq:opt-tolerances-of-subproblems}, then all assumptions of our sublinear convergence rate theorem are satisfied and therefore the error estimate~\eqref{eq:sublin-convergence-rate-estimate} holds true. 
(Indeed, by the definition~\eqref{eq:Psi_p,Q-def} and by the assumed invertibility of $Q$, it follows that 
\begin{align} \label{eq:example-sublin-convergence-result,one-stage-design-criterion,1}
\dom \Psi = \{M \in \Rpsd^{d \times d}: \ran(Q) \subset \ran(M) \text{ and } Q^\top M^- Q \in \Rpd^{d \times d}\} = \Rpd^{d \times d}.
\end{align}
And therefore, Conditions~\ref{cond:shared-assumption-1-algos-with-and-without-exchange} and~\ref{cond:shared-assumption-3-algos-with-and-without-exchange} are satisfied (Corollary~\ref{cor:standard-properties-for-Psi_p,Q}). Additionally, it follows by the final statements of Lemmas~\ref{lm:simple-algo-well-defined} and~\ref{lm:exchange-algo-well-defined} that $\Psi(M(\xi^k)) \le \Psi(M(\xi^0)) + c_0 < \infty$  for all $k \in K$. And from this, in turn, it follows by~\eqref{eq:example-sublin-convergence-result,one-stage-design-criterion,1} and~\eqref{eq:Psi_p,Q-def}, using analogous arguments as for Lemma~\ref{lm:upper-bound-on-Psi(M)-implies-lower-bound-on-M}, that 
\begin{align} \label{eq:example-sublin-convergence-result,one-stage-design-criterion,2}
M(\xi^k) \ge \mu_0 \qquad (k \in K)
\end{align}
for some $\mu_0 > 0$. So, by~\eqref{eq:example-sublin-convergence-result,one-stage-design-criterion,1} and~\eqref{eq:example-sublin-convergence-result,one-stage-design-criterion,2}, the assumption~\eqref{eq:M-subset-of-dom-Psi} is satisfied as well. And finally, the $L$-smoothness of $\Psi|_{\mathcal{M}}$ follows by Lemma~\ref{lm:L-smoothness-sufficient-condition} and Corollary~\ref{cor:standard-properties-for-Psi_p,Q}, using that $\mathcal{M}$ is a subset of the compact subset $\mathcal{C} := \{A \in \Rpsd^{d \times d}: \mu_0/3 \le A \le \max_{x \in X} \norm{m(x)} \}$ of $\Rpd^{d \times d}$.) 
\end{ex}

\begin{ex} \label{ex:sublin-convergence-result,two-stage-design-criterion}
Suppose that $X$, $m$, $M$ are as in Condition~\ref{cond:X-and-M} and that the two-stage design criterion $\Psi: \Rpsd^{d \times d} \to \R$ is defined by
\begin{align}
\Psi(M) := \Psi_{p,Q}(\alpha M^0 + (1-\alpha)M) \qquad (M \in \Rpsd^{d \times d})
\end{align}
for some $\alpha \in (0,1)$ and $M^0 \in \Rpd^{d \times d}$ and for some $p \in [0,\infty)$ and a matrix $Q \in \R^{d \times s}$ with $\rk Q = s \le d$, see~\eqref{eq:Psi_p,Q-def} and~\eqref{eq:two-stage-design-criterion}. If the sequence $(\xi^k)_{k\in K}$ is then generated by Algorithm~\ref{algo:no-exchange} or~\ref{algo:exchange} with tolarences $\eps, \ol{\delta}_k, \ul{\delta}_k \in [0,\infty)$ satisfying~\eqref{eq:opt-tolerances-of-subproblems}, then all assumptions of our sublinear convergence rate theorem are satisfied and therefore the error estimate~\eqref{eq:sublin-convergence-rate-estimate} holds true. 
(Indeed, by the definition~\eqref{eq:Psi_p,Q-def} and the assumed positive definiteness of $M^0$ as well as the assumed injectivity of $Q$, it follows that 
\begin{align} \label{eq:example-sublin-convergence-result,two-stage-design-criterion,1}
\dom \Psi = \{M \in \Rpsd^{d \times d}: Q^\top (\alpha M^0 + (1-\alpha)M)^{-1} Q \in \Rpd^{d \times d}\} 
= \Rpsd^{d \times d}.
\end{align}
And therefore, Conditions~\ref{cond:shared-assumption-1-algos-with-and-without-exchange} and~\ref{cond:shared-assumption-3-algos-with-and-without-exchange} are satisfied (Corollary~\ref{cor:standard-properties-for-Psi_p,Q}). In particular, the assumption~\eqref{eq:M-subset-of-dom-Psi} is satisfied trivially. And finally, the $L$-smoothness $\Psi|_{\mathcal{M}}$ follows by Lemma~\ref{lm:L-smoothness-sufficient-condition} and Corollary~\ref{cor:standard-properties-for-Psi_p,Q}, using that $\mathcal{C} := \{\alpha M^0 + (1-\alpha)M: M \in \mathcal{M} \}$ is a compact subset of $\Rpd^{d \times d}$.) 
\end{ex}

\subsection{A linear convergence rate result}

In this section, we further improve the sublinear convergence rate result from the previous section to a convergence rate result with an even linear convergence rate. 
In order to get this significantly better linear convergence rate, we have to strengthen the assumptions on the design criterion $\Psi$ and the design space $X$ a bit more, namely essentially add a $\mu$-strong convexity assumption on $\Psi$ and assume the finiteness of $X$. As we will see, the strong convexity assumptions is still satisfied for a large variety of practically relevant design criteria. 
Similarly, the finiteness assumption poses no serious restriction either because in practice set of possible 
experimental points is finite anyways. 
\smallskip

We begin with a few preparations. Suppose that $\mathcal{A}$ is a finite subset of $\Rpsd^{d \times d}$ and that $M \in \Rpsd^{d \times d}$. 
A subset $\mathcal{S}$ of $\mathcal{A}$ is then called an active set for $M$ iff $M$ is a proper convex combination of the elements of $\mathcal{S}$, that is,
\begin{align}
M = \sum_{m \in \mathcal{S}} \lambda_m m \text{ for some } (\lambda_m)_{m\in\mathcal{S}} \in \Delta_{\mathcal{S}}^\circ, 
\end{align}
where $\Delta_{\mathcal{S}}^\circ := \{(\lambda_m)_{m\in\mathcal{S}} \in (0,1)^{|\mathcal{S}|}: \sum_{m \in \mathcal{S}} \lambda_m = 1\}$ is the interior of the $(|\mathcal{S}|-1)$-dimensional probability simplex $\Delta_{\mathcal{S}}$. 
We denote the set of subsets of $\mathcal{A}$ that are active for a given $M$ by 
\begin{align}
\mathcal{S}_{\mathcal{A}}(M) := \{\mathcal{S} \subset \mathcal{A}: \mathcal{S} \text{ is an active set for } M \}. 
\end{align}  
Suppose in addition that $\Psi: \mathcal{M} \to \R$ is a differentiable map and that $M \in \mathcal{M}$ and $\mathcal{S} \subset \mathcal{A}$. We then need the sets
\begin{align} \label{eq:M_M-and-N_M,S-def}
\mathcal{M}_M := \mathcal{M}_{M,\mathcal{A}} := \argmin_{m\in \mathcal{A}} D\Psi(M)m
\qquad \text{and} \qquad
\mathcal{N}_{M,\mathcal{S}} := \argmax_{n\in \mathcal{S}} D\Psi(M)n
\end{align}
of minimizers and maximizers of $a \mapsto D\Psi(M)a$ over $\mathcal{A}$ or $\mathcal{S}$, respectively. Additionally, we also need the set
\begin{align}
\mathcal{N}_M := \argmin_{n \in \{n': n' \in \mathcal{N}_{M,\mathcal{S}} \text{ for some } \mathcal{S} \in \mathcal{S}_{\mathcal{A}}(M)\}} D\Psi(M)n
\end{align}
of those maximizers that yield the smallest maximum $\max_{n \in \mathcal{S}} D\Psi(M)n$ among all active sets $\mathcal{S}$ for $M$. In other words, a matrix $n_M$ belongs to $\mathcal{N}_M$ if and only if there exists an active set $\mathcal{S}^* \in \mathcal{S}_{\mathcal{A}}(M)$ for $M$ such that
\begin{align} \label{eq:N_M-characterization}
n_M \in \mathcal{N}_{M,\mathcal{S}^*}
\qquad \text{and} \qquad
D\Psi(M)n_M = \min_{\mathcal{S} \in \mathcal{S}_{\mathcal{A}}(M)} \max_{n \in \mathcal{S}} D\Psi(M)n.
\end{align}

\begin{lm} \label{lm:lin-convergence-rate-lm-1}
Suppose that $\mathcal{A}$ is a finite subset of $\Rpsd^{d \times d}$ and let $\mathcal{M} := \conv(\mathcal{A})$. Suppose further that $\Psi: \mathcal{M} \to \R$ is convex and differentiable. 
\begin{itemize}
\item[(i)] If $M \in \mathcal{M}$ and $\mathcal{S} \in \mathcal{S}_{\mathcal{A}}(M)$, then 
\begin{align} \label{eq:lin-convergence-rate-lm-1(i)}
D\Psi(M)(n_{M,\mathcal{S}}-m_M) \ge D\Psi(M)(n_M-m_M) \ge \Psi(M)-\Psi^*
\end{align}
for arbitrary $m_M \in \mathcal{M}_M$, $n_M \in \mathcal{N}_M$ and $n_{M,\mathcal{S}} \in \mathcal{N}_{M,\mathcal{S}}$, where $\Psi^* := \min_{M \in \mathcal{M}} \Psi(M)$. Additionally, the quantity $D\Psi(M)(m_M-n_M)$ depends only on $M$ but not on the particular choices of $m_M \in \mathcal{M}_M$ and $n_M \in \mathcal{N}_M$.
\item[(ii)] If $D\Psi(M)(M'-M) < 0$ for some $M, M' \in \mathcal{M}$, then also $D\Psi(M)(m_M-n_M) < 0$ for all $m_M \in \mathcal{M}_M$ and $n_M \in \mathcal{N}_M$.
\end{itemize}
\end{lm}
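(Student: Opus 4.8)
The plan is to reduce both parts to elementary manipulations with the linear functional $D\Psi(M)$ together with the first-order convexity inequality for $\Psi$, working throughout with differences $D\Psi(M)(a-b)$ of points $a,b\in\mathcal{M}$, so that every quantity is independent of the chosen differentiable extension of $\Psi$ (cf.\ the remarks following~\eqref{eq:derivative-def}). First I would record that $\mathcal{S}_{\mathcal{A}}(M)\neq\emptyset$ for every $M\in\mathcal{M}=\conv(\mathcal{A})$: given any representation $M=\sum_{a\in\mathcal{A}}\mu_a a$ with $\mu_a\ge 0$ and $\sum_a\mu_a=1$, the set $\{a\in\mathcal{A}:\mu_a>0\}$ is an active set for $M$, so $\mathcal{N}_M$ is well defined. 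Moreover the $\argmin/\argmax$ sets $\mathcal{M}_M$, $\mathcal{N}_{M,\mathcal{S}}$, $\mathcal{N}_M$ are unchanged if $D\Psi(M)a$ is replaced by $D\Psi(M)(a-M)$, so they too are extension-independent, and $D\Psi(M)(a-M)$ is constant over $\mathcal{M}_M$ (equal to $\min_{a\in\mathcal{A}}D\Psi(M)(a-M)$), constant over each $\mathcal{N}_{M,\mathcal{S}}$, and constant over $\mathcal{N}_M$ (equal to the minimax in~\eqref{eq:N_M-characterization}). In particular $D\Psi(M)(m_M-n_M)$ depends only on $M$, which is the last assertion of~(i).

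For the left inequality in~(i) I would simply note that for any active set $\mathcal{S}$ one has $D\Psi(M)(n_{M,\mathcal{S}}-M)=\max_{n\in\mathcal{S}}D\Psi(M)(n-M)\ge\min_{\mathcal{S}'\in\mathcal{S}_{\mathcal{A}}(M)}\max_{n\in\mathcal{S}'}D\Psi(M)(n-M)=D\Psi(M)(n_M-M)$, and subtracting $D\Psi(M)(m_M-M)$ yields the claim. For the right inequality I would chain three bounds. Pick $M^*\in\mathcal{M}$ with $\Psi(M^*)=\Psi^*$ (it exists since $\mathcal{M}$ is compact and $\Psi$ is continuous). (a) Convexity of $\Psi$ gives $\Psi(M^*)\ge\Psi(M)+D\Psi(M)(M^*-M)$, i.e.\ $D\Psi(M)(M-M^*)\ge\Psi(M)-\Psi^*$. (b) Writing $M^*=\sum_{a\in\mathcal{A}}\mu_a a$ and using $D\Psi(M)(a-m_M)\ge 0$ for every $a\in\mathcal{A}$ (by the definition of $m_M$) gives $D\Psi(M)(M^*-m_M)\ge 0$, hence $D\Psi(M)(M-m_M)\ge D\Psi(M)(M-M^*)$. (c) Picking an active set $\mathcal{S}^*$ realizing the minimax in~\eqref{eq:N_M-characterization} with $n_M\in\mathcal{N}_{M,\mathcal{S}^*}$, and writing $M=\sum_{n\in\mathcal{S}^*}\lambda_n n$ with $\lambda_n>0$, we get $D\Psi(M)(n_M-M)=\sum_{n\in\mathcal{S}^*}\lambda_n D\Psi(M)(n_M-n)\ge 0$ since $D\Psi(M)(n_M-n)\ge 0$ for all $n\in\mathcal{S}^*$; thus $D\Psi(M)(n_M-m_M)\ge D\Psi(M)(M-m_M)$. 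Combining (c), (b), (a) gives $D\Psi(M)(n_M-m_M)\ge\Psi(M)-\Psi^*$.

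Part~(ii) then follows from the same ingredients. Given $M'\in\mathcal{M}$ with $D\Psi(M)(M'-M)<0$, write $M'=\sum_{a\in\mathcal{A}}\nu_a a$ and use $D\Psi(M)(a-m_M)\ge 0$ to get $D\Psi(M)(M'-m_M)\ge 0$, i.e.\ $D\Psi(M)(m_M-M')\le 0$; combining this with bound (c), namely $D\Psi(M)(n_M-M)\ge 0$, and with the hypothesis $D\Psi(M)(M-M')>0$, we obtain
\begin{align*}
D\Psi(M)(n_M-m_M)=D\Psi(M)(n_M-M)+D\Psi(M)(M-M')+D\Psi(M)(M'-m_M)>0,
\end{align*}
so $D\Psi(M)(m_M-n_M)<0$; since this quantity is independent of the choices of $m_M\in\mathcal{M}_M$ and $n_M\in\mathcal{N}_M$, it holds for all such choices.

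I do not expect a genuine obstacle: the content is purely convex-combinatorial, and the only thing requiring care is the bookkeeping that $\mathcal{S}_{\mathcal{A}}(M)$ is non-empty and that every expression appearing is a difference of points of $\mathcal{M}$ — which keeps it extension-independent and lets the gradient inequality be applied. For that reason it is cleanest to phrase the whole argument in terms of $D\Psi(M)(a-b)$ with $a,b\in\mathcal{M}$, as above, rather than in terms of the individual numbers $D\Psi(M)a$.
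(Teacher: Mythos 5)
Your proposal is correct and follows essentially the same route as the paper's proof: the minimality of $m_M$ over $\mathcal{A}$ (hence over $\conv(\mathcal{A})$) combined with the first-order convexity inequality at a minimizer $M^*$, the active-set representation $M=\sum_{n\in\mathcal{S}^*}\lambda_n n$ to get $D\Psi(M)(n_M-M)\ge 0$, and the definitional minimax chain for the first inequality, with part~(ii) obtained from the same two ingredients. The extra bookkeeping you include (non-emptiness of $\mathcal{S}_{\mathcal{A}}(M)$ and extension-independence of the differences) is left implicit in the paper but does not change the argument.
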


\begin{proof}
(i) Suppose $M \in \mathcal{M}$ and $\mathcal{S} \in \mathcal{S}_{\mathcal{A}}(M)$ and let $m_M \in \mathcal{M}_M$, $n_M \in \mathcal{N}_M$ and $n_{M,\mathcal{S}} \in \mathcal{N}_{M,\mathcal{S}}$. Also, choose an $\mathcal{S}^* \in \mathcal{S}_{\mathcal{A}}(M)$ with~\eqref{eq:N_M-characterization} and an $M^* \in \mathcal{M}$ with $\Psi^* = \Psi(M^*)$ (which is possible, of course, by the continuity of $\Psi$ and the compactness of $\mathcal{M}$). 
It  follows by the definition~(\ref{eq:M_M-and-N_M,S-def}.a) of $\mathcal{M}_M$ and the convexity of $\Psi$ that
\begin{align} \label{eq:lin-convergence-rate-lm-1(i),1}
D\Psi(M)(m_M-M) \le D\Psi(M)(M'-M) \le \Psi(M')-\Psi(M) 
\qquad (M' \in \mathcal{M}),
\end{align}  
where for the first inequality we used that every $M' \in \mathcal{M} = \conv(\mathcal{A})$ can be represented as a convex combination $M' = \sum_{m\in\mathcal{A}} \lambda_m' m$ of elements of $\mathcal{A}$. 
In order to obtain the first inequality in~\eqref{eq:lin-convergence-rate-lm-1(i)}, we have only to insert the definition~(\ref{eq:M_M-and-N_M,S-def}.b) of $\mathcal{N}_{M,\mathcal{S}}$ into~(\ref{eq:N_M-characterization}.b).
In order to obtain the second inequality in~\eqref{eq:lin-convergence-rate-lm-1(i)}, we have only to notice that $M = \sum_{n\in\mathcal{S}^*} \lambda_n n$ for some $(\lambda_n) \in \Delta_{\mathcal{S}^*}^\circ$, so that
\begin{align} \label{eq:lin-convergence-rate-lm-1(i),2}
D\Psi(M)n_M \ge \sum_{n\in\mathcal{S}^*} \lambda_n D\Psi(M)n = D\Psi(M)M
\end{align}
by virtue of~(\ref{eq:N_M-characterization}.a). Setting now $M' := M^*$ in~\eqref{eq:lin-convergence-rate-lm-1(i),1} and combining the resulting inequality with~\eqref{eq:lin-convergence-rate-lm-1(i),2}, we obtain the second inequality in~\eqref{eq:lin-convergence-rate-lm-1(i)}. 
And finally, the claimed independence of $D\Psi(M)(m_M-n_M)$ is obvious by~(\ref{eq:M_M-and-N_M,S-def}.a) and~(\ref{eq:N_M-characterization}.b).
\smallskip

(ii) Suppose now $D\Psi(M)(M'-M) < 0$ for some $M, M' \in \mathcal{M}$ and let $m_M \in \mathcal{M}_M$ and $n_M \in \mathcal{N}_M$. We then obtain the claimed inequality
\begin{align}
D\Psi(M)(m_M-n_M) \le D\Psi(M)(m_M-M) \le D\Psi(M)(M'-M) < 0
\end{align} 
by~\eqref{eq:lin-convergence-rate-lm-1(i),2} and the first inequality in~\eqref{eq:lin-convergence-rate-lm-1(i),1}.
\end{proof}

We recall from~\cite{LaJa15} the notion 
of the strong convexity constant $\convexconst$ and of the curvature constant $\curvconst$ of a differentiable map $\Psi: \mathcal{M} \to \R$ on a convex subset $\mathcal{M}$ of $\Rpsd^{d \times d}$. Specifically,
\begin{gather}
\convexconst := \inf_{\substack{M,M' \in \mathcal{M},\\ D\Psi(M)(M'-M) < 0}} \frac{2}{\alpha(M,M')^2} \big( \Psi(M') - \Psi(M) - D\Psi(M)(M'-M) \big)
\label{eq:strong-convexity-constant}\\
\curvconst := \sup_{\substack{M,M' \in \mathcal{M},\\ \alpha \in (0,1]}} \frac{2}{\alpha^2} \big( \Psi(M + \alpha(M'-M) - \Psi(M) - \alpha D\Psi(M)(M'-M) \big) 
\label{eq:curvature-constant}
\end{gather}
where $\alpha(M,M') := D\Psi(M)(M'-M)/D\Psi(M)(m_M-n_M)$ with arbitrary $m_M \in \mathcal{M}_M$ and $n_M \in \mathcal{N}_M$. In view of Lemma~\ref{lm:lin-convergence-rate-lm-1}, the quantity $\alpha(M,M')$ is a well-defined positive real number as soon as $D\Psi(M)(M'-M) < 0$. 
We also recall from~\cite{LaJa15, PeRo18} the notion of the pyramidal width $\pwidth(\mathcal{A})$ of a finite subset $\mathcal{A}$ of $\Rpsd^{d \times d}$. Specifically,
\begin{align}
\pwidth(\mathcal{A}) 
:=& \min_{\substack{\mathcal{F} \in \faces(\conv(\mathcal{A})),\\ \emptyset \ne \mathcal{F} \ne \conv(\mathcal{A})}} \dist(\mathcal{F}, \conv(\mathcal{F} \setminus \mathcal{A})) \notag\\ 
=& \min_{\substack{M,M' \in \conv(\mathcal{A}),\\ M \ne M'}} \pdirwidth(\mathcal{A},M'-M,M)
\end{align}
(Theorems~1 and 2 of~\cite{PeRo18}), where $\faces(\conv(\mathcal{A}))$ denotes the set of faces of the polytope $\conv(\mathcal{A})$ (Section~2.1 of~\cite{Ziegler}) and where the pyramidal directional width of $\mathcal{A}$ in the direction $J \ne 0$ with base point $M$ is defined by
\begin{align}
\pdirwidth(\mathcal{A},J,M) := \min_{\mathcal{S} \in \mathcal{S}_{\mathcal{A}}(M)} \max_{m \in \mathcal{A}, n \in \mathcal{S}} \scprd{J/|J|,m-n}.
\end{align}
See the first formula after Example~2 and the last formula before Theorem~2 in~\cite{PeRo18}. In the above formula, the scalar product $\scprd{\cdot, \cdot}$ is the one induced by the Frobenius norm $|\cdot|$.  

\begin{lm} \label{lm:lin-convergence-rate-lm-2}
Suppose that $\mathcal{A}$ is a finite subset of $\Rpsd^{d \times d}$ with $|\mathcal{A}| \ge 2$ and let $\mathcal{M} := \conv(\mathcal{A})$. Suppose further that $\Psi: \mathcal{M} \to \R$ is differentiable and $\mu$-strongly convex and $L$-smooth w.r.t.~$|\cdot|$ for some $\mu, L \in (0,\infty)$. Then
\begin{align}
0 < \mu \cdot (\pwidth(\mathcal{A}))^2 \le \convexconst \le \curvconst \le L \cdot (\diam(\mathcal{M}))^2, 
\end{align}
where $\diam(\mathcal{M}) := \sup\{|M-N|: M,N \in \mathcal{M}\}$. 
\end{lm}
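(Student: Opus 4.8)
The plan is to prove the chain of inequalities from the outside in. The rightmost bound $\curvconst \le L(\diam(\mathcal{M}))^2$ is immediate from $L$-smoothness: for $M,M' \in \mathcal{M}$ and $\alpha \in (0,1]$ the point $M + \alpha(M'-M) = (1-\alpha)M + \alpha M'$ lies in the convex set $\mathcal{M}$, so applying~\eqref{eq:L-smoothness-def} with $x := M$ and $y := M + \alpha(M'-M)$ gives $\Psi(M+\alpha(M'-M)) - \Psi(M) - \alpha D\Psi(M)(M'-M) \le \tfrac{L}{2}\alpha^2|M'-M|^2 \le \tfrac{L}{2}\alpha^2(\diam(\mathcal{M}))^2$; dividing by $\alpha^2/2$ and taking the supremum in~\eqref{eq:curvature-constant} yields the claim. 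Note also that each quantity appearing in the suprema and infima below is nonnegative by convexity, so $\curvconst \ge 0$ and $\convexconst \ge 0$.

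For the middle bound $\convexconst \le \curvconst$ I would exhibit a single admissible pair in the infimum~\eqref{eq:strong-convexity-constant} whose value is at most $\curvconst$. Since $\mu > 0$, $\Psi$ is strictly convex and hence has a unique minimizer $M^*$ over $\mathcal{M} = \conv(\mathcal{A})$. As $|\mathcal{A}| \ge 2$, there is a vertex $M^\sharp \in \mathcal{A}$ with $M^\sharp \ne M^*$, and then, since $m_{M^\sharp}$ minimizes $a \mapsto D\Psi(M^\sharp)a$ over $\mathcal{A}$ and $M^*$ is a convex combination of elements of $\mathcal{A}$, convexity gives $D\Psi(M^\sharp)(m_{M^\sharp}-M^\sharp) \le D\Psi(M^\sharp)(M^*-M^\sharp) \le \Psi(M^*)-\Psi(M^\sharp) < 0$, so $(M^\sharp,m_{M^\sharp})$ is admissible. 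Because $M^\sharp$ is an extreme point of $\conv(\mathcal{A})$, its only active set is $\{M^\sharp\}$, so $n_{M^\sharp} = M^\sharp$ and $\alpha(M^\sharp,m_{M^\sharp}) = 1$. Consequently $\convexconst \le 2\big(\Psi(m_{M^\sharp}) - \Psi(M^\sharp) - D\Psi(M^\sharp)(m_{M^\sharp}-M^\sharp)\big)$, and the right-hand side is exactly the value obtained in~\eqref{eq:curvature-constant} for base point $M^\sharp$, direction $m_{M^\sharp} \in \mathcal{M}$ and step $\alpha = 1$, hence $\le \curvconst$. (In particular the infimum in~\eqref{eq:strong-convexity-constant} is over a non-empty set, so $\convexconst < \infty$.)

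The leftmost bound is the main content. Positivity of $\mu(\pwidth(\mathcal{A}))^2$ is clear: $\mu > 0$ by hypothesis, and $\pwidth(\mathcal{A}) > 0$ because $\mathcal{A}$ is finite with $|\mathcal{A}| \ge 2$, so $\conv(\mathcal{A})$ is a polytope of dimension at least one and the distance from any of its proper non-empty faces to the convex hull of the remaining vertices is strictly positive. For the inequality $\mu(\pwidth(\mathcal{A}))^2 \le \convexconst$, fix an admissible pair $M, M'$ with $D\Psi(M)(M'-M) < 0$; by Lemma~\ref{lm:lin-convergence-rate-lm-1}(ii) one has $D\Psi(M)(m_M-n_M) < 0$, so $\alpha(M,M')$ is a well-defined positive number. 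On the one hand, $\mu$-strong convexity~\eqref{eq:mu-strong-convexity-def} gives $\Psi(M') - \Psi(M) - D\Psi(M)(M'-M) \ge \tfrac{\mu}{2}|M'-M|^2$. On the other hand, the geometric core of the Lacoste-Julien--Jaggi analysis is the pyramidal-width estimate
\begin{align} \label{eq:plan-pwidth-estimate}
\frac{|M'-M|}{\alpha(M,M')} \ge \pwidth(\mathcal{A}).
\end{align}
Multiplying the first inequality by $2/\alpha(M,M')^2$ and inserting~\eqref{eq:plan-pwidth-estimate} yields $\tfrac{2}{\alpha(M,M')^2}\big(\Psi(M')-\Psi(M)-D\Psi(M)(M'-M)\big) \ge \mu\,(\pwidth(\mathcal{A}))^2$, and taking the infimum over all admissible pairs gives $\convexconst \ge \mu(\pwidth(\mathcal{A}))^2$.

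The only non-routine ingredient is~\eqref{eq:plan-pwidth-estimate}, and this is where I expect the main difficulty. Let $G \in \R^{d\times d}$ denote the Frobenius--Riesz representative of $D\Psi(M)$, that is, $D\Psi(M)E = \scprd{G,E}$ for all $E$; then $G \ne 0$ (since $D\Psi(M)(M'-M) \ne 0$), and from the definitions~\eqref{eq:M_M-and-N_M,S-def}--\eqref{eq:N_M-characterization} of $m_M$ and $n_M$ one checks that $\scprd{-G/|G|, m_M - n_M} = \pdirwidth(\mathcal{A}, -G, M)$, while $\alpha(M,M') = |\scprd{G,M'-M}| / |\scprd{G,m_M-n_M}|$. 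Thus~\eqref{eq:plan-pwidth-estimate} is equivalent to $\pdirwidth(\mathcal{A}, -G, M) \ge \pwidth(\mathcal{A}) \cdot |\scprd{G,M'-M}|/(|G|\,|M'-M|)$, i.e.\ the pyramidal directional width of $\mathcal{A}$ in the steepest-descent direction $-G$ at $M$ dominates $\pwidth(\mathcal{A})$ times the cosine of the angle between $-G$ and the feasible direction $M'-M$. This is precisely the pyramidal-width lemma of~\cite{LaJa15}, whose proof rests on the face structure of the polytope $\conv(\mathcal{A})$ and is cleanly organized through the two equivalent descriptions of $\pwidth(\mathcal{A})$ recalled above from~\cite{PeRo18}; I would reproduce (a specialization of) that argument, or cite it directly.
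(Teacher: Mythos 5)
The paper gives no proof of this lemma at all: the statement is imported from \cite{LaJa15} (in the pyramidal-width reformulation of \cite{PeRo18}), and the text passes directly from the lemma to Theorem~\ref{thm:lin-convergence-rate}. So there is no in-paper argument to compare against; judged on its own, your proposal is structurally sound and supplies more detail than the paper does. The two outer bounds are correct and complete: $\curvconst \le L\,(\diam(\mathcal{M}))^2$ is exactly the computation from~\eqref{eq:L-smoothness-def} that you give, and your proof of $\convexconst \le \curvconst$ via the single admissible pair $(M^\sharp, m_{M^\sharp})$ at an extreme point $M^\sharp \ne M^*$ --- where the only active set is $\{M^\sharp\}$, hence $n_{M^\sharp}=M^\sharp$ and $\alpha(M^\sharp,m_{M^\sharp})=1$, so that the infimand coincides with a term of the supremum in~\eqref{eq:curvature-constant} --- is a clean and valid argument. (One pedantic caveat: it needs the singleton $\{M^\sharp\}$ to count as an active set, which the paper's literal definition of $\Delta^\circ_{\mathcal{S}}$ via $(0,1)^{|\mathcal{S}|}$ excludes; this is evidently a slip in the paper's definition rather than in your argument, since the paper itself relies on singleton active sets elsewhere, e.g.\ for one-point designs in the proof of Theorem~\ref{thm:lin-convergence-rate}.)

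The substance of the lemma is the leftmost inequality, and there your write-up is a correct reduction rather than a proof. Everything is properly funneled into the estimate $|M'-M|/\alpha(M,M') \ge \pwidth(\mathcal{A})$, and your translation of it into $\pdirwidth(\mathcal{A},-G,M) \ge \pwidth(\mathcal{A})\cdot\scprd{-G,M'-M}/(|G|\,|M'-M|)$ is accurate --- but that inequality \emph{is} the geometric core of the Lacoste-Julien--Jaggi analysis, and you leave it to a citation. Be aware that the second characterization of $\pwidth(\mathcal{A})$ recalled from \cite{PeRo18} does not yield it directly, since $-G$ need not be a feasible direction of the form $N'-N$ with $N,N' \in \conv(\mathcal{A})$; the cosine factor is exactly what compensates for this, and establishing it requires the face-decomposition argument of \cite{LaJa15}. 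Deferring that step to the literature is defensible --- it is what the paper does, only tacitly, for the entire lemma --- but you should recognize that you have fully proved the three easy inequalities and reduced the fourth to the cited result rather than established it.
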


\begin{thm} \label{thm:lin-convergence-rate}
Suppose that Conditions~\ref{cond:shared-assumption-1-algos-with-and-without-exchange} and~\ref{cond:shared-assumption-3-algos-with-and-without-exchange} are satisfied. Suppose further that the set $\mathcal{A} := \{m(x):x \in X\}$ of one-point information matrices is finite with $|\mathcal{A}| \ge 2$, that 
\begin{align} \label{eq:M-subset-of-dom-Psi,linear-convergence-rate}
\mathcal{M} := M(\Xi(X)) \subset \dom\Psi,
\end{align}
and that $\Psi|_{\mathcal{M}}$ is $\mu$-strongly convex and $L$-smooth w.r.t.~$|\cdot|$ for some $\mu, L \in (0,\infty)$. Suppose finally that $X^0$ is a finite subset of $X$ satisfying~\eqref{eq:finite-criterion-on-Xi(X^0)} and that $(\xi^k)_{k\in K}$ is generated by Algorithm~\ref{algo:no-exchange} or Algorithm~\ref{algo:exchange} with optimality tolerances $\eps \in [0,\infty)$ and $\ol{\delta}_k = 0 = \ul{\delta}_k$. We then have the following linear convergence rate estimate:
\begin{align} \label{eq:lin-convergence-rate-estimate}
\Psi(M(\xi^k)) - \Psi^* \le r^k \cdot (\Psi(M(\xi^0)) - \Psi^*) 
\end{align}
for all $k \in K$, where $\Psi^* := \min_{\xi \in \Xi(X)} \Psi(M(\xi))$ and $r := 1 - \min\{1/2, \convexconst/\curvconst\}$ and
\begin{align} \label{eq:lin-convergence-rate}
1/2 \le r \le 1 - \min\{ 1/2, (\mu/L) \cdot (\pwidth(\mathcal{A})/\diam(\mathcal{M}))^2 \} < 1.
\end{align}
\end{thm}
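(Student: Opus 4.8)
The plan is to recognise that, in the space of information matrices, Algorithm~\ref{algo:no-exchange} and Algorithm~\ref{algo:exchange} run with exact subproblem solves ($\ol\delta_k=\ul\delta_k=0$) form a fully corrective Frank--Wolfe iteration on the polytope $\mathcal{M}=M(\Xi(X))=\conv(\mathcal{A})$ (the identity $M(\Xi(X))=\conv(\mathcal{A})$ being~\eqref{eq:designs-with-finite-supp,3}), and then to carry out a linear-rate analysis along the lines of~\cite{LaJa15} in terms of $\convexconst$ and $\curvconst$. Write $M^k:=M(\xi^k)$ and $h_k:=\Psi(M^k)-\Psi^*\ge 0$. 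I would first record a few reductions: strong convexity of $\Psi|_\mathcal{M}$ makes~\eqref{eq:doe-in-M} uniquely solvable, say with solution $M^*$; for both algorithms $\supp\xi^k\cup\{x^k\}\subseteq X^{k+1}$, so $\xi^k\in\Xi(X^{k+1})$ and, by exactness of $\xi^{k+1}$, $h_{k+1}\le h_k$; and whenever $h_k>0$, convexity of $\Psi$ gives $\psi(M^k,x^k)=\min_{x\in X}\psi(M^k,x)\le -h_k<0$ (the computation~\eqref{eq:sublin-convergence-rate-step2,1} with $\ul\delta_k=0$, using that $x^k$ now solves the violator problem exactly). Since $h_{k+1}\le h_k=0$ trivially gives $h_{k+1}\le r\,h_k$ when $h_k=0$, it suffices to prove $h_{k+1}\le r\,h_k$ whenever $h_k>0$ and $k{+}1\in K$, after which~\eqref{eq:lin-convergence-rate-estimate} follows by induction on $k\in K$.

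Next I would fix such a $k$ and extract the relevant ``Frank--Wolfe data''. As $\xi^k$ solves~\eqref{eq:DOE(X^k)-simple} (resp.~\eqref{eq:DOE(X^k)-exchange}) exactly, Theorem~\ref{thm:equivalence-thm} applied on $X^k$ with tolerance $0$, together with the formula $\psi(M,x)=D\Psi(M)(m(x)-M)$ of Lemma~\ref{lm:sensi-fct-in-terms-of-derivative}, gives $\psi(M^k,x)=D\Psi(M^k)(m(x)-M^k)\ge 0$ for all $x\in X^k$, with equality on $\supp\xi^k$. This yields: (a) $m(x^k)$ is a Frank--Wolfe vertex, $m(x^k)\in\mathcal{M}_{M^k}=\argmin_{a\in\mathcal{A}}D\Psi(M^k)a$, since $x^k$ minimises $\psi(M^k,\cdot)=D\Psi(M^k)m(\cdot)-D\Psi(M^k)M^k$ over $X$; (b) $\mathcal{S}^k:=\{m(x):x\in\supp\xi^k\}$ is an active set for $M^k$, because by Lemma~\ref{lm:support} $M^k=\sum_{x\in\supp\xi^k}\xi^k(\{x\})m(x)$ with positive weights (combine coinciding values if necessary); (c) every away vertex $n^k\in\mathcal{N}_{M^k,\mathcal{S}^k}$ satisfies $D\Psi(M^k)n^k=D\Psi(M^k)M^k$, since every element of $\mathcal{S}^k$ realises this common value. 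Writing $g_k:=-\psi(M^k,x^k)$ for the Frank--Wolfe gap, (a) and (c) give
\begin{equation*}
g_k=D\Psi(M^k)\big(M^k-m(x^k)\big)=D\Psi(M^k)\big(n^k-m_{M^k}\big)=D\Psi(M^k)\big(n_{M^k,\mathcal{S}^k}-m_{M^k}\big),
\end{equation*}
and $g_k\ge h_k>0$ by the reduction above.

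The heart of the matter is the geometric strong convexity estimate $g_k^2\ge 2\convexconst h_k$. I would apply the definition~\eqref{eq:strong-convexity-constant} of $\convexconst$ to the pair $(M^k,M^*)$, which is admissible because $D\Psi(M^k)(M^*-M^k)\le\Psi^*-\Psi(M^k)=-h_k<0$. Abbreviating $d_k:=-D\Psi(M^k)(M^*-M^k)\ge h_k$ and $\tilde g_k:=D\Psi(M^k)(n_{M^k}-m_{M^k})$, which is strictly positive by Lemma~\ref{lm:lin-convergence-rate-lm-1}(ii) and satisfies $\tilde g_k\ge h_k$ by Lemma~\ref{lm:lin-convergence-rate-lm-1}(i), one has $\alpha(M^k,M^*)=d_k/\tilde g_k$, so~\eqref{eq:strong-convexity-constant} reads $0<\convexconst\le 2(\tilde g_k)^2(d_k-h_k)/d_k^2$. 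As $\convexconst>0$ this forces $d_k>h_k$, and since $d^2/(d-h_k)\ge 4h_k$ for every $d>h_k$, we obtain $(\tilde g_k)^2\ge 2\convexconst h_k$. Finally, the first inequality in Lemma~\ref{lm:lin-convergence-rate-lm-1}(i) applied to the active set $\mathcal{S}^k$ gives $g_k=D\Psi(M^k)(n_{M^k,\mathcal{S}^k}-m_{M^k})\ge D\Psi(M^k)(n_{M^k}-m_{M^k})=\tilde g_k$, hence $g_k^2\ge 2\convexconst h_k$.

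It then remains to combine this with a descent step. For every $\alpha\in[0,1]$, the design $(1-\alpha)\xi^k+\alpha\delta_{x^k}$ lies in $\Xi(X^{k+1})$ and has information matrix $M^k+\alpha(m(x^k)-M^k)\in\mathcal{M}$; so exactness of $\xi^{k+1}$ together with the definition~\eqref{eq:curvature-constant} of $\curvconst$ (with $M=M^k$, $M'=m(x^k)$) gives $h_{k+1}\le h_k-\alpha g_k+\tfrac{\curvconst}{2}\alpha^2$ for all $\alpha\in[0,1]$. Optimising in $\alpha$: if $g_k\le\curvconst$ take $\alpha=g_k/\curvconst$ to get $h_{k+1}\le h_k-\tfrac{g_k^2}{2\curvconst}\le(1-\convexconst/\curvconst)h_k$; if $g_k>\curvconst$ take $\alpha=1$ to get $h_{k+1}\le h_k-g_k+\tfrac{\curvconst}{2}<h_k-\tfrac12 g_k\le\tfrac12 h_k$ (using $g_k\ge h_k$). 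Either way $h_{k+1}\le r\,h_k$ with $r=1-\min\{1/2,\convexconst/\curvconst\}$, and induction yields~\eqref{eq:lin-convergence-rate-estimate}; the two-sided bound~\eqref{eq:lin-convergence-rate} then follows from $0<\mu\,\pwidth(\mathcal{A})^2\le\convexconst\le\curvconst\le L\,\diam(\mathcal{M})^2$ (Lemma~\ref{lm:lin-convergence-rate-lm-2}). The step I expect to be the main obstacle is exactly this geometric strong convexity estimate and the bookkeeping that makes it applicable --- in particular fact (c) above (the vanishing, at the fully corrected iterate, of the away gap of $\mathcal{S}^k$, which collapses the pairwise gap to $g_k$) and the care needed to insert the correct pair $(M^k,M^*)$ and the correct active set into Lemma~\ref{lm:lin-convergence-rate-lm-1} and~\eqref{eq:strong-convexity-constant}; a secondary point is to check that the argument is identical for Algorithm~\ref{algo:no-exchange} and Algorithm~\ref{algo:exchange}, the only structural input being $\supp\xi^k\cup\{x^k\}\subseteq X^{k+1}$.
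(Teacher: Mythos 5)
Your proposal is correct and follows essentially the same route as the paper's proof: the same identification of the iteration as a fully corrective Frank--Wolfe step on $\conv(\mathcal{A})$, the same use of the vanishing sensitivity on $\supp\xi^k$ to collapse the away/pairwise gap to $g_k$, the same geometric strong convexity bound $h_k \le g_k^2/(2\convexconst)$ via Lemma~\ref{lm:lin-convergence-rate-lm-1} and~\eqref{eq:strong-convexity-constant}, and the same case split on $\alpha_k^* = \min\{g_k/\curvconst,1\}$ in the descent step. The only deviations are cosmetic (handling $h_k=0$ directly instead of via the termination rule, and rearranging the quadratic algebra in the strong-convexity step as $(d-2h_k)^2\ge 0$).
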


\begin{proof}
In the entire proof, we use the abbreviations
\begin{align} \label{eq:lin-convergence-rate,M_k,m_k,n_k}
M_k := M(\xi^k) \qquad \text{and} \qquad m_k := m(x^k) \qquad \text{and} \qquad n_k := n_{M_k,\mathcal{S}_k} \in \mathcal{N}_{M_k,\mathcal{S}_k},
\end{align}
where $n_{M_k,\mathcal{S}_k}$ is an arbitrary element of $\mathcal{N}_{M_k,\mathcal{S}_k} = \argmax_{n\in\mathcal{S}_k} D\Psi(M_k)n$ and
\begin{align} \label{eq:lin-convergence-rate,S_k}
\mathcal{S}_k := \{m(x): x \in \supp \xi^k \}.
\end{align}
Additionally, we use the abbreviations
\begin{align} \label{eq:lin-convergence-rate,g_k,h_k}
g_k := D\Psi(M_k)(n_k-m_k) \qquad \text{and} \qquad h_k := \Psi(M_k) - \Psi(M^*),
\end{align}
where $M^* := M(\xi^*)$ and $\xi^*$ is an arbitrary optimal design for $\Psi \circ M$ (Theorem~\ref{thm:ex-of-optimal-designs}). 
It is clear by the definition of $n_k$ and $\mathcal{S}_k$ that
\begin{align} \label{eq:lin-convergence-rate,y_k}
n_k = m(y^k) \text{ for some } y^k \in \supp\xi^k.
\end{align}
Also, by the proof of Lemma~\ref{lm:designs-with-finite-supp}, 
\begin{align} \label{eq:lin-convergence-rate,M=conv(A)}
\mathcal{M} = M(\Xi(X)) = \conv(\{m(x): x\in X\}) = \conv(\mathcal{A}).
\end{align}
With these preliminaries at hand, we can now enter the core of the proof. We proceed in four steps, the third and fourth step following the lines of proof from~\cite{LaJa15} (Theorem~8).
\smallskip

As a first step, we show that
\begin{align} \label{eq:lin-convergence-rate-step1}
D\Psi(M_k)(m_k-M_k) = -g_k
\qquad \text{and} \qquad
g_k \ge D\Psi(M_k)(n_{M_k}-m_{M_k}) \ge h_k
\end{align}
for all $k \in K$ and arbitrary $m_{M_k} \in \mathcal{M}_{M_k}$ and $n_{M_k} \in \mathcal{N}_{M_k}$.
In order to prove~(\ref{eq:lin-convergence-rate-step1}.a), choose and fix $k \in K$. Since $\xi^k$ is an optimal design for $\Psi \circ M$ on $X^k$ by the algorithms' definitions with $\ol{\delta}_k = 0$, it follows that 
\begin{align} \label{eq:lin-convergence-rate-step1,1}
D\Psi(M_k)(n_k-M_k) = \psi(M(\xi^k),y^k) = 0,
\end{align}
where for the first equality we used~\eqref{eq:lin-convergence-rate,y_k} together with Lemma~\ref{lm:sensi-fct-in-terms-of-derivative} and for the second equality we used the last statement of Theorem~\ref{thm:equivalence-thm} with $X$ replaced by $X^k$. In view of~(\ref{eq:lin-convergence-rate,g_k,h_k}.a) and~\eqref{eq:lin-convergence-rate-step1,1}, the asserted equality~(\ref{eq:lin-convergence-rate-step1}.a) is now clear. 
In order to prove also~(\ref{eq:lin-convergence-rate-step1}.b), choose and fix $k \in K$ and $m_{M_k} \in \mathcal{M}_{M_k}$ and $n_{M_k} \in \mathcal{N}_{M_k}$. Since $x^k$ is a minimizer of $X \ni x \mapsto \psi(M_k,x) = D\Psi(M_k)(m(x)-M_k)$ by the algorithms' definitions with $\ul{\delta}_k = 0$, it follows that
\begin{align}  \label{eq:lin-convergence-rate-step1,2}
m_k = m(x^k) \in \argmin_{m\in\mathcal{A}} D\Psi(M_k)m = \mathcal{M}_{M_k}.
\end{align}
Since $\mathcal{S}_k \subset \mathcal{A}$ and $M_k = M(\xi^k) = \int_X m(x) \d\xi^k(x) = \sum_{x \in \supp\xi^k} \xi^k(\{x\}) m(x)$, it further follows that $\mathcal{S}_k$ is an active set for $M_k$, in short, 
\begin{align} \label{eq:lin-convergence-rate-step1,3}
\mathcal{S}_k \in \mathcal{S}_{\mathcal{A}}(M_k).
\end{align}
In view of~(\ref{eq:lin-convergence-rate,M_k,m_k,n_k}.c), (\ref{eq:lin-convergence-rate,g_k,h_k}.a), \eqref{eq:lin-convergence-rate-step1,2} and~\eqref{eq:lin-convergence-rate-step1,3}, the asserted inequality chain~(\ref{eq:lin-convergence-rate-step1}.b) is an immediate consequence of Lemma~\ref{lm:lin-convergence-rate-lm-1}(i). 
\smallskip

As a second step, we show that 
\begin{align} \label{eq:lin-convergence-rate-step2}
h_k > 0 
\qquad \text{and} \qquad
D\Psi(M_k)(M^*-M_k) < 0
\end{align}
for every non-terminal iteration index $k \in K$. So, let $k \in K$ be non-terminal. It then follows by the algorithms' termination rule that $\psi(M(\xi^k),x^k) < -\eps \le 0$.
Consequently, $\xi^k$ is not an optimal design for $\Psi \circ M$ 
by Theorem~\ref{thm:equivalence-thm} and therefore
\begin{align} \label{eq:lin-convergence-rate-step2,1}
h_k = \Psi(M(\xi^k)) - \min_{\xi \in \Xi(X)} \Psi(M(\xi)) > 0,
\end{align}
that is, (\ref{eq:lin-convergence-rate-step2}.a) is proved. 
Additionally, by the convexity of $\Psi$ and by~\eqref{eq:lin-convergence-rate-step2,1}, 
\begin{align} \label{eq:lin-convergence-rate-step2,2}
D\Psi(M_k)(M^*-M_k) 
&= \lim_{t\searrow 0} \frac{\Psi(M_k+t(M^*-M_k))-\Psi(M_k)}{t} \notag\\
&\le \Psi(M^*)-\Psi(M_k) = -h_k < 0,
\end{align}
that is, (\ref{eq:lin-convergence-rate-step2}.b) is proved as well.
\smallskip

As a third step, we show that for every non-terminal iteration index $k \in K$ the estimate
\begin{align} \label{eq:lin-convergence-rate-step3}
h_k \le \frac{g_k^2}{2 \convexconst}
\end{align}
holds true. 
So, let $k \in K$ be non-terminal. It then follows by Lemma~\ref{lm:lin-convergence-rate-lm-1}(ii) in conjunction with~(\ref{eq:lin-convergence-rate-step2}.b) that
\begin{align*} 
\alpha_k := \alpha(M_k,M^*) = \frac{D\Psi(M_k)(M^*-M_k)}{D\Psi(M_k)(m_{M_k}-n_{M_k})} > 0.
\end{align*}
So, by the definition~\eqref{eq:strong-convexity-constant} of the strong convexity constant $\convexconst$ combined with~(\ref{eq:lin-convergence-rate-step2}.b) and~(\ref{eq:lin-convergence-rate-step1}.b), it follows that
\begin{align*}
\frac{\alpha_k^2}{2} \convexconst 
&\le \Psi(M^*)-\Psi(M_k) - D\Psi(M_k)(M^*-M_k)
= -h_k + \alpha_k D\Psi(M_k)(n_{M_k}-m_{M_k}) \notag \\
&\le -h_k + \alpha_k g_k.
\end{align*}
And therefore we obtain
\begin{align}
h_k \le \alpha_k g_k - \alpha_k^2 \frac{\convexconst}{2} 
= \bigg(2\alpha_k \cdot \frac{g_k}{\convexconst}\bigg) \frac{\convexconst}{2}
\le \frac{g_k^2}{\convexconst^2} \frac{\convexconst}{2}
= \frac{g_k^2}{2 \convexconst},
\end{align}
as desired.
\smallskip

As a fourth step, we show that for every non-terminal iteration index $k \in K$ the estimate
\begin{align} \label{eq:lin-convergence-rate-step4}
h_{k+1} \le r \cdot h_k
\end{align}
holds true with $r := 1 - \min\{1/2, \convexconst/\curvconst\}$, and then conclude the proof of the theorem. 
So, let $k \in K$ be non-terminal. It then follows by the updating rule both of Algorithm~\ref{algo:no-exchange} and of Algorithm~\ref{algo:exchange} that
\begin{align}
\supp\big( (1-\alpha) \xi^k + \alpha \delta_{x^k} \big) \subset \supp \xi^k \cup \{x^k\} \subset X^{k+1}
\qquad (\alpha \in [0,1])
\end{align}
and therefore $(1-\alpha) \xi^k + \alpha \delta_{x^k} \in \Xi(X^{k+1})$ for all $\alpha \in [0,1]$. Since $\xi^{k+1}$ is an optimal design for $\Psi \circ M$ on $X^{k+1}$ by the algorithms' definitions with $\ol{\delta}_k = 0$, it follows that
\begin{align*}
\Psi(M_{k+1}) = \Psi(M(\xi^{k+1})) 
&\le \Psi\big( (1-\alpha)M(\xi^k) + \alpha m(x^k) \big) \notag\\
&= \Psi(M_k + \alpha(m_k-M_k)) 
\qquad (\alpha \in [0,1]).  
\end{align*}
So, by the definition~\eqref{eq:curvature-constant} of the curvature constant $\curvconst$ combined with~(\ref{eq:lin-convergence-rate-step1}.a), it follows that
\begin{align} \label{eq:lin-convergence-rate-step4,1}
\Psi(M_{k+1}) 
&\le \Psi(M_k) + \alpha D\Psi(M_k)(m_k-M_k) + \frac{\alpha^2}{2} \curvconst \notag \\
&= \Psi(M_k) - g_k \alpha + \curvconst \frac{\alpha^2}{2} 
\qquad (\alpha \in [0,1]).
\end{align}
And therefore we obtain
\begin{align} \label{eq:lin-convergence-rate-step4,2}
h_{k+1} 
\le \min_{\alpha \in [0,1]} \Big( h_k - g_k \alpha + \curvconst \frac{\alpha^2}{2} \Big)
= h_k - g_k \alpha_k^* + \curvconst \frac{(\alpha_k^*)^2}{2},
\end{align}
where $\alpha_k^*$ denotes a minimizer of the right-hand side of~\eqref{eq:lin-convergence-rate-step4,1} as a function of $\alpha \in [0,1]$. Since $g_k \ge h_k > 0$ by~(\ref{eq:lin-convergence-rate-step1}.b) and~(\ref{eq:lin-convergence-rate-step2}.a) and since $\curvconst \ge \convexconst > 0$ by Lemma~\ref{lm:lin-convergence-rate-lm-2}, $\alpha_k^*$ is the only minimizer and it is given by
\begin{align} \label{eq:lin-convergence-rate-step4,3}
\alpha_k^* = \min\Big\{\frac{g_k}{\curvconst}, 1 \Big\}.
\end{align} 
We now consider the two cases in~\eqref{eq:lin-convergence-rate-step4,3} separately. 
In case $g_k < \curvconst$, we conclude from~\eqref{eq:lin-convergence-rate-step4,2} with~\eqref{eq:lin-convergence-rate-step4,3} and~\eqref{eq:lin-convergence-rate-step3} that
\begin{align} \label{eq:lin-convergence-rate-step4,4}
h_{k+1} 
\le h_k - \frac{g_k^2}{\curvconst} 
= h_k - \frac{\convexconst}{\curvconst} \frac{g_k^2}{2 \convexconst} 
\le \big(1 - \convexconst / \curvconst\big) h_k.
\end{align}
In case $g_k \ge \curvconst$, we conclude from~\eqref{eq:lin-convergence-rate-step4,2} with~\eqref{eq:lin-convergence-rate-step4,3} and~(\ref{eq:lin-convergence-rate-step1}.b) that
\begin{align} \label{eq:lin-convergence-rate-step4,5}
h_{k+1}
\le h_k - g_k + \frac{\curvconst}{2}
\le h_k - \frac{g_k}{2}
\le (1-1/2) h_k.
\end{align}
So, combining~\eqref{eq:lin-convergence-rate-step4,4} and~\eqref{eq:lin-convergence-rate-step4,4}, we immediately obtain the desired estimate~\eqref{eq:lin-convergence-rate-step4}. And from~\eqref{eq:lin-convergence-rate-step4}, in turn, the linear convergence rate estimate~\eqref{eq:lin-convergence-rate-estimate} immediately follows by induction over $K$, while the relations~\eqref{eq:lin-convergence-rate} immediately follow by Lemma~\ref{lm:lin-convergence-rate-lm-2}. 
\end{proof}

\begin{cor} \label{cor:numbers-of-iterations-until-termination-under-linear-assumptions}
Suppose the assumptions of the above theorem are satisfied with a strictly positive tolerance $\eps > 0$. Algorithms~\ref{algo:no-exchange} and~\ref{algo:exchange} then terminate at the very latest in iteration 
\begin{align}
k^* := \min \big\{ k \in \N_0: r^k \big(\Psi(M(\xi^0)) - \ul{\Psi}^*\big) \le \eps \big\},
\end{align}
where $\ul{\Psi}^*$ is an arbitrary lower bound for $\Psi^* := \min_{\xi \in \Xi(X)} \Psi(M(\xi))$. 
\end{cor}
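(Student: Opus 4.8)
The plan is to argue by contradiction, in exact parallel to the proof of Corollary~\ref{cor:numbers-of-iterations-until-termination-under-sublinear-assumptions}, the only change being that the linear convergence rate estimate~\eqref{eq:lin-convergence-rate-estimate} of Theorem~\ref{thm:lin-convergence-rate} now plays the role of the sublinear estimate. As a preliminary step I would record that $k^*$ is a well-defined element of $\N_0$: since $r \in [1/2,1)$ by~\eqref{eq:lin-convergence-rate} we have $r^k \searrow 0$ as $k \to \infty$, since $\Psi(M(\xi^0)) < \infty$ (because $\xi^0 \in \Xifin(X)$ by Lemma~\ref{lm:simple-algo-well-defined} or Lemma~\ref{lm:exchange-algo-well-defined}), and since $\eps > 0$ by assumption, the set $\{k \in \N_0 : r^k(\Psi(M(\xi^0)) - \ul{\Psi}^*) \le \eps\}$ is non-empty and hence has a minimum $k^*$.

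Next I would assume, towards a contradiction, that the algorithm runs at least until iteration $k^*+1$; in particular $k^* \in K$ and the termination test failed at iteration $k^*$, so by the termination rule of Algorithm~\ref{algo:no-exchange} or~\ref{algo:exchange} together with $\ul{\delta}_{k^*} = 0$ (as required in the hypotheses of Theorem~\ref{thm:lin-convergence-rate}) one gets $\psi(M(\xi^{k^*}),x^{k^*}) < -\eps$. On the other hand, because $k^* \in K$, estimate~\eqref{eq:lin-convergence-rate-estimate} applies at $k = k^*$ and gives
\begin{align*}
\Psi(M(\xi^{k^*})) - \Psi^* \le r^{k^*}\big(\Psi(M(\xi^0)) - \Psi^*\big) \le r^{k^*}\big(\Psi(M(\xi^0)) - \ul{\Psi}^*\big) \le \eps,
\end{align*}
where the middle inequality uses $\ul{\Psi}^* \le \Psi^*$ together with $r^{k^*} > 0$, and the last one is the defining inequality of $k^*$. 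Hence $\xi^{k^*}$ is an $\eps$-optimal design for $\Psi \circ M$, so Theorem~\ref{thm:equivalence-thm} yields $\psi(M(\xi^{k^*}),x) \ge -\eps$ for every $x \in X$, in particular $\psi(M(\xi^{k^*}),x^{k^*}) \ge -\eps$. This contradicts the strict inequality obtained above, and the contradiction establishes that the algorithm terminates no later than iteration $k^*$.

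I do not anticipate a genuine obstacle here: the argument is essentially a one-to-one transcription of the sublinear corollary's proof with the geometric factor $r^{k^*}$ in place of $2/(k^*+2)$. The only points deserving a moment's care are the index bookkeeping (that "runs past iteration $k^*$" forces $k^* \in K$ and that the termination test consequently failed at that iteration), the boundary case $k^* = 0$ (handled verbatim, using $\Psi(M(\xi^0)) - \Psi^* \le \Psi(M(\xi^0)) - \ul{\Psi}^* \le \eps$), and checking that every hypothesis of Theorem~\ref{thm:lin-convergence-rate} — finiteness of the set of one-point information matrices, $\mu$-strong convexity and $L$-smoothness of the restricted design criterion, and $\ol{\delta}_k = \ul{\delta}_k = 0$ — is inherited verbatim from the standing assumptions of this corollary.
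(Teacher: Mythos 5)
Your proof is correct and follows exactly the route the paper intends: the paper's own proof is literally "completely analogous to the proof of Corollary~\ref{cor:numbers-of-iterations-until-termination-under-sublinear-assumptions}," and your argument is precisely that adaptation, with the linear estimate~\eqref{eq:lin-convergence-rate-estimate} replacing the sublinear one and the $\ul{\delta}$ corrections dropping out since $\ul{\delta}_k = 0$. The added remarks on the well-definedness of $k^*$ and the boundary case $k^*=0$ are harmless and accurate.
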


\begin{proof}
Completely analogous to the proof of Corollary~\ref{cor:numbers-of-iterations-until-termination-under-sublinear-assumptions}.
\end{proof}

\begin{ex}
Suppose that $X$, $m$, $M$ are as in Condition~\ref{cond:X-and-M} and that, in addition, the design space $X$ is finite but such that $\mathcal{A} := \{m(x): x \in X\}$ is not a singleton. 
Suppose further that the two-stage design criterion $\Psi: \Rpsd^{d \times d} \to \R$ is defined by
\begin{align}
\Psi(M) := \tilde{\Psi}_p(\alpha M^0 + (1-\alpha)M) \qquad (M \in \Rpsd^{d \times d})
\end{align}
for some $\alpha \in (0,1)$ and $M^0 \in \Rpd^{d \times d}$ and for some $p \in \N_0$, see~\eqref{eq:tilde-Psi_p-def} and~\eqref{eq:two-stage-design-criterion}. In particular, we recall that this covers the D-criterion $\Psi_0 = \tilde{\Psi}_0$ and the A-criterion $\Psi_1 = \tilde{\Psi}_1$. If the sequence $(\xi^k)_{k\in K}$ is then generated by Algorithm~\ref{algo:no-exchange} or~\ref{algo:exchange} with tolarences $\eps \in [0,\infty)$ and $\ol{\delta}_k = 0 = \ul{\delta}_k $, then all assumptions of our linear convergence rate theorem are satisfied and therefore the error estimate~\eqref{eq:lin-convergence-rate-estimate} holds true. 
(Indeed, the assumption~\eqref{eq:M-subset-of-dom-Psi,linear-convergence-rate} is trivially satisfied because $\dom \Psi = \Rpsd^{d \times d}$ by the assumed positive definiteness of $M^0$. Additionally, Conditions~\ref{cond:shared-assumption-1-algos-with-and-without-exchange} and~\ref{cond:shared-assumption-3-algos-with-and-without-exchange} and the $L$-smoothness of $\Psi|_{\mathcal{M}}$ follow in a completely analogous way as in Example~\ref{ex:sublin-convergence-result,two-stage-design-criterion}. And finally, the $\mu$-strong convexity of $\Psi|_{\mathcal{M}}$ follows by Corollary~\ref{cor:strongly-convex-design-criteria}, using that $\mathcal{C} := \{\alpha M^0 + (1-\alpha)M: M \in \mathcal{M} \}$ is a compact subset of $\Rpd^{d \times d}$.) 
\end{ex}

\section{Application examples}

In this section, we apply the proposed adaptive discretization algorithms to two experimental design problems from chemical engineering. In both application examples, the underlying prediction model $f$ is implicitly defined, namely by nonlinear algebraic equations in the first example and by nonlinear differential equations in the second example. We have implemented all our algorithms in Python and we apply them both with the A- and with the D-criterion $\Psi$. As in~\cite{YaBiTa13} and in all other practically feasible optimal experimental design implementations we are aware of, 
we choose the design space $X$ to be a large but finite set. As a consequence, we can -- and do -- solve the occurring violator problems~\eqref{eq:Viol(xi^k)-intro} by enumeration. In order to solve the convex discretized design problems~\eqref{eq:OED(X^k)-intro}, in turn, we use the sequential quadratic programming solver~\cite{Sc11, Sc14}.

\subsection{An application with a stationary model}

In our first application example, we consider the chlorination of benzene in a continuously fed and stirred tank reactor. As explained in~\cite{StBa15}, this process can be described by algebraic system equations of the form
\begin{align} \label{eq:stationary-system-equation}
g(s,x,\theta) = 0,
\end{align}
where the map $g: \mathcal{S} \times \X \times \Theta \to \R^4$ is defined as 
\begin{align}
g_1(s,x,\theta) &:= F_f - s_1 s_4 - \theta_1 \frac{s_1 V}{d(s)}\\
g_2(s,x,\theta) &:= - s_2 s_4 + \theta_1 \frac{s_1 V}{d(s)} - \theta_2 \frac{s_2 V}{d(s)}\\
g_3(s,x,\theta) &:= - s_3 s_4 + \theta_2 \frac{s_2 V}{d(s)}\\
g_4(s,x,\theta) &:= s_1 + s_2 + s_3 - 1
\end{align}
and where $d(s) := v_A s_1 + v_B s_2 + v_C s_3$ with $v_A, v_B, v_C$ being the molar volumes of $A, B, C$ from~\cite{StBa15}. In these equations, $s = (s_1, \dots, s_4) := (l_A, l_B, l_C, F_o) \in \mathcal{S} := [0,1]^3 \times [60,70]$ describes the state of the system through the molar fractions $l_A, l_B, l_C$ of the components $A, B, C$ in the tank and through the total molar outlet stream $F_o$ from the tank. Additionally, the relevant input quantities 
\begin{align}
x := (F_f, V) \in \X := [60, 70] \times [10, 20]
\end{align}
of the system are given by the total molar feed stream $F_f$ into the tank and by the mixture's volume $V$ in the tank. And finally, the model parameters $\theta := (k_1, k_2) \in \Theta$ are the reaction rate constants for the two kinetic reactions taking place in the tank reactor.
\smallskip

What we are interested in here is to predict the molar composition in the tank reactor as a function of the aforementioned input quantities $x$ and model parameters $\theta$. So, the output quantity we are interested in here is $y = h(s) := (s_1, s_2, s_3) \in \Y := \R^3$ and the prediction model $f: \X \times \Theta \to \Y$ of interest is given by
\begin{align} \label{eq:stationary-model}
f(x,\theta) := h(s(x,\theta)),
\end{align}
where $s(x,\theta)$ for every given $(x,\theta) \in \X \times \Theta$ is the solution to the system  equation~\eqref{eq:stationary-system-equation}. 
%
As the design space $X$, we choose the finite subset 
\begin{align} \label{eq:design-space-stationary-example}
X := \{60, 60.1, 60.2, \dots, 70\} \times \{10, 10.1, 10.2, \dots, 20\}
\end{align}
of 
$\X$ consisting of $10\, 201$ design points (candidate experiments). And as the reference parameter value $\ol{\theta}$, we choose the value $\ol{\theta} := (0.4, 0.0555)$ from~\cite{StBa15}.  
\smallskip

After having specified the prediction model $f$ along with the design space $X$ and the reference parameter $\ol{\theta}$, we can now turn to solving the corresponding locally optimal design problem~\eqref{eq:OED(X)-intro}. In order to do so, we use our strict algorithms with and without exchange. Specifically, we apply Algorithm~\ref{algo:no-exchange} and Algorithm~\ref{algo:exchange} with the initial discretization
\begin{align}
X^0 := \{(60, 10),\, (60, 20),\, (70, 10),\, (70, 10)\}
\end{align}
consisting of the $4$ corner points of the design space $X$, and with optimality tolerances $\eps := 10^{-4}$ and $\ol{\delta}_k = 0 = \ul{\delta}_k$. 
\smallskip

With these settings, our Algorithms~\ref{algo:no-exchange} and~\ref{algo:exchange} terminate already after $1$ iteration with an $\eps$-optimal design $\xi^*$, namely the point measure
\begin{align} \label{eq:optimal-design-stationary-example}
\xi^* := \delta_{x^*} \qquad \text{with} \qquad x^* := (60, 20).
\end{align}
As is to be expected, the objective value $\Psi(\xi^*)$ at the thus computed $\eps$-optimal design is significantly better than the objective values $\Psi(\xi^{\fact})$ and $\Psi(\xi^{\rand})$ at the factorial design and the random design
\begin{align} \label{eq:factorial-and-random-design-stationary-example}
\xi^{\fact} = \frac{1}{4} \sum_{x \in X^{\fact}} \delta_x
\qquad \text{and} \qquad
\xi^{\rand} = \frac{1}{4} \sum_{x \in X^{\rand}} \delta_x
\end{align}
with $X^{\fact} := X^0$ and $X^{\rand}:= \{ (62.5, 12.7),\, (66.2, 15.1),\, (68.9, 18.5),\, (63.2, 16.4)\}$, respectively. See Table~\ref{tab:stationary-example}. 
Additionally, our algorithms need substantially fewer iterations and computation time than the vertex-direction algorithm from~\cite{Fe, Wy70}. And moreover, they achieve a better objective function value at termination and terminate at a design with fewer support points. See Table~\ref{tab:stationary-example-algorithm-comparison}.

\begin{table}
\begin{center}	
	\caption{Values of the A-criterion and the D-criterion at the different designs $\xi^*$, $\xi^\fact$, $\xi^\rand$ from~\eqref{eq:optimal-design-stationary-example} and~\eqref{eq:factorial-and-random-design-stationary-example}.} 
	\label{tab:stationary-example}
	\begin{tabular}{l c c}
		\hline 
		& A-criterion $\Psi_1$ & D-criterion $\Psi_0$ \\
		\hline
		$\Psi(\xi^*)$ & $2.2630$ & $-0.6873$ \\
		$\Psi(\xi^{\fact})$ & $2.4974$ & $-0.0381$ \\
		$\Psi(\xi^{\rand})$ & $2.5178$ & $-0.0085$ \\
		\hline
	\end{tabular}
\end{center}
\end{table}

\begin{table}
\begin{center}
	\caption{Comparison of our algorithms with exchange and without exchange with the vertex-direction algorithm for the stationary example with the D-criterion $\Psi = \Psi_0$.}
	\label{tab:stationary-example-algorithm-comparison}
	\begin{tabular}{c c c c}
		\hline 
		& With exchange & Without exchange & Vertex-direction \\
		\hline
		Criterion value $\Psi(\xi^*)$ & $-0.6873$ & $-0.6873$ & $-0.6863$ \\
		Iterations  & $1$ & $1$ & $450$\\
		Approximation error bound  & $0.0$ & $0.0$ & $0.0010$ \\
		Support points  & $1$ & $1$ & $4$ \\
		Computation time (in s) & $0.5397$ & $0.5070$ & $36.39$ \\
		\hline
	\end{tabular}
\end{center}
\end{table}


\subsection{An application with a dynamic model}

In our second application example, we consider the Williams-Otto process which connects a continuously stirred tank reactor with a decanter and a distillation column. As explained in~\cite{ScTe20}, this process can be described by differential system equations of the form
\begin{align}\label{eq:dynamic-system-equation}
\dot{s}(t) = g(s(t),x,\theta) \qquad \text{and} \qquad s(0) = s_0,
\end{align}
where $g: \mathcal{S} \times X \times \Theta \to \R^6$ is defined as 
\begin{align}
g_1(s,x,\theta) &:= F_{fA} + \big((1-\eta)\mu - \mu\big) \frac{s_1}{d(s)} - k_{10}(T,\theta)\frac{s_1 s_2}{d(s)}\\
g_2(s,x,\theta) &:= F_{fB} + \big((1-\eta)\mu - \mu\big) \frac{s_2}{d(s)} - k_{10}(T,\theta)\frac{s_1 s_2}{d(s)} - k_{20}(T,\theta)\frac{s_2 s_3}{d(s)}\\
g_3(s,x,\theta) &:= \big((1-\eta)\mu - \mu\big) \frac{s_3}{d(s)} + 2 k_{10}(T,\theta)\frac{s_1 s_2}{d(s)} - 2 k_{20}(T,\theta)\frac{s_2 s_3}{d(s)} - k_{30}(T,\theta) \frac{s_3 s_5}{d(s)}\\
g_4(s,x,\theta) &:= \big((1-\eta)\mu - \mu\big) \frac{s_4}{d(s)} + 2 k_{20}(T,\theta)\frac{s_2 s_3}{d(s)}\\
g_5(s,x,\theta) &:= 0.1 (1-\eta)\mu \frac{s_4}{d(s)} - \mu \frac{s_5}{d(s)} + k_{20}(T,\theta)\frac{s_2 s_3}{d(s)} - 0.5 k_{30}(T,\theta) \frac{s_3 s_5}{d(s)}\\
g_6(s,x,\theta) &:= - \mu \frac{s_6}{d(s)} + 1.5 k_{30}(T,\theta) \frac{s_3 s_5}{d(s)}
\end{align}
and where $d(s) := s_1 + \dotsb + s_6$ and $k_{i0}(T, \theta) := a_i \exp(-b_i/T)$ for $i \in \{1,2,3\}$. In these equations, $s = (s_1,\dots, s_6) := (m_A, m_B, m_C, m_P, m_E, m_G) \in \mathcal{S} := [0,\infty)^6$ describes the state of the system through the masses $m_A, \dots, m_G$ of the components $A, \dots, G$ in the tank. We set the initial masses to
\begin{align}
s_0 = (m^0_A, \ldots, m^0_G) = (10, 1, 0, 0, 0, 0).
\end{align} 
and the values $F_{fA} = 10,\, \mu=129.5$ and $\eta=0.2$.  Additionally, the relevant input quantities 
\begin{align}
x := (F_{fB}, T, t_{meas}) \in \X := [20, 23] \times [650, 660] \times [1, 20]
\end{align}
of the system are given by the mass feed stream $F_{fB}$ of component $B$ into the tank, by the temperature $T$ in the tank, and by the measurement time $t_{meas}$. And finally, the model parameters $\theta := (a_1, b_1, a_2, b_2, a_3, b_3) \in \Theta$ are the parameters of the reaction rate constants $k_{10}, k_{20}, k_{30}$ for the three kinetic reactions taking place in the tank reactor.
\smallskip

What we are interested in here is to predict the masses $m_P, m_E, m_G$ of the target, side, and waste products in the tank reactor at time $t_{meas}$ as a function of the aforementioned input quantities $x$ and model parameters $\theta$. So, the output quantity we are interested in here is $y = h(s) := (s_4, s_5, s_6) \in \Y := \R^3$ and the prediction model $f: \X \times \Theta \to \Y$ of interest is given by
\begin{align} \label{eq:dynamic-model}
f(x,\theta) := h(s(t_{meas}, s_0, x,\theta)),
\end{align}
where $s(\cdot, s_0, x,\theta): [0,20] \to \R^6$ for every given initial state $s_0$ and every $(x,\theta) \in \X \times \Theta$ is the 
solution to the system equations~\eqref{eq:dynamic-system-equation}. 
%
As the design space $X$, we choose the finite subset 
of 
$\X$ consisting of $808\,020$ design points (candidate experiments) obtained by discretizing the ranges $[20, 23]$ and $[650, 660]$ with $201$ equidistant points each, and the time range $[1, 20]$ with $20$ equidistant points. And as the reference parameter value $\ol{\theta}$, we choose the value $\theta := (a_1, b_1, a_2, b_2, a_3, b_3)$ from (20) in~\cite{ScTe20}.  
\smallskip

After having specified the prediction model $f$ along with the design space $X$ and the reference parameter $\ol{\theta}$, we can now turn to solving the corresponding locally optimal design problem~\eqref{eq:OED(X)-intro}. In order to do so, we use our strict algorithms with and without exchange. Specifically, we apply Algorithm~\ref{algo:no-exchange} and Algorithm~\ref{algo:exchange} with the initial discretization
\begin{align}
X^0 := \{20, 23\} \times \{550, 600\} \times \{1, 2, \dots, 20\} 
\end{align}
consisting of $80$ points, and with optimality tolerances $\eps := 10^{-4}$ and $\ol{\delta}_k = 0 = \ul{\delta}_k$. 
\smallskip

With these settings, our Algorithms~\ref{algo:no-exchange} and~\ref{algo:exchange} terminate already after $3$ iterations with the $\eps$-optimal designs $\xi^*$ specified in Table~\ref{tab:optimal-design-dynamic-model}. 
\begin{table}
\begin{center}
	\caption{Approxmiately optimal designs $\xi^*$ for the A- and D-criterion computed by the algorithm with exchange.}\label{tab:optimal-design-dynamic-model}
	\begin{tabular}{p{3cm} p{2cm} p{0.5cm} p{3cm} p{2cm}}
		\hline 
		A-criterion &  & & D-criterion &  \\
		Support points $x_i$ & Weights $w_i$ & & Support points $x_i$ & Weights $w_i$ \\
		\hline
		$(22.925, 592.5, 16)$ & $0.12$ & & $(22.925, 592.5, 16)$ & $0.12$\\
		$(22.925, 592.5, 16)$ & $0.12$ & & $(22.925, 592.5, 16)$ & $0.12$\\
		$(22.925, 592.5, 16)$ & $0.12$ & & $(22.925, 592.5, 16)$ & $0.12$\\
		$(22.925, 592.5, 16)$ & $0.12$ & & $(22.925, 592.5, 16)$ & $0.12$\\
		$(22.925, 592.5, 16)$ & $0.12$ & & $(22.925, 592.5, 16)$ & $0.12$\\
		$(22.925, 592.5, 16)$ & $0.12$ & & $(22.925, 592.5, 16)$ & $0.12$\\
		$(22.925, 592.5, 16)$ & $0.12$ & & $(22.925, 592.5, 16)$ & $0.12$\\
		\hline
	\end{tabular}
\end{center}
\end{table}
As is to be expected, the objective value $\Psi(\xi^*)$ at the thus computed $\eps$-optimal design is significantly better than the objective values $\Psi(\xi^{\fact})$ and $\Psi(\xi^{\rand})$ at the factorial design and the random design
\begin{align} \label{eq:factorial-and-random-design-dynamic-example}
\xi^{\fact} = \frac{1}{80} \sum_{x \in X^{\fact}} \delta_x
\qquad \text{and} \qquad
\xi^{\rand} = \frac{1}{8} \sum_{x \in X^{\rand}} \delta_x
\end{align}
with $X^{\fact} := X^0$ and with
\begin{align*}
X^{\rand} := \{ 
&(22.925, 592.5, 16),\, (20.36, 553.75, 16),\, (20.96, 550.75, 15), (22.22, 575.5, 6), \\ 
&(22.625, 565.25, 12), \, (20.675, 563.5, 12), (20.705, 552.0, 8), \, (20.075, 582.0, 14) \}, 
\end{align*}
respectively. See Table~\ref{tab:dynamic-example}. 
Additionally, our algorithms need substantially fewer iterations and computation time than the vertex-direction algorithm from~\cite{Fe, Wy70}. And moreover, they achieve a better objective function value. See Table~\ref{tab:dynamic-example-algorithm-comparison}.

\begin{table}
\begin{center}	
	\caption{Values of the A-criterion and the D-criterion at the different designs $\xi^*$, $\xi^\fact$, $\xi^\rand$ from Table~\ref{tab:optimal-design-dynamic-model} and~\eqref{eq:factorial-and-random-design-dynamic-example}.} 
	\label{tab:dynamic-example}
	\begin{tabular}{l c c}
		\hline 
		& A-criterion $\Psi_1$ & D-criterion $\Psi_0$ \\
		\hline
		$\Psi(\xi^*)$ & $\phantom{0}94.08$ & $\phantom{12}0.4213\cdot 10^{9}$ \\
		$\Psi(\xi^{\fact})$ & $105.20$ & $\phantom{12}4.9159\cdot 10^{9}$ \\
		$\Psi(\xi^{\rand})$ & $108.90$ & $123.7325 \cdot 10^{9}$ \\
		\hline
	\end{tabular}
\end{center}
\end{table}

\begin{table}
\begin{center}
	\caption{Comparison of our algorithms with exchange and without exchange with the vertex-direction algorithm for the dynamic example with the A-criterion $\Psi = \Psi_1$.}
	\label{tab:dynamic-example-algorithm-comparison}
	\begin{tabular}{c c c c}
		\hline 
		& With exchange & Without exchange & Vertex-direction \\
		\hline
		Criterion value $\Psi(\xi^*)$ & $94.0759$ & $94.0759$ & $-0.6863$ \\
		Iterations  & $3$ & $3$ & $450$\\
		Approximation error bound  & $0.0010$ & $0.0010$ & $0.0010$ \\
		Support points  & $7$ & $7$ & $4$ \\
		Computation time (in s) & $14.8141$ & $32.9620$ & $36.39$ \\
		\hline
	\end{tabular}
\end{center}
\end{table}

\section*{Acknowledgments}

We gratefully acknowledge funding from the Deutsche Forschungsgemeinschaft (DFG, German Research Foundation) – project number 466397921 – within the Priority Programme ``SPP 2331: Machine Learning in Chemical Engineering'' and within the research unit ``FOR 5359: Deep Learning on Sparse Chemical Process Data''.

\begin{small}

\end{small}

\end{document}